\def \Id{\operatorname{Id}}
\def \Ext{\operatorname{Ext}}
\def \id{\operatorname{Id}}
\def \ker{\operatorname{Ker}}
\def \im{\operatorname{im}}
\def \io{\operatorname{io}}
\def \D{\Delta}
\numberwithin{equation}{section}
\newtheorem{theorem}{Theorem}[section]
\newtheorem{lemma}[theorem]{Lemma}
\newtheorem{proposition}[theorem]{Proposition}
\newtheorem{corollary}[theorem]{Corollary}
\newtheorem{definition}[theorem]{Definition}
\newtheorem{remark}[theorem]{Remark}
\begin{document}
\title{Classification of affine prime regular Hopf algebras of GK-dimension one}\thanks{$^\dag$Supported by NSFC 11371186, 11371187, 11571164.}

\subjclass[2010]{16E65, 16T05(primary), 16P40, 16S34(scendary)}
\keywords{GK-dimension one, Hopf algebra, Homological integral.}

\author{Jinyong Wu}
\address{Department of Mathematics, Nanjing University, Nanjing 210093,
China}
\address{Yiwu Industrial and Commercial College, Yiwu,
Zhejiang 322000, China} \email{wujy1753@126.com}

\author{Gongxiang Liu}
\address{Department of Mathematics, Nanjing University, Nanjing 210093, China} \email{gxliu@nju.edu.cn}

\author{Nanqing Ding}
\address{Department of Mathematics, Nanjing University, Nanjing 210093, China} \email{nqding@nju.edu.cn}

\date{}
\maketitle

\begin{abstract}
The classification of affine prime regular Hopf algebras of GK-dimension one
is completed. As consequences, 1) we give a negative answer to Question 7.1 posed
in \cite{BZ} and 2) we show that there do exist prime regular Hopf algebras of GK-dimension one
which are not pointed.
\end{abstract}

\section{Introduction}
Throughout this paper, $k$ denotes an algebraically closed field
of characteristic $0$, all vector spaces are over $k$, and all Hopf
algebras are affine and noetherian.

In recent years, Hopf algebras of finite Gelfand-Kirillov dimension
(GK-dimension for short) have been studied intensively \cite{AA, AS,
BZ, GZ, Liu, LWZ, WZZ1, WZZ2, WZZ3}. In particular, the
classification of Hopf algebras of low GK-dimension was carried over
and got substantial progress. Lu, Wu and Zhang initiated the the
program of classifying Hopf algebras of GK-dimension one \cite{LWZ}.
Brown and Zhang \cite{BZ} made further efforts in this direction and
classified all prime regular Hopf algebras $H$ of GK-dimension one
under the hypothesis:
 $$\im(H)=1\;\;\;\;\;\text{or}
\;\;\;\;\;\im(H)=\io(H)\;\;\;\;\;\;\;\;\;(\ast)$$ (see Section $2$
for the definition of $\im(H)$ and $\io(H)$).

For Hopf algebras $H$ of GK-dimension two, all known classification
results are given under the condition of $H$ being domains. In
\cite{GZ}, Goodearl and Zhang classified all Hopf algebras $H$ of
GK-dimension two which are domains and satisfy the condition
$\Ext^{1}_{H}(k, k)\neq 0$. For those with vanishing Ext-groups, some
interesting examples were constructed by Wang-Zhang-Zhuang
\cite{WZZ2} and they conjectured these examples together with Hopf
algebras given in \cite{GZ} exhausted all Hopf algebra domains with
GK-dimension two. In order to study Hopf algebras $H$ of
GK-dimensions three and four, a more restrictive but natural
condition was added: $H$ is connected, that is, the coradical of $H$
is $1$-dimensional.  All connected Hopf algebras with GK-dimension
three were classified by Zhuang in \cite{Zh}. Recently, Wang, Zhang
and Zhuang \cite{WZZ3} completed the classification of connected
Hopf algebras of GK-dimension four.

In another direction, the pointed Hopf algebra domains of finite
GK-dimension with generic infinitesimal braiding have been
classified by  Andruskiewitsch and Schneider \cite{AS} and
Andruskiewitsch and Angiono \cite{AA}.

Philosophically, prime regular Hopf algebras of finite
GK-dimension can be regarded as ``noncommutative" counterparts of
connected algebraic groups. It is well-known that there are only two connected algebraic groups of
dimension one. This fact makes us believe that
 there should be a complete classification of prime regular Hopf algebras
 of GK-dimension one. In this paper, we will go on with Brown-Zhang's works \cite{BZ} and
finish the classification of all
prime regular Hopf algebras of GK-dimension one.

As the first step,
we construct a new class of prime regular Hopf algebras $D(m,d,\xi)$ of
GK-dimension one,
which implies that
the condition $(*)$ is really necessary for Brown-Zhang's
classification.
In fact, Brown and Zhang \cite{BZ} gave four classes of Hopf
algebras of GK-dimension one: the coordinate algebras of connected
algebraic groups of dimension one, the infinite dihedral group algebra, infinite
dimensional Taft algebras and generalized Liu algebras, and proved
that all prime regular Hopf algebras $H$ of GK-dimension one
satisfying condition $(*)$ belong to these four classes. Naturally,
they raised the following open question (Question 7.1 in \cite{BZ}):

(\textbf{Q}) \emph{Does their result still hold without the hypothesis $(*)$}?\\[1.5mm]
Our new examples in Section 4 of this paper provide a negative answer to this
question.

Secondly, we will prove our main result (see Theorem \ref{t8.3}) which states that  our new examples
 together
with the four classes of Hopf algebras given in \cite{BZ} form a complete list,
up to isomorphisms of Hopf algebras,
of all prime regular Hopf algebras
of GK-dimension one. The key idea to prove the main result is not complicated:
let $H$ be a
prime regular Hopf algebra of GK-dimension one which doesn't
satisfy the condition $(*)$. From this Hopf algebra $H$, we can
construct a Hopf subalgebra $\widetilde{H}$ which will be shown to
meet the condition $(*)$. Thus the classification result given in
\cite{BZ} can be applied. At last, we show that $\widetilde{H}$
determines the structure of $H$ entirely.

The precess to realize our idea, which motivates our discovery of new examples,
 turns out  to be much more complicated than our expectation:
According to Brown-Zhang's classification result, there is a dichotomy on $\widetilde{H}$:
$\widetilde{H}$ is either primitive or group-like (see Definition 6.1).  When $\widetilde{H}$ is
primitive, we find that $H$ must be an infinite dimensional Taft algebra. The difficult part is group-like
case. In this case, we gradually realize that there is an essential difference
 between the situation $\frac{\io(H)}{\im{(H)}}>2$ and the situation $\frac{\io(H)}{\im{(H)}}=2$.
 The last situation becomes very delicate: More generators and subtle relations are allowed to appear.
 Ultimately, this leads us to find the final missing piece in the puzzle of prime regular
 Hopf algebras of GK-dimension one.

In practice, the assumption ``pointed" is always added  when we want to
classify Hopf algebras of lower GK-dimensions.
As a matter of fact, all known examples are pointed
and it is widely believed that, at least for prime regular Hopf algerbas of GK-dimension one, these
Hopf algebras should be pointed automatically. Our new examples will change this naive understanding
since all the new examples are not pointed!

The paper is organized as follows. Necessary definitions,
known examples and preliminary results are collected in Section 2.
Some combinatorial relations, which are
crucial to the following analysis, will be given in Section 3. Section 4 is
 devoted to constructing the new examples $D(m,d,\xi)$ of prime regular
Hopf algebras of GK-dimension one. The proof of $D(m,d,\xi)$ being a
Hopf algebra is nontrivial and the combinatorial
equations given in Section 3 are used extensively in the proof. In Section 5,
the definition of the Hopf subalgebra $\widetilde{H}$ and some basic properties of
$\widetilde{H}$ are given.
In particular, we show that  $\widetilde{H}$ is a prime regular Hopf algebra satisfying the condition
$(*)$. Thus $\widetilde{H}$ is either primitive or group-like. By the results of the last three sections, we can reconstruct $H$ from $\widetilde{H}$. In details, the general relations between $\widetilde{H}$
and $H$ are built in Section 6. Section 7 is designed to analyse the primitive case and
$H$ is shown to be an infinite dimensional Taft algebra. We consider the group-like case in the last section, and
as the desired conclusion, we show that $H$ is isomorphic to some $D(m,d,\xi)$. Finally, the main result, that is, the classification result, and its proof are also formulated in this section.

\section{Preliminaries}

In this section we recall the urgent needs around affine noetherian
Hopf algebras for completeness and the convenience of the reader.
About general background knowledge, the reader is referred to
\cite{Mo} for Hopf algebras, \cite{MR} for noetherian rings and
\cite{Br,LWZ,BZ} for exposition about noetherian Hopf algebras.

Usually we are working on left modules. Let $A^{op}$ denote the
opposite algebra of $A$. Throughout, we use the symbols $\Delta,
\epsilon$ and $S$ respectively, for the coproduct, counit and
antipode of a Hopf algebra $H$, and the Sweedler's notation for
coproduct $\D(h)=\sum h_1\otimes h_2\;(h\in H)$ will be used freely.

 \subsection{}\textbf{Stuffs from ring theory.}

In this paper, a ring $R$ is called \emph{regular} if it has finite global dimension,
and it is \emph{prime} if $0$ is a prime ideal.  \\[2mm]
{$\bullet$ \emph{PI-degree}.}  If $Z$ is an Ore domain, then the
{\it rank } of a $Z$-module $M$ is defined to be the
$Q(Z)$-dimension of $Q(Z)\otimes_Z M$, where $Q(Z)$ is the quotient
division ring of $Z$.  Let $R$ be an algebra satisfying a polynomial
identity (PI for short). The PI-degree of $R$ is defined to be
$$\text{PI-deg}(R)=\text{min}\{n|R\hookrightarrow M_n(C)\ \text{for some commutative ring}\ C\}$$
(see \cite[Chapter 13]{MR}). If $R$ is a prime PI ring with center
$Z$, then the PI-degree of $R$ equals the square root of the rank of
$R$ over $Z$.\\[2mm]
{$\bullet$\emph{ Artin-Schelter condition}.}  Recall that an algebra $A$
is said to be \emph{augmented} if there is an algebra morphism
$\epsilon:\; A\to k$. Let $(A,\epsilon)$ be an augmented noetherian
algebra. Then
$A$ is \emph{Artin-Schelter Gorenstein}, we usually abbreviate to \emph{AS-Gorenstein}, if \\[1.5mm]
\indent (AS1) injdim$_AA=d<\infty$,\\
\indent (AS2) dim$_k\Ext_A^d(_Ak, \;_AA)=1$ and
dim$_k\Ext_A^i(_Ak,\;
_AA)=0$ for all $i\neq d$,\\
\indent (AS3) the right $A$-module versions of (AS1, AS2) hold.

The following result is the combination of \cite[Theorem 0.1]{WZ} and  \cite[Theorem 0.2  (1)]{WZ},
which shows that a large number of Hopf algebras are AS-Gorenstein.
\begin{lemma}\label{l2.1} Each affine noetherian  PI Hopf algebra is AS-Gorenstein.
\end{lemma}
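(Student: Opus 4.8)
This statement bundles two theorems of Wu--Zhang, so the task is to verify the three Artin--Schelter conditions for an arbitrary affine noetherian PI Hopf algebra $H$. My plan is to route everything through the theory of rigid dualizing complexes and reduce the whole assertion to a single clean claim: that the dualizing complex of $H$ is \emph{invertible}. Concretely, a noetherian algebra carrying an Auslander dualizing complex $R$ over $H\otimes H^{op}$ is AS-Gorenstein once $R$ is isomorphic to a shift $H^{\mu}[d]$ of a single twisted bimodule, for an algebra automorphism $\mu$ and an integer $d$. Invertibility of $R$ delivers simultaneously that $\operatorname{injdim}$ is finite and equal to $d$ on each side (conditions (AS1) and the symmetric half of (AS3)) and that the only surviving $\Ext$-group against the trivial module is one-dimensional (conditions (AS2) and its right-hand version). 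The unifying technical tool is the fundamental theorem of Hopf modules, which untwists tensor products of the shape $V\otimes H$ into free modules.

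First I would invoke the results of Yekutieli--Zhang: since $H$ is affine, noetherian and PI, it possesses an Auslander, rigid dualizing complex $R$. Existence of $R$ alone does \emph{not} give finite injective dimension; that is exactly the invertibility I must still prove. The mechanism I would exploit is that the coproduct $\Delta$ and the bijective antipode $S$ make $H$ ``homologically homogeneous'': the winding automorphisms attached to characters of $H$, together with the untwisting $V\otimes {}_{H}H\cong {}_{H}H\otimes_{k}V_{0}$ (where $V_{0}$ is the underlying vector space of a finite-dimensional module $V$), show that the canonical/dualizing module is a rank-one reflexive $H$-bimodule which is locally free of rank one everywhere. A reflexive rank-one bimodule with this uniformity is invertible, which collapses $R$ to the shape $H^{\mu}[d]$ and forces $R$ to be concentrated in a single cohomological degree.

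With $\operatorname{injdim}H=d<\infty$ in hand, I would make (AS2) explicit by computing $\Ext^{i}_{H}({}_{H}k,{}_{H}H)$ directly. For any finite-dimensional left $H$-module $V$ the diagonal action turns $V\otimes H$ into a Hopf module, so the fundamental theorem of Hopf modules gives a free left module $V\otimes {}_{H}H\cong {}_{H}H\otimes_{k}V_{0}$ of rank $\dim V$; applying $\mathrm{RHom}_{H}({}_{H}k,-)$ then yields $\mathrm{RHom}_{H}({}_{H}k,V\otimes H)\cong \mathrm{RHom}_{H}({}_{H}k,{}_{H}H)\otimes_{k}V_{0}$. Playing this identity against the translation functors supplied by $\Delta$ shows $\mathrm{RHom}_{H}({}_{H}k,{}_{H}H)$ is concentrated in degree $d$, and the resulting integral module $\Ext^{d}_{H}({}_{H}k,{}_{H}H)$ is forced to be a one-dimensional representation of $H$, exactly as in the classical uniqueness of integrals. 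The right-module versions follow by running the same argument over $H^{op}$, which is again an affine noetherian PI Hopf algebra.

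The main obstacle is the invertibility step of the first stage. For a general noetherian affine PI algebra the dualizing complex can genuinely occupy several degrees, so finite injective dimension is the substantive content, not a formality; the entire weight of the proof lies in showing that Hopf-module untwisting and the winding-automorphism symmetry compress $R$ into one invertible twisted bimodule. Once that rigidity is secured, the identification of the character $\mu$ and the one-dimensionality of the integral in the second stage are essentially formal consequences.
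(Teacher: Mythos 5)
Note first that the paper does not actually prove Lemma \ref{l2.1}: it is quoted verbatim from Wu--Zhang \cite{WZ} (Theorems 0.1 and 0.2(1)), so your proposal must be measured against that paper's argument. Your plan breaks down at its very first step. The existence of an Auslander rigid dualizing complex for an \emph{arbitrary} affine noetherian PI algebra is not a theorem of Yekutieli--Zhang: their existence results require substantially stronger hypotheses (for instance a noetherian connected filtration, or module-finiteness over an affine center), and for general affine noetherian PI algebras existence was, and to my knowledge remains, open. Wu--Zhang do not assume it; in the literature the logical order is the reverse of yours. One first proves that $H$ is AS-Gorenstein --- that is the entire content of \cite{WZ}, obtained by direct homological arguments (the Hopf-module untwisting $V\otimes H\cong H\otimes V_0$, reduction of injective-dimension questions to finite-dimensional modules via the PI Nullstellensatz, and induction arguments exploiting the FBN/PI structure) --- and only \emph{then} does one obtain the rigid dualizing complex, in the invertible form $H^{\nu}[d]$ with $\nu$ built from $S^2$ and the winding automorphism of the integral character, by Van den Bergh's rigidity theorem. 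Your route needs existence of $R$ as an input it cannot have.

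Even granting existence, your pivotal ``compression/invertibility'' step is asserted rather than proved, and it is circular as framed. Concentration of $R$ in a single cohomological degree is precisely the (GKdim-)Cohen--Macaulay property of $H$, which together with AS-Gorensteinness \emph{is} the theorem; speaking of ``the canonical/dualizing module'' presupposes that concentration. Moreover, affine noetherian PI Hopf algebras are not known to be homologically homogeneous (they may well have infinite global dimension), the assertion that a ``reflexive rank-one bimodule with this uniformity is invertible'' has no justification in the noncommutative setting, and --- most tellingly --- the automorphism $\mu$ you invoke is the winding automorphism of the character carried by the left integral $\Ext_H^d({}_Hk,{}_HH)$, an object whose existence in a single degree and one-dimensionality are exactly conditions (AS1)--(AS2) that you are trying to establish. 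The sound parts of your sketch --- the untwisting isomorphism giving $\Ext^i_H(V,H)\cong \Ext^i_H(k,H)\otimes V_0^{*}$, the fact that simple modules are finite-dimensional for affine PI algebras, and the purely formal derivation of (AS2) from $R\cong H^{\mu}[d]$ --- are genuine standard ingredients, but the hard core of Wu--Zhang's theorem (finiteness of the injective dimension, equivalently the vanishing of $\Ext^i_H(k,H)$ in all but one degree) is exactly what your ``translation functor'' and ``uniformity'' remarks leave untouched.
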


\subsection{}{\bf Homological integrals.}

The concept \emph{homological integral} can be defined for an
AS-Gorenstein augmented algebra.
\begin{definition}\cite[Definition 1.3]{BZ}
Let $(A, \epsilon)$ be a noetherian augmented algebra and suppose that
$A$ is  AS-Gorenstein of injective dimension $d$. Any non-zero
element of the one-dimensional $A$-bimodule $\Ext_A^d( _Ak,\; _AA)$ is
called a \emph{left homological integral} of $A$. We write
$\int_A^l=\Ext_A^d(_Ak,\; _AA)$. Any non-zero element in
$\Ext_{A^{op}}^d(k_A, A_A)$ is called a \emph{right homological
integral} of $A$. We write $\int_A^r=\Ext_{A^{op}}^d(k_A, A_A)$. By
abusing the language we also call $\int_A^l$ and $\int_A^r$ the left
and the right homological integrals of $A$ respectively.
\end{definition}

\noindent{$\bullet$ \emph{Winding automorphisms}.}  Let $H$ be an
affine noetherian PI Hopf algebra. By Lemma \ref{l2.1}, it is AS-Gorenstein and thus has
left homological integrals $\int_{H}^{l}$. Let $\pi : H \to H/\text{r.ann}(\int_H^l)$ be the canonical
algebra homomorphism, where
$\text{r.ann}(\int_H^l)$ denotes the set of right  annihilators of
$\int_H^l$ in $H$. We write $\Xi_\pi^l$ for the \emph{left winding
automorphism} of $H$ associated to $\pi$, namely
$$\Xi_\pi^l(a):=\sum\pi(a_1)a_2 \;\;\;\;\;\;\;\textrm{for} \;a\in H.$$ Similarly we use $\Xi_\pi^r$
for the right winding automorphism of $H$ associated to $\pi$,
that is,
$$\Xi_\pi^r(a):=\sum a_1\pi(a_2)\;\;\;\;\;\;\textrm{for}\; a\in H.$$

Let $G_\pi^l$ and $G_\pi^r$ be the subgroups of
$\text{Aut}_{k\text{-alg}}(H)$ generated by $\Xi_\pi^l$ and
$\Xi_\pi^r$, respectively.\\[2mm]
\emph{$\bullet$ Integral order and integral minor.} With the same
notions as above, the {\it integral order} $\io(H)$ of $H$ is
defined by the order of the group $G_\pi^l$ :
\begin{equation}\io(H):=|G_\pi^l|.\end{equation}
As noted in \cite[Lemma 2.1]{BZ}, we always have
$|G_\pi^l|=|G_\pi^r|$. So the above definition is independent of the choice
of $G_\pi^l$ or $G_\pi^r$. In addition, if $H$ is prime regular of
GK-dimension one, then \cite[Theorem 7.1]{LWZ} implies
$$\text{PI-deg}(H)=\io(H).$$

The \emph{integral minor} of $H$, denoted by $\im(H)$, is defined by
\begin{equation}\text{im}(H):=|G_\pi^l/G_\pi^l\cap G_\pi^r|.\end{equation}

\begin{remark} Crudely speaking, $\io(H)$ is a measure of the commutativity of $H$
and $\im(H)$ is a measure of the cocommutativity of $H$. In fact,
for a prime regular Hopf algebra $H$ of GK-dimension one, we have
 $\io(H)=1$ if and only if $H$ is
commutative \emph{(see }\cite[Corollary 7.8]{LWZ}\emph{)} and
$\im(H)=1$ if and only if $H$ is cocommutative \emph{(see
}\cite[Section 4]{BZ}\emph{)}.
\end{remark}

\noindent$\bullet$ \emph{Strongly graded and bigraded properties}.
Let $H$ be a prime regular Hopf algebra of GK-dimension one. By \cite[Theorem 2.5]{BZ},
$|G_\pi^l|$ is finite, say $n$. Therefore, the character group $\widehat{G_\pi^l}:=\text{Hom}_{k\text{-alg}}(kG_\pi^l, k)$ of $G_\pi^l$ is isomorphic to $G_\pi^l$. Similarly, the character group $\widehat{G_\pi^r}$ of
$G_\pi^r$ is isomorphic to $G_\pi^r$.


Fix a primitive $n$th root $\zeta$ of 1 in $k$, and define $\chi\in
\widehat{G_\pi^l}$ and $\eta\in \widehat{G_\pi^r}$ by setting
$$\chi(\Xi_\pi^l)=\zeta \quad \text{and} \quad
\eta(\Xi_\pi^r)=\zeta.$$ Thus $\widehat{G_\pi^l}=\{\chi^i|0\leqslant
i\leqslant n-1\}$ and $ \widehat{G_\pi^r}=\{\eta^j|0\leqslant
j\leqslant n-1\}$. For each $0\leqslant i, j\leqslant n-1$, let
$$H_i^l:=\{a\in H|\Xi_\pi^l(a)=\chi^i(\Xi_\pi^l)a\}$$ and
$$H_j^r:=\{a\in H|\Xi_\pi^r(a)=\eta^j(\Xi_\pi^r)a\}.$$

The  following lemma is  \cite[Theorem 2.5 (b)]{BZ}.

\begin{lemma} \emph{(1)} $H=\bigoplus_{\chi^i\in
\widehat{G_\pi^l}}H_i^l$ is strongly $\widehat{G_\pi^l}$-graded.

\emph{(2)} $H=\bigoplus_{\eta^j\in
\widehat{G_\pi^r}}H_j^r$ is strongly $\widehat{G_\pi^r}$-graded.
\end{lemma}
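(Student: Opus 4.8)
The plan is to realise the decomposition as the eigenspace decomposition of the finite-order algebra automorphism $\Xi_\pi^l$ and then to upgrade it to a \emph{strong} grading by invoking primeness together with the equality $\io(H)=\text{PI-deg}(H)$. First I would record that $\Xi_\pi^l$ is a $k$-algebra automorphism with $(\Xi_\pi^l)^n=\id$, where $n=|G_\pi^l|$; since $k$ is algebraically closed of characteristic $0$ it contains a primitive $n$-th root $\zeta$, so the minimal polynomial of $\Xi_\pi^l$ divides $X^n-1$ and has simple roots. Hence $\Xi_\pi^l$ is diagonalisable and $H=\bigoplus_{i=0}^{n-1}H_i^l$, with $H_i^l$ the $\zeta^i$-eigenspace (this works in arbitrary dimension via the Chinese Remainder Theorem applied to $k[X]/(X^n-1)$). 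Because $\Xi_\pi^l$ is multiplicative, $a\in H_i^l$ and $b\in H_j^l$ force $\Xi_\pi^l(ab)=\zeta^{i+j}ab$, so $H_i^l H_j^l\subseteq H_{i+j}^l$ (indices mod $n$) and $1\in H_0^l$; placing $H_i^l$ in degree $\chi^i$ this is exactly a $\widehat{G_\pi^l}$-grading of the algebra $H$.

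It remains to show the grading is strong, i.e. $H_i^l H_j^l=H_{i+j}^l$ for all $i,j$. Since $\widehat{G_\pi^l}$ is cyclic with generator $\chi$, this reduces to the single equality $H_1^l H_{-1}^l=H_0^l=H_{-1}^l H_1^l$, and by Dade's theorem it is equivalent to each component $H_i^l$ being an invertible $(H_0^l,H_0^l)$-bimodule, i.e. $H_i^l\otimes_{H_0^l}H_{-i}^l\cong H_0^l$. I would attack this in the Galois-theoretic framework: the cyclic group $G_\pi^l$ acts on $H$ with fixed ring $H_0^l=H^{G_\pi^l}$, and strongness is precisely the statement that $H_0^l\subseteq H$ is a $\widehat{G_\pi^l}$-Galois (equivalently crossed-product) extension. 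Here the hypotheses must enter: $H$ is prime PI with $\text{PI-deg}(H)=\io(H)=n$, and I expect this equality to force the $G_\pi^l$-action to be $X$-outer, so that the skew group ring $H\#G_\pi^l$ is prime and Morita equivalent to $H_0^l$ through $H$ — which is exactly the Galois/strong condition. A second, more subject-specific route is to produce explicit homogeneous generators from the homological integral: the character $\pi$ underlying $\Xi_\pi^l$ comes from $\int_H^l$, and the interplay between $\int_H^l$ and $\int_H^r$ should yield a degree-$\chi$ element together with a degree-$\chi^{-1}$ partner whose product is a unit, giving $1\in H_1^l H_{-1}^l$ directly.

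The main obstacle is the strongness itself, namely the nonvanishing and invertibility of the components. The grading and its multiplicativity are formal consequences of $\Xi_\pi^l$ being a finite-order automorphism, but nothing about a general such automorphism forces $H_i^l\neq 0$ for every $i$, let alone forces the products to exhaust $H_0^l$; a priori several eigenspaces could collapse. Controlling the $X$-outerness of the action — equivalently, ruling out that some nontrivial power of $\Xi_\pi^l$ acts innerly on a nonzero graded piece — is the delicate point, and this is where ``prime'', ``regular'' and ``GK-dimension one'' are genuinely needed, through $\text{PI-deg}(H)=\io(H)$ and the homological-integral origin of $\pi$. Finally, part (2) follows by running the same argument with the right winding automorphism $\Xi_\pi^r$ and the right integral, or by transporting (1) through the opposite algebra $H^{op}$; the identity $|G_\pi^l|=|G_\pi^r|$ guarantees that both gradings have the same number $n$ of components.
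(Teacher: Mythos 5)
The paper offers no proof of this lemma at all---it is quoted verbatim from \cite[Theorem 2.5 (b)]{BZ}---so your attempt must be measured against Brown--Zhang's argument. Your first paragraph (diagonalisability of the finite-order automorphism $\Xi_\pi^l$, multiplicativity of the eigenspaces, and the reduction of strongness for a cyclic grading group to the single equality $H_1^lH_{-1}^l=H_0^l=H_{-1}^lH_1^l$) is correct. The genuine gap is in your mechanism for strongness. X-outerness of the action together with primeness of the skew group ring does \emph{not} imply that an eigenspace grading is strong: take $A=k[x]$ with $\sigma(x)=\zeta x$ for $\zeta$ a primitive $n$-th root of $1$; this faithful action of $\Z/n\Z$ on a commutative domain is X-outer, yet $A_1A_{n-1}=x^nk[x^n]\subsetneq k[x^n]=A_0$, so the grading is not strong. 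Hence no amount of primeness or PI-degree bookkeeping applied to a general finite-order automorphism can close the argument; what makes winding gradings special is exactly their Hopf-theoretic origin, which your plan relegates to a vague ``second route'' and never executes. Worse, invoking $\text{PI-deg}(H)=\io(H)$ here risks circularity: in this circle of papers the strong grading is an \emph{input} to the PI-degree estimates (see how Lemma \ref{d4.7} of the present paper uses strongness, via \cite[Proposition 5.1]{BZ}, to bound the PI-degree).

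The actual proof is elementary Hopf algebra theory and needs neither primeness, nor regularity, nor GK-dimension one---only that $\pi$ has finite convolution order $n$ (the paper's remark after Lemma \ref{l2.5} signals exactly this). First, every component meets the counit nontrivially: $\epsilon,\pi,\pi^{*2},\ldots,\pi^{*(n-1)}$ are distinct characters of $H$ (the map $\mu\mapsto\Xi_\mu^l$ is injective because $\epsilon\circ\Xi_\mu^l=\mu$), hence linearly independent, so the convolution idempotent $e_i=\tfrac{1}{n}\sum_{j}\zeta^{-ij}\pi^{*j}$ is nonzero; choosing $a$ with $e_i(a)=1$, the element $\sum e_i(a_1)a_2$ lies in $H_i^l$ and has counit $1$. (Alternatively, $\Delta(H_i^l)\subseteq H_i^l\otimes H$ shows that $\epsilon(H_i^l)=0$ would force $H_i^l=0$.) Second, for $a\in H_i^l$ with $\epsilon(a)=1$, decompose $\Delta(a)\in H_i^l\otimes H=\bigoplus_j H_i^l\otimes H_j^r$ and apply $\sum a_1S(a_2)=\epsilon(a)1=1$: since $\Xi_\pi^lS=S(\Xi_\pi^r)^{-1}$ (the mirror of Lemma \ref{l2.5}(b)) gives $S(H_j^r)\subseteq H_{-j}^l$, projecting onto the degree-zero part of the left grading yields $1\in H_i^lH_{-i}^l$, and symmetrically $1\in H_{-i}^lH_i^l$, for \emph{every} $i$. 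Note that this exhibits $1$ as a sum of products of homogeneous pairs, not as a product of a unit with its inverse; your hoped-for ``degree-$\chi$ element with a degree-$\chi^{-1}$ partner whose product is a unit'' fails already in this paper's own examples $D(m,d,\xi)$, whose odd components $\sum_{s}k[x^{\pm1}]g^{(i-1)/2}u_s$ contain no invertible elements even though the grading is strong. Your part (2), transporting the argument through the right winding automorphism, is fine once (1) is proved correctly.
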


It is clear that $\Xi_\pi^l \Xi_\pi^r=\Xi_\pi^r \Xi_\pi^l$, so
$H_i^l$ is stable under the action of $G_\pi^r$. Consequently, the
$\widehat{G_\pi^l}$- and $\widehat{G_\pi^r}$-gradings on $H$ are
\emph{compatible} in the sense that
$$H_i^l=\bigoplus_{0\leqslant j \leqslant n-1}(H_i^l\cap H_j^r)\quad \text{and}\quad H_j^r=\bigoplus_{0\leqslant i \leqslant n-1}(H_i^l\cap H_j^r)$$
for all $i, j$.
 Then $H$ is a bigraded algebra:
\begin{equation}H=\bigoplus_{0\leqslant i,j\leqslant n-1} H_{ij},\end{equation}
where $H_{ij}=H_i^l\cap
H_j^r$. And we write $H_0=H_{00}$ for convenience.

For later use, we collect some properties about $H$ which are \cite[Proposition 2.1 (c)(e)]{BZ} and \cite[Theorem 2.5 (f)]{BZ}.
\begin{lemma}\label{l2.5} Let $H$ be an affine prime regular Hopf
algebra of GK-dimensional one. Then

\emph{(a)} $\Delta(H_i^l)\subseteq H_i^l\otimes H$ and $\Delta
(H_j^r)\subseteq H\otimes H_j^r$; thus $H_i^l$ is a right coideal of
$H$ and $H_j^r$ is a left coideal of $H$;

\emph{(b)} $\Xi_\pi^r S=S(\Xi_\pi^l)^{-1},$
where $(\Xi_\pi^l)^{-1}=\Xi_{\pi\circ S}^l.$

\emph{(c)} $H_0^l, H_0^r$ and $H_0$ are affine commutative Dedekind
domains with $H_0^l\cong H_0^r$.
\end{lemma}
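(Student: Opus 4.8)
The plan is to dispatch (a) and (b) by the Hopf-algebra formalism and to locate the real work in the commutativity assertion of (c).

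For (a), I would use that the left winding automorphism factors as $\Xi_\pi^l=(\pi\otimes\id)\circ\Delta$, where $\pi$ is regarded as the character affording the left integral (the target of $\pi$ being one-dimensional). Coassociativity then yields the intertwining identity $\Delta\circ\Xi_\pi^l=(\Xi_\pi^l\otimes\id)\circ\Delta$, since for $a\in H$ both sides equal $\sum\pi(a_1)a_2\otimes a_3$. Consequently, if $a\in H_i^l$, so that $\Xi_\pi^l(a)=\zeta^i a$, then $(\Xi_\pi^l\otimes\id)\Delta(a)=\zeta^i\Delta(a)$; that is, $\Delta(a)$ lies in the $\zeta^i$-eigenspace of $\Xi_\pi^l\otimes\id$ acting on $H\otimes H$, which is precisely $H_i^l\otimes H$. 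Hence $\Delta(H_i^l)\subseteq H_i^l\otimes H$, so $H_i^l$ is a right coideal, and the mirror computation with $\Xi_\pi^r=(\id\otimes\pi)\circ\Delta$ proves $\Delta(H_j^r)\subseteq H\otimes H_j^r$.

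For (b), I would first record the composition law $\Xi_\gamma^l\circ\Xi_\delta^l=\Xi_{\delta*\gamma}^l$ for winding automorphisms (where $*$ is convolution) together with $\Xi_\e^l=\id$; since $\pi$ is an algebra map, $\pi\circ S$ is its convolution inverse, which gives $(\Xi_\pi^l)^{-1}=\Xi_{\pi\circ S}^l$. The identity $\Xi_\pi^r S=S(\Xi_\pi^l)^{-1}$ is then a direct check: using $\Delta(S(a))=\sum S(a_2)\otimes S(a_1)$, the left-hand side applied to $a$ equals $\sum S(a_2)\pi(S(a_1))$, while the right-hand side equals $S\big(\sum\pi(S(a_1))a_2\big)=\sum\pi(S(a_1))S(a_2)$, and these agree. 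A by-product, used below, is that (b) forces $S(H_0^l)=H_0^r$: if $\Xi_\pi^l(a)=a$ then $\Xi_\pi^r(S(a))=S((\Xi_\pi^l)^{-1}(a))=S(a)$.

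The substance is (c). By the structure theory of prime regular Hopf algebras of GK-dimension one \cite{LWZ}, $H$ is a domain, so its homogeneous subalgebras $H_0^l,H_0^r$ and their intersection $H_0$ are domains; moreover, since the gradings are strong and indexed by a finite group of order $n=\io(H)$, the algebra $H$ is a finite module over $H_0^l$, whence $\operatorname{GKdim}H_0^l=\operatorname{GKdim}H=1$, and $H_0^l=H^{G_\pi^l}$ is affine by the Artin--Tate argument for invariants of a finite group. The main obstacle is commutativity. The approach I would take is to pass to the quotient division algebra $Q(H)$, which is central simple of dimension $n^2$ over $Q(Z)$ with $Z=Z(H)$, and to exploit that the strong grading by the cyclic group $\widehat{G_\pi^l}$ presents $H$ as a crossed product over $H_0^l$ (in the noncommutative case each graded component contains a unit of $H$). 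Writing $t$ for a homogeneous unit of degree one, one has $[Q(H):Q(H_0^l)]=n$, so $[Q(H_0^l):Q(Z)]=n$; the crux is to show that $\Xi_\pi^l$ acts on the center of $H_0^l$ with full order $n$, for then $[Q(Z(H_0^l)):Q(Z)]=n$ forces $Q(H_0^l)=Q(Z(H_0^l))$, i.e. $H_0^l$ is commutative. This faithfulness statement, equivalently the assertion that $\Xi_\pi^l$ is suitably outer, is exactly where the GK-dimension-one geometry rather than soft formalism must enter, and I expect it to be the hard point. Granting commutativity, $H_0^l\cong H_0^r$ follows from the by-product of (b), since the anti-isomorphism $S$ between two commutative rings is an isomorphism; and finally, as $H$ is a maximal order, hence integrally closed, the invariant domain $H_0^l$ is integrally closed in its fraction field, so being a Noetherian integrally closed domain of Krull dimension one it is Dedekind.
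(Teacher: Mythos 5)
Your parts (a) and (b) are correct, and they are exactly the formal arguments underlying the cited source: the paper itself gives no proof of this lemma but imports it from \cite[Proposition 2.1(c)(e), Theorem 2.5(f)]{BZ}, and your intertwining identity $\Delta\circ\Xi_\pi^l=(\Xi_\pi^l\otimes\id)\circ\Delta$, the convolution computation $(\Xi_\pi^l)^{-1}=\Xi_{\pi\circ S}^l$, and the check of $\Xi_\pi^r S=S(\Xi_\pi^l)^{-1}$ use only the Hopf formalism plus finiteness of the integral order (needed so that $\Xi_\pi^l$ is diagonalizable and the eigenspace description of $H_i^l\otimes H$ makes sense). This matches the paper's remark that (a) and (b) hold whenever $H$ has finite integral order. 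One small point: for $S|_{H_0^l}\colon H_0^l\to H_0^r$ to be an isomorphism you need bijectivity of $S$ (true here, but worth saying), with surjectivity coming from the mirror identity $\Xi_\pi^l S=S(\Xi_\pi^r)^{-1}$.

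Part (c), however, has a genuine gap, and it opens with a false premise: $H$ is \emph{not} a domain in general. The infinite dihedral group algebra $k\mathbb{D}$, the infinite dimensional Taft algebras $H(n,t,\xi)$, and indeed the new algebras $D(m,d,\xi)$ of this very paper all contain a group-like $g$ of finite order, so $g-1$ is a zero divisor; \cite{LWZ} asserts no such domain property. Consequently you cannot deduce that $H_0^l$, $H_0^r$, $H_0$ are domains by inheritance --- the domain property of $H_0^l$ is itself part of what must be proved. Second, the commutativity of $H_0^l$, which you correctly identify as the crux, is left unproved (``Granting commutativity\ldots''), and the statement of the intended key step is garbled: $\Xi_\pi^l$ fixes $H_0^l$ pointwise by definition, so it cannot ``act on the center of $H_0^l$ with full order $n$''; what you must mean is the conjugation action by homogeneous units in the quotient ring (the automorphisms $\kappa_i$ of \cite[Section 5]{BZ}, recalled in Section 6 of this paper), and proving that this action is faithful of order $n$ is precisely the nontrivial content --- so the proposal stops short of a proof of (c). Third, two auxiliary claims are unjustified: a strong grading by a cyclic group gives invertible $H_0^l$-bimodules $H_i^l$, not homogeneous units, so the crossed-product presentation is not available a priori; and ``$H$ is a maximal order, hence $H_0^l$ is integrally closed'' does not follow. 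The standard route to Dedekindness (visible in this paper's Section 5 treatment of $\widetilde{H}$) is instead homological: strong grading makes $H$ projective over $H_0^l$ with $H_0^l$ a direct summand, whence $\mathrm{gl.dim}\,H_0^l\leqslant \mathrm{gl.dim}\,H=1$, and an affine commutative noetherian domain of global dimension one is Dedekind.
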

Note that only (c) needs all the hypotheses of $H$, (a) and (b) hold
if $H$ has finite integral order.

\begin{remark}
\emph{(1)} By \cite{Sm,SSW}, prime affine algebras of GK-dimension one are
noetherian and PI automatically. So ``\emph{noetherian}" and ``\emph{PI}" do not appear in
the title of this paper.

\emph{(2)} If $H$ is an affine prime regular Hopf algebra of GK-dimensional
one, then \emph{gl.dim}$H=1$. Indeed, assume that \emph{gl.dim}$H=d$. Wu
and Zhang \cite{WZ} proved that every noetherian affine PI Hopf algebra is
Cohen-Macaulay and this forces $d=1$.

\end{remark}

\subsection{}{\bf Known examples.}

The following examples appeared in \cite{BZ} already and we recall them for completeness.

\noindent$\bullet$ \emph{Connected algebraic groups of dimension
one}. It is well-known that there are precisely two connected algebraic groups
of dimension one (see, say \cite[Theorem 20.5]{Hu}) over an algebraically
closed field $k$. Therefore, there are precisely two commutative
$k$-affine domains of GK-dimension one which admit a structure of
Hopf algebra, namely $H_1=k[x]$ and $H_2=k[x^{\pm 1}]$. For $H_1$,
$x$ is a primitive element, and for $H_2$, $x$ is a group-like
element. Commutativity and cocommutativity imply that
$\io(H_i)=\im(H_i)=1$ for $i=1, 2$.

\noindent$\bullet$ \emph{Infinite dihedral group algebra}. Let $\mathbb{D}$
denote the infinite dihedral group $\langle g, x | g^2 = 1,
gxg=x^{-1}\rangle$. Both $g$ and $x$ are group-like elements in the group
algebra $k\mathbb{D}$. By cocommutativity, $\im(k\mathbb{D})=1$.
Using \cite[Lemma 2.6]{LWZ}, one sees that as a right $H$-module,
$$\int_{k\mathbb{D}}^l\cong k\mathbb{D}/\langle x-1, g+1\rangle.$$
This implies $\io(k\mathbb{D})=2$.

\noindent$\bullet$ \emph{Infinite dimensional Taft algebras}. Let
$n$ and $t$ be integers with $n>1$ and
$0\leqslant t \leqslant n-1$. Fix a primitive $n$th root $\xi$ of
$1$. Let $H(n, t, \xi)$ be the algebra generated by $x$ and $g$
subject to the relations $$g^n=1\quad \text{and} \quad xg=\xi gx.$$
Then $H(n, t, \xi)$ is a Hopf algebra with coalgebra structure given
by
$$\D(g)=g\otimes g,\ \epsilon(g)=1 \quad \text{and} \quad \D(x)=x\otimes g^t+1\otimes x,\
\epsilon(x)=0,$$ and with $$S(g)=g^{-1}\quad \text{and} \quad
S(x)=-xg^{-t}.$$

As computed in \cite[Subsection 3.3]{BZ}, we have $$\int_H^l \cong
H/\langle x, g-\xi^{-1}\rangle,$$ and the corresponding homomorphism
$\pi$ yields left and right winding automorphisms
\[{\Xi_{\pi}^l:}
\begin{cases}
x\longmapsto x, &\\
g\longmapsto \xi^{-1}g, &
\end{cases} \textrm{and} \;\;\;\;\;
\Xi_{\pi}^r:
\begin{cases}
x\longmapsto \xi^{-t}x, &\\
g\longmapsto \xi^{-1}g. &
\end{cases}\]
So that $G_\pi^l=\langle \Xi_{\pi}^l\rangle$ and $G_\pi^r=\langle
\Xi_{\pi}^r\rangle$ have order $n$. If gcd$(n, t)=1$, then
$G_\pi^l\cap G_\pi^r=\{1\}$ and \cite[Propositon 3.3]{BZ} implies
that there exists a primitive $n$th root $\eta$ of 1 such that
$H(n, t, \xi)\cong H(n, 1, \eta)$ as Hopf algebras. If gcd$(n,
t)\neq 1$, let $m:=n/\text{gcd}(n, t)$, then $G_\pi^l\cap
G_\pi^r=\langle (\Xi_{\pi}^l)^m\rangle$.

Thus we have $\io(H(n, t, \xi))=n$ and $\im(H(n, t, \xi))=m$ for any
$t$. In particular, $\im(H(n, 0, \xi))=1$, $\im(H(n, 1, \xi))=n$ and
$\im(H(n, t, \xi))=m=n/t$ when $t|n$.

Now assume $t|n$ and $m=n/t$, and let $H:=H(n, t, \xi)$, we
calculate the homogeneous parts $H_i^l,\; H_j^r$ and $H_{ij}$ for
our later arguments.
 The gradings start from fixing a primitive $n$th
root $\zeta$ of $1$. We choose $\zeta=\xi^{-1}$. By the expressions
of $\Xi_{\pi}^l$ and  $\Xi_{\pi}^r$, it is not difficult to find
that \begin{equation} H_i^l=k[x]g^i\quad \text{and}\quad
H_j^r=k[xg^{-t}]g^j\end{equation}
 for
all $0\leqslant i, j\leqslant n-1$. Thus we have
\begin{equation}H_{00}=k[x^m]\quad \text{and}\quad H_{i, i+jt}=k[x^m]x^jg^i\end{equation}
for all $0\leqslant i\leqslant n-1, 0\leqslant j\leqslant m-1$.
Moreover we can see that $$H_{ij}=0\;\; \textrm{if}
\;\;i-j\not\equiv 0\; (\textrm{mod}\; t)$$ for all $0\leqslant i,
j\leqslant n-1$.

\noindent$\bullet$ \emph{Generalized Liu algebras}. Let $n$ and
$\omega$ be positive integers. The generalized Liu algebra, denoted
by $B(n, \omega, \gamma)$, is generated by $x^{\pm 1}, g$ and $y$,
 subject to the relations
\begin{equation*}
\begin{cases}
xx^{-1}=x^{-1}x=1,\quad  xg=gx,\quad  xy=yx, & \\
yg=\gamma gy, & \\
y^n=1-x^\omega=1-g^n, & \\
\end{cases}
\end{equation*}
where  $\gamma$ is a primitive $n$th root of 1. The
comultiplication, counit and antipode of $B(n, \omega, \gamma)$  are
given by
$$\Delta(x)=x\otimes x,\quad  \Delta(g)=g\otimes g, \quad   \Delta(y)=y\otimes g+1\otimes y,$$
$$\epsilon(x)=1,\quad  \epsilon(g)=1,\quad    \epsilon(y)=0,$$ and
$$S(x)=x^{-1},\quad  S(g)=g^{-1} \quad S(x)=-yg^{-1}.$$

Let $B:=B(n, \omega, \gamma)$. Using \cite[Lemma 2.6]{LWZ}, we get
$\int_B^l=B/\langle y, x-1, g-\gamma^{-1}\rangle$. The corresponding
homomorphism $\pi$ yields left and right winding automorphisms
\[{\Xi_{\pi}^l:}
\begin{cases}
x\longmapsto x, &\\
g\longmapsto \gamma^{-1}g, &\\
y\longmapsto y, &
\end{cases}
\textrm{and}\;\;\;\; \Xi_{\pi}^r:
\begin{cases}
x\longmapsto x, &\\
g\longmapsto \gamma^{-1}g, &\\
y\longmapsto \gamma^{-1}y. &
\end{cases}\]
Clearly these automorphisms have order $n$ and $G_\pi^l\cap
G_\pi^r=\{1\}$, whence $\io(B)=\im(B)=n$.

Choosing $\zeta=\gamma^{-1}$ for defining gradings of $B$, we can
see
\begin{equation}B_i^l=k\langle x^{\pm 1}, y\rangle g^i \quad \text{and}\quad B_j^r=k\langle x^{\pm 1}, yg^{-1}\rangle g^j\end{equation}
for all $0\leqslant i, j\leqslant n-1$. Thus
\begin{equation}B_{0}=k[x^{\pm
1}]\quad \text{and}\quad B_{ij}=k[x^{\pm
1}]y^{j-i}g^i,\end{equation} where $j-i$ is interpreted mod $n$.

Now, the main results of \cite{BZ} can be formulated in the
following form (for details, see  \cite[Proposition 3.1, Theorems 4.1 and
6.1]{BZ}).

\begin{theorem}\label{t2.7} Assume $H$ is an affine prime regular Hopf algebra of GK-dimensional
one.

\emph{(a)} If $\io(H)=\im(H)=1$, then $H\cong k[x]$ or $H\cong
k[x^{\pm 1}]$;

\emph{(b)} If $\io(H)=n>1$, $\im(H)=1$, then $H\cong H(n, 0, \xi)$
or $H\cong k\mathbb{D}$;

\emph{(c)} If $\io(H)=\im(H)=n$, then $H\cong H(n, 1, \xi)$ or
$H\cong B(n, \omega, \gamma)$.
\end{theorem}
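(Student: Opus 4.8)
The plan is to split along the hypothesis $(*)$, which is exactly what these three parts encode: parts (a) and (b) are the case $\im(H)=1$, while part (c) is the case $\im(H)=\io(H)$. Throughout I would lean on two facts. First, the criteria recalled above: $\io(H)=1$ if and only if $H$ is commutative, and $\im(H)=1$ if and only if $H$ is cocommutative. Second, the homogeneous piece $H_0=H_{00}$, which by Lemma \ref{l2.5}(c) is an affine commutative Dedekind domain of GK-dimension one; a key intermediate step will be to upgrade this to $H_0\cong k[z]$ or $H_0\cong k[z^{\pm 1}]$ by feeding in how the coproduct interacts with the grading (Lemma \ref{l2.5}(a)(b)). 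The strongly bigraded decomposition $H=\bigoplus_{i,j}H_{ij}$ then lets me reassemble $H$ from $H_0$ together with the invertible graded layers.

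For part (a) I would first note that the two hypotheses make $H$ simultaneously commutative and cocommutative; being prime it is a domain, hence an affine commutative Hopf domain of GK-dimension one, i.e.\ the coordinate ring of a one-dimensional connected algebraic group. Since there are only two such groups (\cite[Theorem 20.5]{Hu}), this forces $H\cong k[x]$ or $H\cong k[x^{\pm 1}]$. For part (b), $\im(H)=1$ makes $H$ cocommutative, so the Cartier--Gabriel--Kostant theorem (available since $k$ is algebraically closed of characteristic zero) gives $H\cong U(\mathfrak g)\#kG$ with $\mathfrak g=P(H)$ and $G=G(H)$. Matching GK-dimensions leaves only two possibilities: either $\dim_k\mathfrak g=1$ with $G$ finite, or $\mathfrak g=0$ with $kG$ prime regular of GK-dimension one. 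In the first, primeness together with $\io(H)=n$ forces $G\cong\Z_n$ to act faithfully through a primitive $n$-th root of unity, so $H\cong H(n,0,\xi)$; in the second, the only finitely generated groups whose group algebra is prime regular of GK-dimension one are $\Z$ (commutative, excluded by $\io(H)>1$) and the infinite dihedral group, giving $H\cong k\mathbb{D}$.

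Part (c) is the substantial case, and here I would proceed constructively. Since $\im(H)=\io(H)=n$ we have $G_\pi^l\cap G_\pi^r=\{1\}$, so the two gradings are independent and the bigrading is genuine. First I would extract from the winding-automorphism data a grouplike $g$ of order $n$ that generates the $\Z_n$-grading. Next, using strong gradedness (so each $H_i^l$ is an invertible $H_0^l$-module) together with the coideal property $\D(H_1^l)\subseteq H_1^l\otimes H$ of Lemma \ref{l2.5}(a), I would locate a generator $u$ in a nontrivial layer whose coproduct is forced into the skew-primitive form $\D(u)=u\otimes g+1\otimes u$, after normalising its commutation relation to $ug=\xi gu$. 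The final, decisive step is the isomorphism type of $H_0$. If $H_0\cong k[z]$, then $z$ is a scalar multiple of $u^n$, there is no further relation, and $H\cong H(n,1,\xi)$. If instead $H_0\cong k[z^{\pm1}]$, then $H$ carries an invertible central grouplike $x$ with $H_0=k[x^{\pm 1}]$, and Hopf compatibility forces the extra relations $g^n=x^\omega$ and $u^n=1-g^n$, yielding $H\cong B(n,\omega,\gamma)$.

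I expect the crux of the whole argument to sit in part (c), in two places. The first is proving that $H_0$ is \emph{exactly} $k[z]$ or $k[z^{\pm 1}]$ rather than the coordinate ring of some other smooth affine curve allowed by Lemma \ref{l2.5}(c); this is where the Hopf structure, rather than mere ring theory, has to do the real work. The second is showing that the coproduct and the power relation on the top generator are genuinely \emph{forced} into either the Taft form or the Liu form, with no exotic comultiplication or commutation surviving---here I would exploit the antipode--winding identity $\Xi_\pi^r S=S(\Xi_\pi^l)^{-1}$ of Lemma \ref{l2.5}(b) to cut down the possibilities. Once these two points are settled, the remainder is bookkeeping with the combinatorics of the bigrading.
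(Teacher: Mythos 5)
The first thing to note is that the paper contains no proof of Theorem \ref{t2.7} to compare against: it is stated as a reformulation of Brown--Zhang's classification, with an explicit pointer to \cite[Proposition 3.1, Theorems 4.1 and 6.1]{BZ} (and Lemma \ref{l2.8} is the version of part (c) actually used later). Measured against Brown--Zhang's own arguments, your parts (a) and (b) are faithful reconstructions. For (a), $\io(H)=1$ already forces commutativity (the cocommutativity you invoke is redundant but harmless), primeness makes $H$ a domain, and the two connected one-dimensional algebraic groups give $k[x]$ and $k[x^{\pm 1}]$; this is \cite[Proposition 3.1]{BZ}. For (b), $\im(H)=1$ gives cocommutativity, the Cartier--Gabriel--Kostant decomposition $H\cong U(\mathfrak{g})\# kG$ is available over an algebraically closed field of characteristic zero, and your GK-dimension count together with primeness (no nontrivial finite normal subgroup, so a virtually-$\Z$ group must be $\Z$ or the infinite dihedral group) lands exactly on $H(n,0,\xi)$ and $k\mathbb{D}$; this is the route of \cite[Section 4]{BZ}, including the observation that $G=\Z$ is excluded by $\io(H)>1$.

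Part (c) is where your proposal stops being a proof and becomes a plan. The architecture you describe --- trivial intersection $G_\pi^l\cap G_\pi^r$, the bigrading with invertible rank-one layers over the Dedekind domain $H_0$, a distinguished grouplike $g$, a skew-primitive $u$, and the dichotomy $H_0\cong k[z]$ versus $H_0\cong k[z^{\pm 1}]$ --- is indeed the shape of \cite[Sections 5 and 6]{BZ}, and you correctly identify the two decisive points. But those two points \emph{are} the proof: pinning $H_0$ down among affine Dedekind domains, and forcing $\D(u)$ into skew-primitive form with the right commutation and power relations, occupy the technical heart of Brown--Zhang's paper, and your sketch asserts rather than establishes them. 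For a sense of how nontrivial even a fragment of this is, compare Proposition \ref{p4.9} in the present paper, where showing that a single invertible homogeneous element is grouplike requires a careful highest-degree comparison of $(\D\otimes \Id)\D(u)$ with $(\Id\otimes \D)\D(u)$; nothing about the coideal property of Lemma \ref{l2.5}(a) alone ``forces'' the skew-primitive form. There are also two concrete slips. First, the grouplike $g$ you extract cannot have order $n$ in general: in the Liu case $g^n=x^\omega\neq 1$, so $g$ has infinite order, and which of the two situations occurs is exactly the dichotomy, not an input. Second, in the Laurent case the relations $g^n=x^\omega$ and $u^n=1-g^n$ are not consequences of ``Hopf compatibility'' in the abstract; in \cite{BZ} they are extracted from primeness, the antipode axioms $S*\Id=\Id*S=\epsilon$, and the strong grading, much as in Section 8 of this paper for the analogous $t=2$ situation. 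So: (a) and (b) are correct and match the source; (c) has the right skeleton but leaves the genuinely hard content of \cite[Theorem 6.1]{BZ} unproved.
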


In \cite[Theorem 6.1]{BZ}, the case (c) in the above theorem is
expressed in a more general and convenient form. For our purpose, we
state the general form as follows.

\begin{lemma}\label{l2.8} Let $H$ be an affine prime regular Hopf algebra of GK-dimension
one.  Assume that there exists an algebra homomorphism $\mu: H\to k$
such that $n:=\emph{PI-deg}(H)=|G_\mu^l|$ and $G_\mu^l\cap
G_\mu^r=\{1\}$, where $G_\mu^l$ and $G_\mu^r$ are the groups of left
and right winding automorphisms associated to $\mu$. Then $H$ is
isomorphic as a Hopf algebra either to the Taft algebra $H(n, 1,
\xi)$ or to the generalized Liu algebra $B(n, \omega, \gamma)$. As a
consequence, $\io(H)=\im(H)$.\end{lemma}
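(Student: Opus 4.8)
The plan is to treat this as the reformulation of the structural result \cite[Theorem 6.1]{BZ} and to replay the Brown--Zhang reconstruction using the given character $\mu$ in place of the integral character $\pi$. First I would record that, by \cite[Theorem 7.1]{LWZ}, $\io(H)=\text{PI-deg}(H)=n$ automatically, so the hypothesis $|G_\mu^l|=n$ says precisely that $\mu$ generates a left winding group of the maximal possible order. Since $\Xi_\mu^l$ and $\Xi_\mu^r$ commute and $H$ has finite integral order, the analogues of Lemma \ref{l2.5}(a),(b) hold for $\mu$ (they require only that the winding group be finite, as noted after Lemma \ref{l2.5}); in particular the analogue of Lemma \ref{l2.5}(b), $\Xi_\mu^r S=S(\Xi_\mu^l)^{-1}$, shows $S$ conjugates $G_\mu^l$ onto $G_\mu^r$, so $|G_\mu^r|=n$ as well. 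I would then set up the strong $\widehat{G_\mu^l}$- and $\widehat{G_\mu^r}$-gradings and the resulting bigrading $H=\bigoplus_{0\leqslant i,j\leqslant n-1}H_{ij}$, the hypothesis $G_\mu^l\cap G_\mu^r=\{1\}$ making these jointly a $\mathbb{Z}_n\times\mathbb{Z}_n$-grading.

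The heart of the argument is the reconstruction. By Lemma \ref{l2.5}(c), $H_0=H_{00}$ is an affine commutative Dedekind domain; using its GK-dimension and its compatibility with the grading I would show $H_0\cong k[x]$ or $H_0\cong k[x^{\pm 1}]$. Strong grading then presents $H$ as a crossed product over $H_0^l$: from a unit generating the $\mathbb{Z}_n$-grading I would extract a group-like element $g$ of order $n$, together with a degree-one skew-primitive generator whose commutation relation with $g$ produces the required primitive $n$-th root of unity. The condition $G_\mu^l\cap G_\mu^r=\{1\}$ should force each nonzero $H_{ij}$ to be free of rank one over $H_0$ and pin the relations down to those of the model, excluding any extra generators. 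Matching the two possibilities for $H_0$ would then yield $H\cong H(n,1,\xi)$ in the polynomial case and $H\cong B(n,\omega,\gamma)$ in the Laurent case.

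The concluding assertion $\io(H)=\im(H)$ would follow by reading these invariants off the two families, as computed in the Examples of Section 2. The main obstacle, and the reason this cannot simply be reduced to Theorem \ref{t2.7}(c), is exactly this reconstruction step: a priori one only knows $\io(H)=n$, while $\im(H)$ — defined through the integral character $\pi$ rather than through $\mu$ — could lie strictly between $1$ and $n$, which is precisely the case the rest of this paper is devoted to. Hence the rigidity of the generators and relations must be forced using only the winding data of $\mu$, namely $|G_\mu^l|=\text{PI-deg}(H)$ and $G_\mu^l\cap G_\mu^r=\{1\}$, without appealing to any special property of $\pi$; establishing that this data alone suffices is the crux, and is what \cite[Theorem 6.1]{BZ} supplies.
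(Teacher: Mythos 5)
Your proposal matches the paper's treatment: the paper gives no independent proof of this lemma, presenting it as the ``general form'' of case (c) of Theorem \ref{t2.7} and citing \cite[Theorem 6.1]{BZ} for it, which is exactly what your argument ultimately does after sketching the ingredients (the $\mu$-bigrading, $H_0$ a Dedekind domain, the reconstruction of generators) of the Brown--Zhang proof. Your closing observation is also the correct justification for stating the lemma in this generality: it cannot be deduced from Theorem \ref{t2.7}(c) alone, since a priori one only knows $\io(H)=n$ while $\im(H)$, defined through the integral character $\pi$ rather than $\mu$, could be strictly smaller.
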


\section{Some combinatorial equations}

We collect some combinatorial equations in this section. These equations turn out to be important
for the following analysis.

Throughout this section, $m,d$ are two natural numbers, $\gamma$ is an  $m$th primitive root of $1$ and $\xi$ an element in $k$ satisfying $\xi^{m}=-1$. For each $i\in \mathbbm{Z}$, $\phi_{i}$ is a polynomial defined by
$$\phi_{i}:=1-\gamma^{-i-1}x^{d}.$$
Take $t$ to be an arbitrary integer, define $\bar{t}$ to be the unique element in $\{0,1,\ldots,m-1\}$
satisfying $\bar{t}\equiv t$ {(mod}\,$m$). Then we have $$\phi_{t}=\phi_{\bar{t}}$$
since $\gamma^m=1$.

With this observation, we can use
$${]s,t[}$$
to denote the resulted polynomial by omitting all items
\emph{from} $\phi_{\overline{s}}$ \emph{to} $\phi_{\overline{t}}$ in $\phi_{0}\phi_{1}\cdots \phi_{m-1}$, that is

\begin{equation}\label{eqomit} {]s,t[}=\begin{cases}
\phi_{\bar{t}+1}\cdots \phi_{m-1}\phi_0\cdots\phi_{\bar{s}-1}, & \textrm{if}\; \bar{t}\geqslant \bar{s}
\\1, & \textrm{if}\; \bar{s}=\overline{t}+1 \\
\phi_{\bar{t}+1}\cdots \phi_{\bar{s}-1}, & \textrm{if}\;
\overline{s}\geqslant \bar{t}+2.
\end{cases} \end{equation}
For example, ${]{-1},{-1}[}={]{m-1},{m-1}[}=\phi_0\phi_1\cdots
\phi_{m-2}$.

To study equations with omitting items, the following formula is useful
for us.

\begin{lemma}\cite[Proposition IV.2.7.]{Kas}\label{kas}
Fix an invertible element $q$ of the field $k$. For any scalar $a$
we have
$$(a-z)(a-qz)\cdots (a-q^{n-1}z)=\sum_{l=0}^{n}(-1)^l\binom{n}{l}_q q^{\tfrac{l(l-1)}{2}}a^{n-l}z^l.$$
\end{lemma}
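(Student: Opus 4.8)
This is the Gauss (or $q$-) binomial theorem, so the natural plan is to prove it by induction on $n$, the engine being the $q$-Pascal recursion for the Gaussian binomial coefficients. Writing $(k)_q=1+q+\cdots+q^{k-1}$ and $\binom{n}{l}_q=\tfrac{(n)_q!}{(l)_q!\,(n-l)_q!}$, the recursion I would use is
$$\binom{n}{l}_q=\binom{n-1}{l}_q+q^{\,n-l}\binom{n-1}{l-1}_q,$$
valid for $0\leqslant l\leqslant n$ with the usual convention that the coefficient vanishes outside this range. (This follows directly from the definition by clearing the common denominator $(l)_q!(n-l)_q!$ and using $(n)_q=(n-l)_q+q^{\,n-l}(l)_q$.) The base case $n=1$ is immediate, since both sides equal $a-z$.

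For the inductive step I would factor out the newest factor, writing the left-hand side as
$$\prod_{k=0}^{n-1}(a-q^kz)=(a-q^{n-1}z)\prod_{k=0}^{n-2}(a-q^kz),$$
and substitute the inductive hypothesis for the product over $k\leqslant n-2$. Distributing $(a-q^{n-1}z)$ produces two sums: one coming from multiplication by $a$, which already has the shape $\sum_{l=0}^{n-1}(-1)^l\binom{n-1}{l}_q q^{l(l-1)/2}a^{n-l}z^l$, and one coming from multiplication by $-q^{n-1}z$. In the second sum I would reindex $l\mapsto l-1$ so that both sums run over the monomials $a^{n-l}z^l$ with $0\leqslant l\leqslant n$, after which it is a matter of matching coefficients.

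The only point that requires genuine care is the exponent bookkeeping in the reindexed second sum, where the power of $q$ appears as $q^{(l-1)(l-2)/2+(n-1)}$. The key algebraic identity is
$$\frac{(l-1)(l-2)}{2}+(n-1)=\frac{l(l-1)}{2}+(n-l),$$
which one checks by noting the difference of the two triangular numbers is $l-1$. This rewrites the second sum's coefficient as $q^{\,n-l}\binom{n-1}{l-1}_q$ times $(-1)^l q^{l(l-1)/2}a^{n-l}z^l$, so that adding the two contributions gives exactly $\binom{n-1}{l}_q+q^{\,n-l}\binom{n-1}{l-1}_q=\binom{n}{l}_q$ by the $q$-Pascal recursion. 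The boundary terms $l=0$ and $l=n$ are handled automatically by the vanishing convention, since one of the two binomials is absent there. I expect this exponent identity, together with keeping the right variant of the Pascal rule (the one with the $q^{\,n-l}$ weight, dictated by having factored out $a-q^{n-1}z$ rather than $a-z$), to be the main place a direct computation could go astray; everything else is routine.
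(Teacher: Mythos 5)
Your proof is correct. Note that the paper does not actually prove this lemma: it is quoted verbatim from Kassel's book (Proposition IV.2.7 of \emph{Quantum Groups}), so there is no in-paper argument to compare against. Your induction is the standard, self-contained proof, and every step checks out: the base case $n=1$ is right; the exponent identity $\tfrac{(l-1)(l-2)}{2}+(n-1)=\tfrac{l(l-1)}{2}+(n-l)$ is correct (the difference of the triangular numbers is indeed $l-1$); and you use the correct variant of the $q$-Pascal rule, $\binom{n}{l}_q=\binom{n-1}{l}_q+q^{\,n-l}\binom{n-1}{l-1}_q$, which follows as you say from $(n)_q=(n-l)_q+q^{\,n-l}(l)_q$ and is the one forced by peeling off the top factor $a-q^{n-1}z$ (peeling off $a-z$ instead would require the substitution $z\mapsto qz$ in the inductive hypothesis and the companion rule $\binom{n}{l}_q=q^{\,l}\binom{n-1}{l}_q+\binom{n-1}{l-1}_q$). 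The boundary cases $l=0$ and $l=n$ are handled by the vanishing convention as you note. For comparison, Kassel's route in the cited source obtains such product formulas from the noncommutative $q$-binomial theorem for variables satisfying $yx=qxy$, which is itself proved by the same $q$-Pascal induction; your direct induction on the product is, if anything, more elementary, since it avoids introducing $q$-commuting variables and the specialization step. Either way the combinatorial engine is identical, and your proof is a perfectly valid substitute for the citation.
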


\begin{lemma}\label{ce1} With notions defined as above, we have $$\sum_{j=0}^{m-1}\gamma^{-j}\;{]{j-1},{j-1}[}\;=mx^{(m-1)d}.$$
\end{lemma}
\begin{proof}By Lemma \ref{kas}, $\phi_0\cdots \phi_{m-1}=(1-x^{dm}).$ Note that
\begin{align*}
\sum_{j=0}^{m-1}\;{]{j-1},{j-1}[}\;
&=\sum_{j=0}^{m-1}1+\gamma^{-j}x^d+\gamma^{-2j}x^{2d}+\cdots+\gamma^{-(m-1)j}x^{(m-1)d}\\
                                                                 &=m.
\end{align*}
So
\begin{align*}
& \quad\sum_{j=0}^{m-1}\;{]{j-1},{j-1}[}\;-\sum_{j=0}^{m-1}\gamma^{-j}x^d\;{]{j-1},{j-1}[}\;\\
&=\sum_{j=0}^{m-1}(1-\gamma^{-j}x^d)\;{]{j-1},{j-1}[}\;\\
&=\sum_{j=0}^{m-1}\phi_0\phi_{1}\cdots\phi_{m-1}\\
&=m(1-x^{md}).
\end{align*}
Therefore, $\sum_{j=0}^{m-1}\gamma^{-j}\;{]{j-1},{j-1}[}\;=mx^{(m-1)d}$.
\end{proof}

If we omit two items, then we have
\begin{lemma}\label{ce2}$$\sum_{j=0}^{m-1}\gamma^{-j}\;{]{j-2},{j-1}[}\;=0.$$
\end{lemma}
\begin{proof} By Lemma \ref{kas},
\begin{align*}
{]{j-2},{j-1}[}&=(1-\gamma^{-j-1}x^d)(1-\gamma^{-(j+1)-1}x^d)\cdots(1-\gamma^{-(m+j-3)-1}x^d)\\
               &=\sum_{l=0}^{m-2}(-1)^l\binom{m-2}{l}_{\gamma^{-1}}\gamma^{-\tfrac{l(l-1)}{2}}(\gamma^{-j-1}x^d)^l\\
               &=\sum_{l=0}^{m-2}(-1)^l\binom{m-2}{l}_{\gamma^{-1}}\gamma^{-\tfrac{l(l+1)}{2}-lj}x^{ld}.
\end{align*}
So, to get the result, it is sufficient to show that
$$\sum_{j=0}^{m-1}\gamma^{-j}\gamma^{-lj}=\sum_{j=0}^{m-1}\gamma^{-(l+1)j}=0$$
for all $0\leqslant l \leqslant m-2$. Since $1\leqslant l+1 \leqslant m-1$, it is clear that $\sum_{j=0}^{m-1}\gamma^{-(l+1)j}=0$. Thus the conclusion is established.
\end{proof}

As a direct consequence of this lemma, we get the following basic observation.
\begin{corollary}\label{ce3}$$\sum_{j=0}^{m-1}\xi^{2j}\gamma^{-2j}\;{]{j-2},{j-1}[}=0$$ if and only if $\xi^2=\gamma$, where $\xi$ is an element in $k$ satisfying $\xi^m=-1$.
\end{corollary}
\begin{proof} Let $\theta=\xi^2\gamma^{-2}$, then $\theta$ is an $m$th root of 1. By the proof of the above lemma, it
is sufficient to verify that $\theta\gamma^{-s}\neq 1$ for all
$0\leqslant s \leqslant m-2$. It follows that $\theta=\gamma^{-1}$,
i.e., $\xi^2=\gamma$.
\end{proof}

\begin{lemma}\label{ce5} Fix $i$ such that $1\leqslant i\leqslant m-1$ and let $1\leqslant i'\leqslant i$. Then
$$\sum_{j=0}^{m-1}\gamma^{-i'j}\;{]{j-1-i},{j-1}[}=0$$.
\end{lemma}
\begin{proof} Using Lemma \ref{kas}, we have
\begin{align*}
&\quad {]{j-1-i},{j-1}[}\\
&=\phi_{j}\cdots\phi_{m-1}\phi_{0}\cdots\phi_{j-i-2}\\
&=(1-\gamma^{-j-1}x^d)(1-\gamma^{-(j+1)-1}x^d)\cdots(1-\gamma^{-(m+j-i-2)-1}x^d)\\
&=\sum_{l=0}^{m-1-i}(-1)^l\binom{m-1-i}{l}_{\gamma^{-1}}\gamma^{-\tfrac{l(l-1)}{2}}(\gamma^{-j-1}x^d)^l\\
&=\sum_{l=0}^{m-1-i}(-1)^l\binom{m-1-i}{l}_{\gamma^{-1}}\gamma^{-\tfrac{l(l+1)}{2}-lj}x^{ld}.
\end{align*}
Then
\begin{align*}
&\quad\sum_{j=0}^{m-1}\gamma^{-i'j}\;{]{j-1-i},{j-1}[}\\
&=\sum_{j=0}^{m-1}\gamma^{-i'j}\sum_{l=0}^{m-1-i}(-1)^l\binom{m-1-i}{l}_{\gamma^{-1}}
\gamma^{-\tfrac{l(l+1)}{2}-lj}x^{ld}\\
&=\sum_{l=0}^{m-1-i}(-1)^l\binom{m-1-i}{l}_{\gamma^{-1}}\gamma^{-\tfrac{l(l+1)}{2}}x^{ld}
\sum_{j=0}^{m-1}\gamma^{-(i'+l)j}.
\end{align*}
Since $0\leqslant l \leqslant m-1-i$, $1 \leqslant i'\leqslant i'+l \leqslant
m-1-i+i' \leqslant m-1$. Then we have $\sum_{j=0}^{m-1}\gamma^{-(i'+l)j}=0$ for all
$0\leqslant l \leqslant m-1-i$ and $1\leqslant i'\leqslant i$. This ends the proof.
\end{proof}

The next technical result is also needed.
\begin{lemma}\label{ce7} Let $0\leqslant t \leqslant
i+j\leqslant m-1$,  $0\leqslant l \leqslant m-1-i-j$ and let $q$ be a
primitive $m$th root of 1. Then
\begin{align*}
&\quad q^{\frac{(l+t)(l+t+1)}{2}+t(i+j-t)}\cdot(-1)^{l+t}\binom{m-1-t}{l}_q\binom{m-1+t-i-j}{l+t}_q \\
&=\binom{i+j}{t}_q\binom{m-1-i-j}{l}_q.
\end{align*}
\end{lemma}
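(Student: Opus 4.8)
The plan is to strip away the combinatorial packaging and reduce the claim to a single computation with $q$-factorials, using crucially that $q$ is a primitive $m$-th root of $1$. Since $q^m=1$, for $1\leqslant s\leqslant m-1$ we have $q^{m-s}=q^{-s}$, so
\[(m-s)_q=\frac{q^{m-s}-1}{q-1}=\frac{q^{-s}-1}{q-1}=-q^{-s}(s)_q .\]
In particular $(s)_q\neq 0$ for $1\leqslant s\leqslant m-1$, hence each $(a)!_q$ with $0\leqslant a\leqslant m-1$ is a nonzero scalar; since all four Gaussian binomials in the statement have both entries in $\{0,1,\ldots,m-1\}$, every expression below is a well-defined invertible element of $k$ and the divisions are legitimate.

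First I would iterate the reflection identity over $r=1,\ldots,a$ (with $0\leqslant a\leqslant m-1$) to obtain the factorial reflection formula
\[(m-1-a)!_q=\frac{(m-1)!_q}{\prod_{r=1}^{a}(m-r)_q}=(-1)^{a}q^{a(a+1)/2}\,\frac{(m-1)!_q}{(a)!_q}.\]
Writing $N=i+j$ for brevity, I would then expand both binomials on the left and both on the right into $q$-factorials and form the quotient of the two sides. After cancelling the factors $(l)!_q$ and $(m-1-N-l)!_q$ common to both sides, exactly four ``reflected'' factorials remain, namely $(m-1-t)!_q$, $(m-1-(N-t))!_q$, $(m-1-(t+l))!_q$ and $(m-1-N)!_q$, whose upper indices $t$, $N-t$, $t+l$, $N$ all lie in $\{0,\ldots,m-1\}$ (recall $t\leqslant N\leqslant m-1$ and $t+l\leqslant m-1$). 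These are precisely the factorials to which the reflection formula applies.

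Substituting the reflection formula into all four, every copy of $(m-1)!_q$ cancels in pairs, the remaining ``small'' factorials $(t)!_q,(N-t)!_q,(t+l)!_q,(N)!_q$ cancel against their matches, and the accumulated sign collapses to $(-1)^{l+t}$---exactly the sign appearing in the statement. What survives is a single power $q^{E}$ with
\[E=\frac{t(t+1)}{2}+\frac{(N-t)(N-t+1)}{2}-\frac{(t+l)(t+l+1)}{2}-\frac{N(N+1)}{2},\]
so the entire lemma reduces to checking the scalar identity $E=-\big(\tfrac{(l+t)(l+t+1)}{2}+t(N-t)\big)$. This last step is a routine expansion: both sides equal $\tfrac12\big(t^2-l^2-2Nt-2tl-t-l\big)$, and the proof is complete. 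The only real obstacle is bookkeeping---one must track the four independent sign-and-power contributions from the reflection formula without slip---rather than anything conceptual; once they are assembled, the exponent identity is elementary.
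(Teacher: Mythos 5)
Your proof is correct and is essentially the paper's own argument: both expand the four Gaussian binomials into $q$-factorials, form the quotient of the two sides, and reduce everything to the reflection identity $(m-s)_q=-q^{-s}(s)_q$ (stated in the paper as $(m-c)_q=-q^{m-c}(c)_q$, the same thing since $q^m=1$), ending with the same elementary exponent verification. The only difference is bookkeeping: you iterate the identity once into the closed-form factorial formula $(m-1-a)!_q=(-1)^{a}q^{a(a+1)/2}\,(m-1)!_q/(a)!_q$ and substitute it into the four reflected factorials, while the paper cancels the same factors blockwise in two product quotients; the substance is identical.
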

\begin{proof} Since
\begin{align*}
\binom{m-1-t}{l}_q\binom{m-1+t-i-j}{l+t}_q
=\frac{(m-1-t)!_q}{(l)!_q(m-1-t-l)!_q}\cdot
\frac{(m-1+t-i-j)!_q}{(l+t)!_q(m-1-l-i-j)!_q}
\end{align*} and
\begin{align*}
\binom{i+j}{t}_q\binom{m-1-i-j}{l}_q
=\frac{(i+j)!_q}{(t)!_q(i+j-t)!_q}\cdot \frac{(m-1-i-j)!_q}{(l)!_q
(m-1-l-i-j)!_q},
\end{align*} we have
\begin{align*}
&\quad \frac{\binom{m-1-t}{l}_q\binom{m-1+t-i-j}{l+t}_q}{
\binom{i+j}{t}_q\binom{m-1-i-j}{l}_q}\\
&=\frac{(m-l-t)_q(m-l+1-t)_q\cdots (m-1-t)_q}{(t+1)_q(t+2)_q\cdots
(t+l)_q} \cdot \frac{ (m-i-j)_q(m-i-j+1)_q\cdots (m-i-j+t-1)_q
}{(i+j-t+1)_q(i+j-t+2)_q\cdots (i+j)_q }.
\end{align*}
Note that for every number $c$, $0\leqslant c \leqslant m-1$,
\begin{align*}
(m-c)_q&=1+q+\cdots +q^{m-1-c}=-(q^{m-c}+q^{m-c+1}\cdots
+q^{m-1})=-q^{m-c}(1+q+\cdots +q^{c-1})\\
&=-q^{m-c}(c)_q.
\end{align*}
Thus
\begin{align*}
\frac{(m-l-t)_q(m-l+1-t)_q\cdots (m-1-t)_q}{(t+1)_q(t+2)_q\cdots
(t+l)_q}=(-1)^lq^{-\tfrac{l(1+l+2t)}{2}}
\end{align*} and
\begin{align*}
\frac{ (m-i-j)_q(m-i-j+1)_q\cdots (m-i-j+t-1)_q
}{(i+j-t+1)_q(i+j-t+2)_q\cdots (i+j)_q
}=(-1)^tq^{-\tfrac{t(1-t+2(i+j))}{2}}.
\end{align*}
Therefore,
\begin{align*}
\frac{\binom{m-1-t}{l}_q\binom{m-1+t-i-j}{l+t}_q}{
\binom{i+j}{t}_q\binom{m-1-i-j}{l}_q}=(-1)^{l+t}
q^{-\tfrac{l(1+l+2t)+t(1-t+2(i+j))}{2}}=(-1)^{l+t}q^{-\frac{(l+t)(l+t+1)}{2}-t(i+j-t)}.
\end{align*}
This completes the proof.
\end{proof}

\section{New Examples}
In this section, we will introduce a new class of algebras $D(m,d,\xi)$ and show that these algebras are
prime regular Hopf algebras of GK-dimension one. Some properties about $D(m,d,\xi)$ are established, in particular, we will show that $D(m,d,\xi)$ is not a pointed Hopf algebra.

\subsection{}{\bf Definition of the Hopf algebra $D(m, d, \xi)$}.

As before, let $m,d$ be two natural numbers satisfying that $(1+m)d$ is even and $\xi$ a primitive $2m$th root of $1$. Define
$$\omega:=md,\;\;\;\;\gamma:=\xi^2.$$

\noindent $\bullet$ \emph{The algebra structure.}  As an algebra, $D(m, d, \xi)$ is generated by $x^{\pm 1}, g^{\pm 1}, y, u_0, u_1, \cdots,
u_{m-1}$, subject to the following relations
\begin{equation}xx^{-1}=x^{-1}x=1,\ gg^{-1}=g^{-1}g=1,\ xg=gx,\ xy=yx,\ yg=\gamma gy,\ y^m=1-x^\omega=1-g^m,\end{equation}
\begin{equation}xu_i=u_ix^{-1},\ yu_i=\phi_iu_{i+1}=\xi x^d u_i y,\ u_i g=\gamma^i x^{-2d}gu_i,\end{equation}
\begin{equation}\label{eq4.3}u_iu_j=\left \{
\begin{array}{lll} (-1)^{-j}\xi^{-j}\gamma^{\frac{j(j+1)}{2}}\frac{1}{m}x^{-\frac{1+m}{2}d}\phi_i\phi_{i+1}\cdots \phi_{m-2-j}y^{i+j}g, & \;\;\text{if}\ \ i+j \leqslant m-2,\\
(-1)^{-j}\xi^{-j}\gamma^{\frac{j(j+1)}{2}}\frac{1}{m}x^{-\frac{1+m}{2}d}y^{i+j}g, & \;\;\text{if}\ \ i+j=m-1,\\
(-1)^{-j}\xi^{-j}\gamma^{\frac{j(j+1)}{2}}\frac{1}{m}x^{-\frac{1+m}{2}d}\phi_i \cdots \phi_{m-1}\phi_0\cdots
\phi_{m-2-j}y^{i+j-m}g, &
\;\;\textrm{otherwise},
\end{array}\right.\end{equation}
where $\phi_i=1-\gamma^{-i-1}x^d$ and $0 \leqslant i, j \leqslant m-1$.

Since $\gamma=\xi^2$, the expression $(-1)^{-j}\xi^{-j}\gamma^{\frac{j(j+1)}{2}}$ in Equation \eqref{eq4.3} equals $(-1)^{-j}\xi^{j^2}$. We still use this expression $(-1)^{-j}\xi^{-j}\gamma^{\frac{j(j+1)}{2}}$ because it is convenient for the further computations involving coproduct and antipode.

To give a unified expression for the last terrible relation \eqref{eq4.3},
we have the following observations.
On one hand, as observed at the beginning of Section 3, if we still define $\phi_{t}=1-\gamma^{-t-1}x^d$ for any $t\in \mathbbm{Z}$,
then $$\phi_{t}=\phi_{\bar{t}},$$
where $\bar{t}\equiv t$ (mod $m$). For any $i,j\in \mathbbm{Z}$, we have
\noindent \[{]{-1-j},{i-1}[}=\begin{cases}
\phi_{\bar{i}}\cdots \phi_{m-1}\phi_0\cdots\phi_{m-2-\bar{j}}, & \textrm{if}\; \bar{i}+ \bar{j}\geqslant m
\\1, & \textrm{if}\; \bar{i}+\bar{j}=m-1 \\
\phi_{\bar{i}}\cdots \phi_{m-2-\bar{j}}, & \textrm{if}\;
\bar{i}+\bar{j}\leqslant m-2.
\end{cases} \]  by \eqref{eqomit}.

For the convenience of our later computations, the next notion is also useful for us,

\noindent \[{[s,t]}:=\begin{cases}
\phi_{\bar{s}}\phi_{\bar{s}+1}\cdots \phi_{\bar{t}}, & \textrm{if}\; \bar{t}\geqslant \bar{s}
\\1, & \textrm{if}\; \bar{s}=\overline{t}+1 \\
\phi_{\bar{s}}\cdots \phi_{m-1}\phi_{0}\cdots \phi_{\bar{t}}, & \textrm{if}\;
\overline{s}\geqslant \bar{t}+2.
\end{cases} \]
In fact, ${[s,t]}$ can be considered as the resulted polynomial (except the case $\bar{s}=\bar{t}+1$) by preserving all items \emph{from} $\phi_{\overline{s}}$
\emph{to} $\phi_{\overline{t}}$ in $\phi_{0}\phi_{1}\cdots \phi_{m-1}$.
So, by definition, we have
\begin{equation}\label{eqpol}
{[i, m-2-j]}={]{-1-j},{i-1}[}.
\end{equation}

On the other hand, we find that
\begin{equation}\label{eq4.4}(-1)^{-km-j}\xi^{-km-j}\gamma^{\tfrac{(km+j)(km+j+1)}{2}}=(-1)^{-j}\xi^{-j}\gamma^{\tfrac{j(j+1)}{2}}\end{equation}
for any $k\in \mathbbm{Z}$. Therefore, if we define
$$u_{s}:=u_{\overline{s}},$$
then the relation \eqref{eq4.3} can be replaced by
\begin{align}
u_iu_j&=(-1)^{-j}\xi^{-j}\gamma^{\frac{j(j+1)}{2}}\frac{1}{m}x^{-\tfrac{1+m}{2}d}\;{]{-1-j},{i-1}[}\;y^{\overline{i+j}}g\\
   \notag  &=(-1)^{-j}\xi^{-j}\gamma^{\frac{j(j+1)}{2}}\frac{1}{m}x^{-\tfrac{1+m}{2}d}\;{[i, m-2-j]}\;y^{\overline{i+j}}g
\end{align} for all $i, j\in \mathbbm{Z}$.

We give a bigrading on this algebra for use later. Define the
following two algebra automorphisms of $D(m,d,\xi)$:
\[{\Xi_{\pi}^l:}
\begin{cases}
x\longmapsto x, &\\
y\longmapsto y, &\\
g\longmapsto \gamma^{-1}g, &\\
u_i\longmapsto \xi^{-1}u_i, &
\end{cases}
\textrm{and}\;\;\;\; \Xi_{\pi}^r:
\begin{cases}
x\longmapsto x, &\\
y\longmapsto \gamma^{-1}y, &\\
g\longmapsto \gamma^{-1}g, &\\
u_i\longmapsto \xi^{-(2i+1)} u_i.&
\end{cases}\]
It is straightforward to show that $\Xi_{\pi}^l$ and $\Xi_{\pi}^r$ are indeed
algebra automorphisms of $D(m,d,\xi)$ and these automorphisms have order $2m$ by noting that $\xi$ is a primitive $2m$th root of 1 and $u_i\neq 0$ in $D(m,d,\xi)$ for all $i$ (if one $u_i=0$ in $D(m,d,\xi)$, then $y^{\overline{i+j}}g=0$ by \eqref{eq4.3}, which is absurd).
Choosing $\zeta=\xi^{-1}$,
define
$$D_i^l:=\{h\in D(m,d,\xi)|\Xi_{\pi}^l(h)=\zeta^{i}h\},\;\;D_j^r:=\{h\in D(m,d,\xi)|\Xi_{\pi}^r(h)=\zeta^{j}h\}$$
for $0\leqslant i,j\leqslant 2m-1$. Direct computations show that
\[{D_i^l=}
\begin{cases}
k\langle x^{\pm 1}, y\rangle g^{\tfrac{i}{2}}, & i=\textrm{even},\\
\sum_{s=0}^{m-1}k[ x^{\pm 1}] g^{\tfrac{i-1}{2}}u_s,
& i=\textrm{odd},
\end{cases}\]
and \[ D_j^r=
\begin{cases}
k\langle x^{\pm 1}, yg^{-1}\rangle g^{\tfrac{j}{2}}, & j=\textrm{even},\\
\sum_{s=0}^{m-1}k[x^{\pm 1}]
g^su_{\tfrac{j-1}{2}-s}, & j=\textrm{odd}.
\end{cases}\]
Therefore \begin{equation}\label{eqDij} D_{ij}:=D_i^l\cap D_j^r=
\begin{cases}
k[x^{\pm 1}]y^{\frac{j-i}{2}}g^{\frac{i}{2}}, & i, j=\textrm{even},\\
k[x^{\pm 1}]g^{\tfrac{i-1}{2}}u_{\tfrac{j-i}{2}}, & i, j=\textrm{odd},\\
0, & \textrm{otherwise}.
\end{cases}\end{equation}
Since $\sum_{i,j}D_{ij}=D(m,d,\xi)$, we have
\begin{equation}\label{eq4.6}D(m,d,\xi)=\bigoplus_{i,j=0}^{2m-1}D_{ij}\end{equation}
 which is a bigrading on $D(m,d,\xi)$ automatically.

 Let $D:= D(m,d,\xi)$, then $D\otimes D$ is graded naturally by inheriting the grading defined above. In particular,
 for any $h\in D\otimes D$,  we use
 $$h_{(s_1,t_1)\otimes (s_2,t_2)}$$
 to denote the homogeneous part of $h$ in $D_{s_{1},t_{1}}\otimes D_{s_{2},t_{2}} $. This notion will be used freely
 in the proof of  Proposition \ref{p4.2}.

\noindent $\bullet$ \emph{The coalgebra structure and the antipode.} The coproduct $\D$, the counit $\epsilon$ and the antipode $S$ of $D(m,d,\xi)$ are given by
$$\D(x)=x\otimes x,\;\; \D(g)=g\otimes g, \;\;\D(y)=y\otimes g+1\otimes y,$$
$$\D(u_i)=\sum_{j=0}^{m-1}\gamma^{j(i-j)}u_j\otimes x^{-jd}g^ju_{i-j};$$
$$\epsilon(x)=\epsilon(g)=\epsilon(u_0)=1,\;\;\epsilon(y)=\epsilon(u_s)=0;$$
$$S(x)=x^{-1},\;\; S(g)=g^{-1}, \;\;S(y)=-yg^{-1},$$
\begin{equation}\label{eq4.8}S(u_i)=(-1)^i\xi^{-i}\gamma^{-\frac{i(i+1)}{2}}x^{id+\frac{3}{2}(1-m)d}g^{m-i-1}u_i,\end{equation}
for $0\leq i\leq m-1$ and $1\leqslant s\leqslant m-1$.

Since $g^m=x^{md}$ and \eqref{eq4.4} , the definition about $S(u_{i})$ still holds for any integer $i$, that is, \eqref{eq4.8}
can be replaced in a more convenient way:
\begin{equation}S(u_s)=(-1)^s\xi^{-s}\gamma^{-\frac{s(s+1)}{2}}x^{sd+\frac{3}{2}(1-m)d}g^{m-s-1}u_s
\end{equation}
for all $s\in \mathbbm{Z}$.

\begin{remark}\label{r4.1}  Recall that $\xi^2=\gamma$. It is not hard to see that the subalgebra of $D(m, d, \xi)$ generated by $x^{\pm 1}, g^{\pm 1}, y$ is exact the generalized Liu algebra $B(m, \omega, \gamma)$. Indeed, by Equations \eqref{eqDij} and \eqref{eq4.6}, $B_{ij}=D_{2i, 2j}$ for all $0\leqslant i, j\leqslant m-1$. By definition, $D(m, d, \xi)$ is affine. Moreover, $D(m, d, \xi)$ is a finitely generated $k[x^{\pm 1}]$-module by Equation \eqref{eqDij}. Thus $D(m, d, \xi)$ has GK-dimension one. At the same time, note that $Z=k[z_s|z_s:=x^s+x^{-s}, s\in \mathbb{N}_0]$ lies in the center of $D(m, d, \xi)$, which implies that $D(m, d, \xi)$ is PI. In one word, $D(m, d, \xi)$ is affine, PI and has GK-dimension one, and contains $B(m, \omega, \gamma)$ as a Hopf subalgebra.
\end{remark}

\subsection{}{\bf $D(m, d, \xi)$ is a Hopf algebra}.

The main aim of this subsection is to show that $D(m,d,\xi)$ is indeed a Hopf algebra.

\begin{proposition} \label{p4.2}The algebra $D(m,d,\xi)$ defined above is a Hopf algebra.
\end{proposition}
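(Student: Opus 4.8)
To show that $D(m,d,\xi)$ is a Hopf algebra, I must verify three things: that $\Delta$ and $\epsilon$ extend to algebra homomorphisms $D \to D \otimes D$ and $D \to k$ respectively (so that $D$ is a bialgebra), that $\Delta$ is coassociative and counital, and that $S$ is an antipode. The coassociativity and counit axioms together with the antipode axiom only need to be checked on the algebra generators $x^{\pm 1}, g, y, u_0, \ldots, u_{m-1}$, because these are coalgebra-theoretic identities that propagate multiplicatively once $\Delta, \epsilon, S$ are known to be (anti)homomorphisms. The genuinely substantial work is proving that $\Delta$ respects \emph{all} the defining relations of the algebra, and among these the only hard one is the relation \eqref{eq4.3} expressing $u_iu_j$ in terms of $y$'s and $g$.

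\emph{First} I would dispose of the easy structural maps. The subalgebra generated by $x^{\pm 1}, g, y$ is the generalized Liu algebra $B(m,\omega,\gamma)$, which is already known to be a Hopf algebra; hence $\Delta, \epsilon, S$ are automatically compatible with all relations among $x,g,y$, and I only need to handle the relations involving the $u_i$. For each of the mixed relations $xu_i = u_ix^{-1}$, $yu_i = \phi_i u_{i+1} = \xi x^d u_i y$, and $u_ig = \gamma^i x^{-2d} g u_i$, I would apply $\Delta$ to both sides using the given $\Delta(u_i) = \sum_j \gamma^{j(i-j)} u_j \otimes x^{-jd}g^j u_{i-j}$ and check equality in $D \otimes D$; these are finite computations that use the already-established commutation rules in each tensor factor. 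The counit relations are immediate from the stated values of $\epsilon$.

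\emph{Second}, and this is where the combinatorial lemmas of Section 3 enter, I would verify that $\Delta(u_i u_j) = \Delta(u_i)\Delta(u_j)$ matches $\Delta$ applied to the right-hand side of \eqref{eq4.3}. Expanding $\Delta(u_i)\Delta(u_j)$ produces a double sum $\sum_{p,q} \gamma^{p(i-p)+q(j-q)} (u_p \otimes x^{-pd}g^p u_{i-p})(u_q \otimes x^{-qd}g^q u_{j-q})$; in each tensor slot one must again collapse a product $u_{\bullet}u_{\bullet}$ via \eqref{eq4.3}, producing the polynomials $\phi_s \sim \phi_t$. The bigrading \eqref{eq4.6} is the organizing tool here: both sides lie in a direct sum of the $D_{s_1,t_1}\otimes D_{s_2,t_2}$, and by comparing homogeneous components $h_{(s_1,t_1)\otimes(s_2,t_2)}$ separately I can reduce the identity to a family of scalar-polynomial identities. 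Those identities are \emph{precisely} the combinatorial equations proved in Lemmas \ref{ce1}, \ref{ce2}, \ref{ce5} and \ref{ce7} and Corollaries \ref{ce3}, \ref{ce6} — the alternating sums $\sum_j \gamma^{-j}\phi_0\cdots\widehat{\phi}\cdots\phi_{m-1}$ and the Gaussian-binomial reindexing of Lemma \ref{ce7}. I expect that matching the two expansions of $\Delta(u_iu_j)$, after using $g^m = x^{md}$, $\gamma = \xi^2$, $\xi^{2m}=-1$ and the normalization $\tfrac{1}{m}$, is \textbf{the main obstacle}: the prefactors $(-1)^{-j}\xi^{-j}\gamma^{j(j+1)/2}$ and the products $\phi_i\sim\phi_{m-2-j}$ must reassemble exactly, and this is where the identity $\xi^2=\gamma$ in Corollary \ref{ce3} and the binomial symmetry of Lemma \ref{ce7} are indispensable.

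\emph{Finally}, having established that $D$ is a bialgebra, I would check coassociativity of $\Delta$ on each generator — nontrivial only for $u_i$, where $(\Delta \otimes \id)\Delta(u_i)$ and $(\id \otimes \Delta)\Delta(u_i)$ both unfold into triple sums that agree after the substitution $g^m = x^{md}$ — and then verify the antipode axiom $\sum S(h_1)h_2 = \epsilon(h)\unit = \sum h_1 S(h_2)$ on generators, the case $h = u_i$ requiring the explicit formula \eqref{eq4.8} together with, once more, the summation identity of Lemma \ref{ce1} to collapse $\sum_j S(u_j)\,x^{-jd}g^j u_{i-j}$ to a multiple of $\epsilon(u_i)\unit$. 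Throughout, the convention $u_s := u_{\bar s}$, $y^s := y^{\bar s}$ and the periodicity \eqref{eq4.4} let me treat all indices as integers mod $m$ without case splitting, which keeps the bookkeeping uniform.
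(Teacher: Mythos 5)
Your outline reproduces the paper's own strategy almost step for step: dispose of the Liu subalgebra $B(m,\omega,\gamma)$ first, check the mixed relations under $\Delta$, isolate $\Delta(u_iu_j)=\Delta(u_i)\Delta(u_j)$ as the core difficulty and attack it bidegree-by-bidegree via the bigrading \eqref{eq4.6} with Lemma \ref{ce7} supplying the Gaussian-binomial identity, then do coassociativity/counit and the antipode axiom on generators. There is, however, one genuine gap: nowhere in your scheduled work do you verify that $S$ is a \emph{well-defined algebra anti-homomorphism}, i.e.\ that the assignment \eqref{eq4.8} on generators kills the defining relations in reversed order. This is not subsumed by anything else in your plan: the formulas for $S$ are given only on generators, so without this check $S$ does not exist as a map on $D$, and your own propagation argument for the antipode axiom (``once $\Delta,\epsilon,S$ are known to be (anti)homomorphisms'') explicitly presupposes it. In the paper this is Step 3, and it is a computation comparable in bulk to the coproduct check: one must verify $S(u_{i+1})S(\phi_i)=S(u_i)S(y)=\xi S(y)S(u_i)S(x^d)$ and, harder, $S(u_iu_j)=S(u_j)S(u_i)$, which hinges on the auxiliary polynomials $\overline{\phi_s}:=1-\gamma^{-s-1}x^{-d}$, the identity $x^d\overline{\phi_s}=-\gamma^{-s-1}\phi_{m-s-2}$, the relation $g^m=x^{md}$, and the periodicity \eqref{eq4.4}.

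Two smaller misattributions would also surface when you execute the plan. First, the matching of $\Delta(u_iu_j)$ against $\Delta(u_i)\Delta(u_j)$ consumes only Lemma \ref{ce7} together with the elementary vanishing of the geometric sums $\sum_{s=0}^{m-1}\gamma^{(r-l-t)s}$ when $r-l-t\not\equiv 0 \pmod m$; Corollary \ref{ce3} plays no role in this proposition at all, since here $\gamma:=\xi^2$ holds by definition (the corollary earns its keep only in Section 8, where $\xi^2=\gamma$ must be deduced rather than decreed). Second, for the antipode axiom your final paragraph invokes only Lemma \ref{ce1}, but that settles just the case $u_0$, where $(S*\Id)(u_0)=(\Id*S)(u_0)=1$; for $1\leqslant i\leqslant m-1$ one needs $(S*\Id)(u_i)$ and $(\Id*S)(u_i)$ to vanish outright (as $\epsilon(u_i)=0$), and these are precisely Lemma \ref{ce5} and Corollary \ref{ce6} respectively. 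With Step 3 inserted and the lemmas deployed in these places, your outline becomes the paper's proof.
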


\noindent\emph{Proof.} The proof is standard but not easy. For completeness and the convenience of the reader,
we give the proof here.  As usual, we decompose the proof into several steps.  Since the subalgebra generated by
$x^{\pm 1}, y, g$ is just the generalized Liu algebra $B(m, \omega, \gamma)$, which is a Hopf algebra already, we only need to verify the related relations in $D(m,d,\xi)$ where $u_i$ are involved.

\noindent $\bullet$ \emph{Step} 1 ($\D$ and $\epsilon$ are algebra
homomorphisms).

First of all, it is clear that $\epsilon$ is an algebra
homomorphism. Since $x$ and $g$ are group-like elements, the
verifications of $\D(x)\D(u_i)=\D(u_i)\D(x^{-1})$ and
$\D(u_i)\D(g)=\gamma^{i}\D(x^{-2d})\D(g)\D(u_i)$ are simple and so they are omitted.

\noindent (1) \emph{The proof of} $\D(\phi_i)\D(u_{i+1})=\D(y)\D(u_i)=\xi\D(x^d)\D(u_i)\D(y)$.

By definition $\D(u_i)=\sum_{j=0}^{m-1}\gamma^{j(i-j)}u_j\otimes
x^{-jd}g^ju_{i-j}$ for all $0 \leqslant i \leqslant m-1$, we have
\begin{align*}
\D(\phi_i)\D(u_{i+1})&=(1\otimes 1 - \gamma^{-i-1}x^d \otimes x^d)\sum_{j=0}^{m-1}\gamma^{j(i+1-j)}u_j\otimes x^{-jd}g^ju_{i+1-j}\\
                 &=\sum_{r=0}^{m-1}\gamma^{r(i+1-r)}u_r\otimes x^{-rd}g^ru_{i+1-r}-\sum_{l=0}^{m-1}\gamma^{l(i+1-l)-i-1}x^du_l\otimes x^{(1-l)d}g^ju_{i+1-l}.
\end{align*}
And
\begin{align*}
\D(y)\D(u_i)&=(y\otimes g +1\otimes y)\sum_{j=0}^{m-1}\gamma^{j(i-j)}u_j\otimes x^{-jd}g^ju_{i-j}\\
            &=\sum_{l=0}^{m-1}\gamma^{l(i-l)}yu_l\otimes x^{-ld}g^{l+1}u_{i-l}+ \sum_{r=0}^{m-1}\gamma^{r(i-r)}u_r\otimes yx^{-rd}g^ru_{i-r}\\
            &=\sum_{l=0}^{m-1}\gamma^{l(i-l)}u_{l+1}\otimes x^{-ld}g^{l+1}u_{i-l}- \sum_{l=0}^{m-1}\gamma^{l(i-l)-l-1}x^du_{l+1}\otimes
            x^{-ld}g^{l+1}u_{i-l}\\
            &\quad +\sum_{r=0}^{m-1}\gamma^{r(i-r+1)}u_{r}\otimes x^{-rd}g^{r}u_{i+1-r}- \sum_{r=0}^{m-1}\gamma^{(r-1)(i+1-r)}u_{r}\otimes
            x^{(1-r)d}g^{r}u_{i+1-r}\\
            &=\sum_{r=0}^{m-1}\gamma^{r(i-r+1)}u_{r}\otimes x^{-rd}g^{r}u_{i+1-r}- \sum_{l=0}^{m-1}\gamma^{l(i-l)-l-1}x^du_{l+1}\otimes
            x^{-ld}g^{l+1}u_{i-l}.\end{align*}
Hence $\D(\phi_i)\D(u_{i+1})=\D(y)\D(u_i)$. Similarly,
\begin{align*}
\xi\D(x^d)\D(u_i)\D(y)&=\xi\cdot(x^d\otimes x^d)(\sum_{j=0}^{m-1}\gamma^{j(i-j)}u_j\otimes x^{-jd}g^ju_{i-j})(y\otimes g +1\otimes y)\\
            &=\sum_{s=0}^{m-1}\gamma^{s(i-s)}yu_s\otimes x^{(1-s)d}g^{s}u_{i-s}g+ \sum_{t=0}^{m-1}\gamma^{t(i-t)}x^du_t\otimes x^{-td}g^tyu_{i-t}\\
            &=\sum_{s=0}^{m-1}\gamma^{(s+1)(i-s)}u_{s+1}\otimes x^{-(s+1)d}g^{s+1}u_{i-s}- \sum_{s=0}^{m-1}\gamma^{(s+1)(i-s-1)}x^du_{s+1}\otimes x^{-(s+1)d}g^{s+1}u_{i-s}\\
            &\quad +\sum_{t=0}^{m-1}\gamma^{t(i-t)}x^du_t\otimes x^{-td}g^tu_{i+1-t}- \sum_{t=0}^{m-1}\gamma^{(t-1)(i-t)-1}x^du_t\otimes x^{(1-t)d}g^tu_{i+1-t}\\
            &=\sum_{s=0}^{m-1}\gamma^{(s+1)(i-s)}u_{s+1}\otimes x^{-(s+1)d}g^{s+1}u_{i-s}- \sum_{t=0}^{m-1}\gamma^{(t-1)(i-t)-1}x^du_t\otimes x^{(1-t)d}g^tu_{i+1-t},\end{align*}
   which equals $\D(\phi_i)\D(u_{i+1})$ clearly.

\noindent (2)\emph{ The proof of} $\D(u_iu_j)=\D(u_i)\D(u_j)$.

We have that
\begin{align*}
\D(u_i)\D(u_j)&=\sum_{s=0}^{m-1}\gamma^{s(i-s)}u_s\otimes x^{-sd}g^su_{i-s}\cdot \sum_{t=0}^{m-1}\gamma^{t(j-t)}u_t\otimes x^{-td}g^tu_{j-t}\\
              &=\sum_{t=0}^{m-1}\sum_{s=0}^{m-1}\gamma^{s(i-s)}u_s\gamma^{(t-s)(j-t+s)}u_{t-s}\otimes x^{-sd}g^su_{i-s}x^{-(t-s)d}g^{t-s}u_{j-t+s}\\
              &=\sum_{t=0}^{m-1}\sum_{s=0}^{m-1}\gamma^{(t-s)(j-t+s)+(i-s)t}u_su_{t-s}\otimes x^{-td}g^tu_{i-s}u_{j-t+s}.
\end{align*}
By the bigrading given in \eqref{eq4.6}, we can find that for each $0\leqslant t \leqslant m-1$,
$$\sum_{s=0}^{m-1}\gamma^{(t-s)(j-t+s)+(i-s)t}u_su_{t-s}\otimes
x^{-td}g^tu_{i-s}u_{j-t+s}\in D_{2, 2+2t}\otimes D_{2+2t,
2+2(i+j)},$$ where the suffixes in $ D_{2, 2+2t}\otimes D_{2+2t,
2+2(i+j)}$ are interpreted mod $2m$.

Note that
$$u_su_{t-s}=(-1)^{-(t-s)}\xi^{-(t-s)}\gamma^{\frac{(t-s)(t-s+1)}{2}}\frac{1}{m}x^{-\frac{1+m}{2}d}\;{[s, m-2-t+s]}\; y^{t}g$$
and
$$u_{i-s}u_{j-t+s}=(-1)^{-(j-t+s)}\xi^{-(j-t+s)}\gamma^{\frac{(j-t+s)(j-t+s+1)}{2}}\frac{1}{m}x^{-\frac{1+m}{2}d}\;{[i-s, m-2+t-j-s]}\;y^{\overline{i+j-t}}g.$$

Using Lemma \ref{kas}, we get
\begin{align*}
\;{[s, m-2-t+s]}&=(1-\gamma^{-s-1}x^d)(1-\gamma^{-s-2}x^d)\cdots (1-\gamma^{-(m-2-t+s)-1}x^d)\\
           &=\sum_{l=0}^{m-1-t}(-1)^l\binom{m-1-t}{l}_{\gamma^{-1}}\gamma^{-\tfrac{l(l-1)}{2}}(\gamma^{-s-1}x^d)^l\\
           &=\sum_{l=0}^{m-1-t}(-1)^l\binom{m-1-t}{l}_{\gamma^{-1}}
           \gamma^{-\tfrac{l(l+1)}{2}-sl}x^{ld},
\end{align*} and
\begin{align*}
\;{[i-s, m-2+t-j-s]}&=(1-\gamma^{-(i-s)-1}x^d)(1-\gamma^{-(i-s+1)-1}x^d)\cdots (1-\gamma^{-(i-s+m-2-\overline{i+j-t})-1}x^d)\\
           &=\sum_{r=0}^{m-1-\overline{i+j-t}}(-1)^r\binom{m-1-\overline{i+j-t}}{r}_{\gamma^{-1}}\gamma^{-\tfrac{r(r-1)}{2}}(\gamma^{s-i-1}x^d)^r\\
           &=\sum_{r=0}^{m-1-\overline{i+j-t}}(-1)^r\binom{m-1-\overline{i+j-t}}{r}_{\gamma^{-1}}\gamma^{-\tfrac{r(r+1)}{2}+(s-i)r} x^{rd}.
\end{align*}

Then for each $0\leqslant t \leqslant m-1$,
\begin{align*}
&\quad \D(u_i)\D(u_j)_{{(2, 2+2t)}\otimes {(2+2t,2+2(i+j))}}\\
&=\sum_{s=0}^{m-1}\gamma^{(t-s)(j-t+s)+(i-s)t}u_su_{t-s}\otimes x^{-td}g^tu_{i-s}u_{j-t+s}\\
&=\sum_{s=0}^{m-1}\gamma^{(t-s)(j-t+s)+(i-s)t}(-1)^{-(t-s)}\xi^{-(t-s)}\gamma^{\frac{(t-s)(t-s+1)}{2}}\frac{1}{m}x^{-\frac{1+m}{2}d}\;{[s, m-2-t+s]}\;y^{t}g\\
&\quad\quad\otimes x^{-td}g^t(-1)^{-(j-t+s)}\xi^{-(j-t+s)}\gamma^{\frac{(j-t+s)(j-t+s+1)}{2}}\frac{1}{m}x^{-\frac{1+m}{2}d}\;{[i-s, m-2+t-j-s]}\;y^{\overline{i+j-t}}g\\
&=(-1)^{-j}\xi^{-j}\frac{1}{m^2}(\sum_{s=0}^{m-1}\gamma^{\frac{j^2+j}{2}+(i-s)t-t(i+j-t)}\;{[s, m-2-t+s]}\;\otimes x^{-td}\;{[i-s, m-2+t-j-s]}\;)\\
&\quad \quad ( x^{-\frac{1+m}{2}d}y^{t}g\otimes x^{-\frac{1+m}{2}d}y^{\overline{i+j-t}}g^{t+1})\\
&=(-1)^{-j}\xi^{-j}\frac{1}{m^2}\sum_{l=0}^{m-1-t} \sum_{r=0}^{m-1-\overline{i+j-t}}\gamma^{\frac{j(j+1)-l(l+1)-r(r+1)}{2}-t(j-t)-ir} (-1)^{l+r}\binom{m-1-t}{l}_{\gamma^{-1}}\\
&\quad\quad\binom{m-1-\overline{i+j-t}}{r}_{\gamma^{-1}} \sum_{s=0}^{m-1}\gamma^{(r-l-t)s}\cdot (x^{ld} \otimes  x^{(r-t)d}) \cdot( x^{-\frac{1+m}{2}d}y^{t}g\otimes x^{-\frac{1+m}{2}d}y^{\overline{i+j-t}}g^{t+1}).
\end{align*}

Meanwhile,
$u_iu_j=(-1)^{-j}\xi^{-j}\gamma^{\frac{j(j+1)}{2}}\frac{1}{m}x^{-\frac{1+m}{2}d}\;{[i, m-2-j]}\;y^{\overline{i+j}}g$. Since
\begin{align*}
\D(y^{\overline{i+j}})&=(1\otimes y+ y\otimes g)^{\overline{i+j}}\\
           &=\sum_{t=0}^{\overline{i+j}}\binom{\overline{i+j}}{t}_{\gamma^{-1}}(1\otimes y)^{\overline{i+j}-t}\cdot (y\otimes g)^t\\
           &=\sum_{t=0}^{\overline{i+j}}\binom{\overline{i+j}}{t}_{\gamma^{-1}}y^t\otimes y^{\overline{i+j}-t}g^t
\end{align*}
and
\begin{align*}
\D({[i, m-2-j]})&=(1\otimes 1 - \gamma^{-i-1}x^d\otimes x^d)
(1\otimes 1 - \gamma^{-i-2}x^d\otimes x^d)\cdots (1\otimes 1 - \gamma^{-(i+m-2-\overline{i+j})-1}x^d\otimes x^d)\\
           &=\sum_{l=0}^{m-1-\overline{i+j}}(-1)^l\binom{m-1-\overline{i+j}}{l}_{\gamma^{-1}}\gamma^{-\tfrac{l(l-1)}{2}}(\gamma^{-i-1}x^d\otimes x^d)^l\\
           &=\sum_{l=0}^{m-1-\overline{i+j}}(-1)^l\binom{m-1-\overline{i+j}}{l}_{\gamma^{-1}}\gamma^{-\tfrac{l(l+1)}{2}-il}\cdot x^{ld}\otimes x^{ld},
\end{align*}
we get
\begin{align*}
\D(u_iu_j)&=\D((-1)^{-j}\xi^{-j}\gamma^{\frac{j(j+1)}{2}}\frac{1}{m}x^{-\frac{1+m}{2}d}\;{[i, m-2-j]}\;y^{\overline{i+j}}g)\\
          &=(-1)^{-j}\xi^{-j}\gamma^{\frac{j(j+1)}{2}}\frac{1}{m} \D(x^{-\frac{1+m}{2}d})\D(\;{[i, m-2-j]}\;)\D(y^{\overline{i+j}})\D(g)\\
          &=(-1)^{-j}\xi^{-j}\gamma^{\frac{j(j+1)}{2}}\frac{1}{m} \sum_{l=0}^{m-1-\overline{i+j}}(-1)^l\binom{m-1-\overline{i+j}}{l}_{\gamma^{-1}}\gamma^{-\tfrac{l(l+1)}{2}-il} \sum_{t=0}^{\overline{i+j}}\binom{\overline{i+j}}{t}_{\gamma^{-1}} \\
          &\quad\quad (x^{-\frac{1+m}{2}d}\otimes x^{-\frac{1+m}{2}d})\cdot (x^{ld}\otimes x^{ld})\cdot (y^t\otimes y^{\overline{i+j-t}}g^t)\cdot (g\otimes g)\\
          &=(-1)^{-j}\xi^{-j}\gamma^{\frac{j(j+1)}{2}}\frac{1}{m} \sum_{t=0}^{\overline{i+j}}\cdot \sum_{l=0}^{m-1-\overline{i+j}}(-1)^l \binom{\overline{i+j}}{t}_{\gamma^{-1}}\binom{m-1-\overline{i+j}}{l}_{\gamma^{-1}}\gamma^{-\tfrac{l(l+1)}{2}-il} \\
          &\quad\quad (x^{ld}\otimes x^{ld})\cdot (x^{-\frac{1+m}{2}d}y^tg \otimes x^{-\frac{1+m}{2}d}y^{\overline{i+j-t}}g^{t+1}).
\end{align*}
For each $0\leqslant t \leqslant \overline{i+j}$, $(x^{ld}\otimes x^{ld})\cdot
(x^{-\frac{1+m}{2}d}y^tg \otimes
x^{-\frac{1+m}{2}d}y^{\overline{i+j-t}}g^{t+1}) \in D_{2, 2+2t}\otimes D_{2+2t,
2+2(i+j)}$ for any  $l$. So,
\begin{align*}
&\quad\D(u_iu_j)_{{(2, 2+2t)}\otimes {(2+2t,2+2(i+j))}}\\
&=(-1)^{-j}\xi^{-j}\gamma^{\frac{j(j+1)}{2}}\frac{1}{m} \sum_{l=0}^{m-1-\overline{i+j}}(-1)^l \binom{\overline{i+j}}{t}_{\gamma^{-1}}\binom{m-1-\overline{i+j}}{l}_{\gamma^{-1}}\gamma^{-\tfrac{l(l+1)}{2}-il} \\
&\quad\quad (x^{ld}\otimes x^{ld})\cdot
(x^{-\frac{1+m}{2}d}y^tg \otimes
x^{-\frac{1+m}{2}d}y^{\overline{i+j-t}}g^{t+1}).
\end{align*}

By the graded structure of $D\otimes D$,
$\D(u_i)\D(u_j)=\D(u_iu_j)$ if and only if
\begin{equation}\label{4.10}\D(u_i)\D(u_j)_{{(2, 2+2t)}\otimes {(2+2t,2+2(i+j))}}=0\end{equation} for
all $\overline{i+j}+1\leqslant t \leqslant m-1$ and
\begin{equation}\label{4.11}\D(u_iu_j)_{{(2, 2+2t)}\otimes
{(2+2t,2+2(i+j))}}=\D(u_i)\D(u_j)_{{(2, 2+2t)}\otimes
{(2+2t,2+2(i+j))}} \end{equation} for all $0\leqslant t \leqslant
\overline{i+j}$.

By the expression of $\D(u_i)\D(u_j)_{{(2, 2+2t)}\otimes {(2+2t,2+2(i+j))}}$, we can find that
it is zero if $\sum\limits_{s=0}^{m-1}\gamma^{(r-l-t)s}$ $=0$. Note that in the case of $\overline{i+j}+1\leqslant t \leqslant m-1$, $m-1-\overline{i+j-t}=t-1-\overline{i+j}$. So $0\leqslant r\leqslant t-1-\overline{i+j}$ and thus $1-m \leqslant r-l-t \leqslant -1-\overline{i+j}$. This means that in this case we always have
$$\sum_{s=0}^{m-1}\gamma^{(r-l-t)s}=0,$$ which
implies \eqref{4.10}.

Now let $0\leqslant t \leqslant \overline{i+j}$. Then $1-m \leqslant r-l-t \leqslant m-1-\overline{i+j-t}-t<m$.
As discussed above, $\D(u_i)\D(u_j)_{{(2,
2+2t)}\otimes {(2+2t,2+2(i+j))}}=0$ if $r-l-t\neq 0$.  So we only need to verify the case when $r=l+t$. At this time,
$0\leqslant l \leqslant m-1-\overline{i+j}$. Then \eqref{4.11} holds if and
only if
\begin{align*}
&\quad\gamma^{-\frac{(l+t)(l+t+1)}{2}-t(i+j-t)}\cdot(-1)^{l+t}\binom{m-1-t}{l}_{\gamma^{-1}}\binom{m-1-\overline{i+j-t}}{l+t}_{\gamma^{-1}} \\
&=\binom{\overline{i+j}}{t}_{\gamma^{-1}}\binom{m-1-\overline{i+j}}{l}_{\gamma^{-1}},
\end{align*} which is just Lemma \ref{ce7} by setting $q=\gamma^{-1}$.\\

\noindent{$\bullet$ \emph{Step} 2 (Coassociative and couint).

Indeed, for each $0\leqslant i\leqslant m-1$
\begin{align*}
(\D\otimes \Id)\D(u_i)&=(\D\otimes \Id)(\sum_{j=0}^{m-1}\gamma^{j(i-j)}u_j\otimes x^{-jd}g^ju_{i-j})\\
                     &=\sum_{j=0}^{m-1}\gamma^{j(i-j)}(\sum_{s=0}^{m-1}\gamma^{s(j-s)}u_s\otimes x^{-sd}g^su_{j-s})\otimes x^{-jd}g^ju_{i-j}\\
                     &=\sum_{j,s=0}^{m-1}\gamma^{j(i-j)+s(j-s)}u_s\otimes x^{-sd}g^su_{j-s}\otimes x^{-jd}g^ju_{i-j},
\end{align*}
and
\begin{align*}
(\Id\otimes \D)\D(u_i)&=(\Id\otimes \D)(\sum_{s=0}^{m-1}\gamma^{s(i-s)}u_s\otimes x^{-sd}g^su_{i-s})\\
                     &=\sum_{s=0}^{m-1}\gamma^{s(i-s)}
                     u_s\otimes (\sum_{t=0}^{m-1}\gamma^{t(i-s-t)}x^{-sd}g^su_{t}\otimes x^{-sd}g^s x^{-td}g^{t}u_{i-s-t})\\
                     &=\sum_{s,t=0}^{m-1}\gamma^{s(i-s)+t(i-s-t)}u_s\otimes x^{-sd}g^su_{t}\otimes x^{-(s+t)d}g^{(s+t)}u_{i-s-t}.
\end{align*}

It is not hard to see that $(\D\otimes \Id)\D(u_i)=(\Id\otimes
\D)\D(u_i)$ for all $0\leqslant i \leqslant m-1$.  The verification of $(\epsilon\otimes \Id)\D(u_i)=(\Id\otimes
\epsilon)\D(u_i)=u_i$ is easy and it is omitted.\\

\noindent $\bullet$ \emph{Step} 3 (Antipode is an algebra anti-homomorphism).

Because $x$ and $g$ are group-like elements, we only check
$$S(u_{i+1})S(\phi_i)=S(u_i)S(y)=\xi S(y)S(u_i)S(x^d)$$ and
$$S(u_iu_j)=S(u_j)S(u_i)$$ here.

\noindent (1) \emph{The proof of} $S(u_{i+1})S(\phi_i)=S(u_i)S(y)=\xi S(y)S(u_i)S(x^d)$.

Since $u_iS(\phi_j)=\phi_ju_i$ for all $i, j$,
$$S(u_{i+1})S(\phi_i)=(-1)^{i+1}\xi^{-(i+1)}\gamma^{-\frac{(i+1)(i+2)}{2}}x^{(i+1)d+\frac{3}{2}(1-m)d}g^{m-i-2}u_{i+1}S(\phi_i)=\phi_iS(u_{i+1}).$$

Through direct calculation, we have
\begin{align*}
S(u_i)S(y)&=(-1)^i\xi^{-i}\gamma^{-\frac{i(i+1)}{2}}x^{id+
\frac{3}{2}(1-m)d}g^{m-i-1}u_i\cdot (-yg^{-1})\\
         &=(-1)^{i+1}\xi^{-(i+1)}\gamma^{-\frac{i(i+1)}{2}}x^{(i-1)d+
         \frac{3}{2}(1-m)d}g^{m-i-1}yu_{i}g^{-1}\\
         &=\phi_i\cdot(-1)^{i+1}\xi^{-(i+1)}
         \gamma^{-\frac{(i+1)(i+2)}{2}}x^{(i+1)d+\frac{3}{2}(1-m)d}g^{m-i-2}u_{i+1}\\
         &=\phi_iS(u_{i+1}),
\end{align*}
and
\begin{align*}
\xi S(y)S(u_i)S(x^d)&=-\xi y g^{-1}\cdot (-1)^i\xi^{-i}\gamma^{-\frac{i(i+1)}{2}}x^{id+\frac{3}{2}(1-m)d}g^{m-i-1}u_ix^{-d}\\
         &=(-1)^{i+1}\xi^{-(i+1)}\gamma^{-\frac{(i+1)(i+2)}{2}}x^{(i+1)d+\frac{3}{2}(1-m)d}g^{m-i-2}yu_{i}\\
         &=\phi_iS(u_{i+1}).
\end{align*}

\noindent (2) \emph{The proof of}  $S(u_iu_j)=S(u_j)S(u_i)$.

Define $\overline{\phi_s}:=1-\gamma^{-s-1}x^{-d}$ for all $s\in \mathbb{Z}$. Using this notion,
$$x^d\overline{\phi_s}=x^d(1-\gamma^{-s-1}x^{-d})=-\gamma^{-s-1}(1-\gamma^{-(m-s-2)-1}x^d)=-\gamma^{-s-1}\phi_{m-s-2}.$$
And so

\begin{align*}
S(u_iu_j)&=S((-1)^{-j}\xi^{-j}\gamma^{\frac{j(j+1)}{2}}\frac{1}{m}x^{-\frac{1+m}{2}d}\;{[i, m-2-j]}\;y^{\overline{i+j}}g)\\
         &=(-1)^{-j}\xi^{-j}\gamma^{\frac{j(j+1)}{2}}\frac{1}{m}S(g)S(y^{i+j})S({[i, m-2-j]})S(x^{-\frac{1+m}{2}d})\\
         &=(-1)^{-j}\xi^{-j}\gamma^{\frac{j(j+1)}{2}}\frac{1}{m}g^{-1}(-yg^{-1})^{\overline{i+j}}S({[i, m-2-j]})x^{\frac{1+m}{2}d}\\
         &=(-1)^{\overline{i+j}-j}\xi^{-j}\gamma^{\frac{j(j+1)+(\overline{i+j})(\overline{i+j}+1)}{2}}\frac{1}{m}x^{\frac{1+m}{2}d}S({[i, m-2-j]})y^{\overline{i+j}}g^{-(\overline{i+j}+1)}\\
         &=(-1)^{m-1-j}\xi^{-j}\gamma^{\frac{j(j+1)+(\overline{i+j})(\overline{i+j}+1)+(m-1-\overline{i+j})(-m-2i+\overline{i+j})}{2}}\frac{1}{m}x^{\frac{1+m}{2}d-(m-1-\overline{i+j})d}\\ &\quad \quad {[j, m-2-i]}
         y^{\overline{i+j}}g^{-(\overline{i+j}+1)}\\
         &=(-1)^{-j}\xi^{-j}\gamma^{\frac{j^2+j}{2}+i(\overline{i+j}+1)}
         \frac{1}{m}x^{\frac{1+m}{2}d-(m-1-\overline{i+j})d}\;{[j, m-2-i]}\;y^{\overline{i+j}}g^{-(\overline{i+j}+1)}\\
         &=(-1)^{j}\xi^{-j}\gamma^{\frac{j^2+j}{2}+i(i+j+1)}\frac{1}{m}x^{\frac{3-m}{2}d+(i+j)d}\;{[j, m-2-i]}\;y^{\overline{i+j}}g^{-(i+j+1)},
\end{align*}

\begin{align*}
S(u_j)S(u_i)&=(-1)^j\xi^{-j}\gamma^{-\frac{j(j+1)}{2}}x^{jd+\frac{3}{2}(1-m)d}g^{m-j-1}u_j\cdot (-1)^i\xi^{-i}\gamma^{-\frac{i(i+1)}{2}}x^{id+\frac{3}{2}(1-m)d}g^{m-i-1}u_i\\
         &=(-1)^{i+j}\xi^{-i-j}\gamma^{-\frac{i(i+1)+j(j+1)}{2}}x^{(j-i)d}g^{m-j-1}u_jg^{m-i-1}u_i\\
         &=(-1)^{i+j}\xi^{-i-j}\gamma^{-\frac{i(i+1)+j(j+1)}{2}-j(i+1)}x^{(i+j+2)d}g^{-(i+j+2)}u_ju_i\\
         &=(-1)^{j}\xi^{-2i-j}\gamma^{-\frac{j(j+1)}{2}-j(i+1)+(i+j)(i+j+2)}\frac{1}{m}x^{-\frac{1+m}{2}d+(i+j+2)d}\;{[i, m-2-j]}\;y^{\overline{i+j}}g^{-(i+j+1)}\\
         &=(-1)^{j}\xi^{-j}\gamma^{\frac{j^2+j}{2}+i(i+j+1)}\frac{1}{m}x^{\frac{3-m}{2}d+(i+j)d}\;{[i, m-2-j]}\;y^{\overline{i+j}}g^{-(i+j+1)}.
\end{align*}
The proof is done.\\

\noindent $\bullet$ \emph{Step} 4
($(S*\Id)(u_i)=(\Id*S)(u_i)=\epsilon(u_i)$).

In fact,

\begin{align*}
(S*\Id)(u_0)&=\sum_{j=0}^{m-1}S(\gamma^{-j^2}u_j)x^{-jd}g^ju_{-j} \\
           &=\sum_{j=0}^{m-1}\gamma^{-j^2} (-1)^j \xi^{-j} \gamma^{-\frac{j(j+1)}{2}}x^{jd+\frac{3}{2}(1-m)d} g^{m-j-1} u_jx^{-jd}g^ju_{-j} \\
           &=x^{\frac{3}{2}(1-m)d}g^{m-1}(\sum_{j=0}^{m-1}(-1)^j \xi^{-j}\gamma^{-\frac{j(j+1)}{2}}u_ju_{-j})\\
           &=x^{\frac{3}{2}(1-m)d}g^{m-1}(\sum_{j=0}^{m-1}\gamma^{-j}
             \frac{1}{m} x^{-\frac{1+m}{2}d} \;{[j, m-2+j]}\; g  )\\
           &=\frac{1}{m}x^{(1-m)d}(\sum_{j=0}^{m-1}\gamma^{-j}\;{]j-1, j-1[}\;)\\
           &=1 \quad (\,\textrm{by Lemma}\ \ref{ce1})\\
           &=\epsilon(u_{0}).
\end{align*}
And,
\begin{align*}
(\Id*S)(u_0)&=\sum_{j=0}^{m-1}\gamma^{-j^2}u_j S(x^{-jd}g^ju_{-j}) \\
           &=\sum_{j=0}^{m-1}\gamma^{-j^2}u_j S(u_{-j})S(g^j)x^{jd} \\
           &=\sum_{j=0}^{m-1}\gamma^{-j^2}u_j  (-1)^{-j}\xi^{j}\gamma^{\frac{j(-j+1)}{2}}x^{-jd+\frac{3}{2}(1-m)d}g^{m+j-1}u_{-j}g^{-j}x^{jd}\\
           &=\sum_{j=0}^{m-1}\gamma^{-j^2}u_j (-1)^{-j}\xi^{j}\gamma^{\frac{j-j^2}{2}+j^2}x^{\frac{3}{2}(1-m)d}g^{m-1}u_{-j}\\
           &=x^{\frac{1-m}{2}d}g^{m-1}\sum_{j=0}^{m-1}(-1)^{-j}\xi^{j}\gamma^{-\frac{j^2+j}{2}}u_ju_{-j}\\
           &=\frac{1}{m} \sum_{j=0}^{m-1}\xi^{2j}\gamma^{-j}\;{[j, m-2+j]}\;\\
           &=\frac{1}{m}\cdot \sum_{j=0}^{m-1}\;{]j-1, j-1[}\;\\
           &=1 \quad (\,\textrm{by the proof of Lemma}\ \ref{ce1})\\
           &=\epsilon(u_{0}).
\end{align*}

For $1\leqslant i \leqslant m-1$,
\begin{align*}
(S*\Id)(u_{i})&=\sum_{j=0}^{m-1}\gamma^{j(i-j)}S(u_j)x^{-jd}g^ju_{i-j}\\
               &=\sum_{j=0}^{m-1}\gamma^{j(i-j)} (-1)^j\xi^{-j}\gamma^{-\frac{j(j+1)}{2}}x^{jd+\frac{3}{2}(1-m)d}g^{m-1-j}u_j  x^{-jd}g^ju_{i-j}\\
               &=x^{\frac{3}{2}(1-m)d} g^{m-1}(\sum_{j=0}^{m-1}(-1)^j\xi^{-j} \gamma^{ij-\frac{j(j+1)}{2}}u_ju_{i-j})\\
               &=(-1)^{-i}\xi^{-i}\gamma^{\frac{i(i+3)}{2}}\frac{1}{m}x^{(1-m)d}y^{i}(\sum_{j=0}^{m-1}\gamma^{-j}\;{[j, m-2-i+j]})\\
               &=(-1)^{-i}\xi^{-i}\gamma^{\frac{i(i+3)}{2}}\frac{1}{m}x^{(1-m)d}y^{i}(\sum_{j=0}^{m-1}\gamma^{-j}\;{]j-1-i, j-1[}) \\
               &=0  \quad (\,\textrm{by Lemma}\ \ref{ce5}) \\
               &=\epsilon(u_{i}),
\end{align*}
\begin{align*}
(\Id*S)(u_{i})&=\sum_{j=0}^{m-1}\gamma^{j(i-j)}u_jS(u_{i-j})g^{-j}x^{jd}\\
               &=\sum_{j=0}^{m-1}\gamma^{j(i-j)}u_j (-1)^{i-j}\xi^{j-i}\gamma^{-\frac{(i-j)(i-j+1)}{2}}x^{(i-j)d+\frac{3}{2}(1-m)d}g^{m+j-i-1}u_{i-j}g^{-j}x^{jd}\\
               &=x^{id+\frac{1-m}{2}d}g^{m-1-i}(\sum_{j=0}^{m-1} (-1)^{i-j}\xi^{j-i}\gamma^{-\frac{i(i+1)+j(j+1)}{2}}u_ju_{i-j})\\
               &=\xi^{-2i}\gamma^{i(i+1)}\frac{1}{m} x^{(i-m)d}y^{i}g^{m-i}(\sum_{j=0}^{m-1} \xi^{2j}\gamma^{-(i+1)j}[j,m-2-i+j])\\
               &=\xi^{-2i}\gamma^{i(i+1)}\frac{1}{m} x^{(i-m)d}y^{i}g^{m-i}(\sum_{j=0}^{m-1} \gamma^{-ij}\;{[j, m-2-i+j]})\\
               &=\xi^{-2i}\gamma^{i(i+1)}\frac{1}{m} x^{(i-m)d}y^{i}g^{m-i}(\sum_{j=0}^{m-1} \gamma^{-ij}\;{]j-1-i, j-1[})\\
               &=0  \quad (\,\textrm{by Lemma}\ \ref{ce5})\\
                &=\epsilon(u_{i}).
\end{align*}

By steps 1, 2, 3, 4, $D(m, d, \xi)$ is a Hopf algebra.\qed

\subsection{}{\bf Properties of $D(m, d, \xi)$.}

For short, let $D:=D(m,d,\xi)$. For this Hopf algebra, the following observation is not hard.
\begin{lemma}\label{d4.3} $D(m, d, \xi)$ is PI,  affine and has GK-dimension one.
\end{lemma}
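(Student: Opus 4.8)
The plan is to verify the three assertions---PI, affine, and GK-dimension one---in a single pass by exhibiting a finite generating set and an explicit finite module structure over a commutative affine subalgebra. First I would observe that $D$ is affine essentially by construction: it is generated as an algebra by the finite list $x^{\pm 1}, g, y, u_0, \ldots, u_{m-1}$, so affineness is immediate and requires no real argument beyond recording it.

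The substantive step is to control the size of $D$ as a module over a well-chosen commutative subalgebra. The natural candidate is $C := k[x^{\pm 1}]$, or slightly larger, the commutative algebra generated by $x^{\pm 1}$ and $g$ (note $g^m = x^{md}$, so $g$ is integral over $k[x^{\pm 1}]$ and $C$ is a finite $k[x^{\pm 1}]$-module). I would use the defining relations to show that $D$ is spanned over $C$ by the finite set of monomials $\{\, g^a y^b u_{i_1}\cdots u_{i_r} \,\}$ and then collapse products of the $u_i$'s using relation \eqref{eq4.3}: that relation expresses \emph{any} product $u_iu_j$ as a single $C$-multiple of $y^{i+j}g$, so no word in the $u$'s of length $\geqslant 2$ survives---everything reduces to $C$-combinations of $1, u_0,\ldots,u_{m-1}$ times the $y^b g^a$ part. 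Combined with $y^m = 1 - x^\omega \in C$ (which bounds the exponent of $y$) and the commutation rules $xu_i = u_ix^{-1}$, $u_ig = \gamma^i x^{-2d} g u_i$, $yu_i = \phi_i u_{i+1}$ (which let one move all $x,g,y$ past the $u$'s at the cost of scalars and elements of $C$), this shows $D$ is a \emph{finitely generated} module over the commutative affine domain $C$. The cleanest way to package this is to read off the spanning set directly from the bigrading \eqref{eq4.6}: each homogeneous piece $D_{ij}$ is displayed as a single free rank-one $k[x^{\pm 1}]$-module (of the form $k[x^{\pm1}]y^{(j-i)/2}g^{i/2}$ or $k[x^{\pm1}]g^{(i-1)/2}u_{(j-i)/2}$), and since there are only finitely many pairs $(i,j)$ with $0\leqslant i,j\leqslant 2m-1$, summing gives that $D$ is a finitely generated $k[x^{\pm1}]$-module on at most $(2m)^2$ generators.

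Once $D$ is finite over the central-up-to-automorphism commutative ring $C$, the remaining conclusions are standard. For the PI property I would invoke that a ring which is a finite module over a commutative subring satisfies a polynomial identity (it embeds into matrices over a commutative ring, e.g.\ via the regular representation on the finitely generated module, or one applies the standard result that module-finite extensions of commutative rings are PI). For GK-dimension, module-finiteness over $C$ gives $\operatorname{GKdim} D = \operatorname{GKdim} C$, and since $C$ is a finite extension of $k[x^{\pm1}]$ we have $\operatorname{GKdim} C = \operatorname{GKdim} k[x^{\pm1}] = 1$; hence $\operatorname{GKdim} D = 1$.

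The main obstacle, and the only place where care is genuinely needed, is the reduction of arbitrary products of the generators to the finite spanning set---specifically confirming that relation \eqref{eq4.3} really does terminate the straightening procedure rather than producing ever-longer words. The potential worry is that rewriting $u_iu_j$ introduces fresh $y^{i+j}g$ factors whose subsequent interaction with another $u_k$ (via $yu_i=\phi_iu_{i+1}$ and $u_ig=\gamma^i x^{-2d}gu_i$) might reintroduce complexity; I would resolve this by arguing via the bigrading instead of by hand, since the explicit computation of every $D_{ij}$ already given in the construction \emph{is} the completed straightening, so finite generation over $k[x^{\pm 1}]$ falls out with no further combinatorics. The rest is routine once that module-finiteness is in hand.
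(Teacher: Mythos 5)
Your proposal is correct and follows essentially the same route as the paper, whose entire proof is that affineness and GK-dimension one are immediate and that direct computation shows $D$ is a finitely generated $k[x^{\pm 1}]$-module, whence PI. You simply flesh out the ``direct computation'' by reading module-finiteness off the bigrading \eqref{eq4.6} and additionally derive $\operatorname{GKdim} D = \operatorname{GKdim} k[x^{\pm 1}] = 1$ from the same module-finiteness, which is a legitimate expansion of the paper's one-line argument.
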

\begin{proof} See Remark \ref{r4.1}.
\end{proof}

\begin{lemma}\label{d4.6} \emph{gl.dim}$D(m, d, \xi)=1$.
\end{lemma}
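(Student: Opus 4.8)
The plan is to reduce the computation of $\operatorname{gl.dim} D$ to the general principle already recorded for affine noetherian PI Hopf algebras, so that the only genuine point left is the \emph{finiteness} of the global dimension. First I would observe that $D$ is noetherian: by the proof of Lemma \ref{d4.3} it is finitely generated as a module over the noetherian ring $k[x^{\pm 1}]$, and since every left ideal of $D$ is in particular a $k[x^{\pm1}]$-submodule of the noetherian module ${}_{k[x^{\pm1}]}D$, the ascending chain condition on left ideals holds (similarly on the right). Thus $D$ is an affine noetherian PI Hopf algebra, so Lemma \ref{l2.1} makes it AS-Gorenstein, and the theorem of Wu and Zhang makes it Cohen--Macaulay; hence its injective dimension is finite and equals $\operatorname{GKdim} D = 1$. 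Because the global dimension of a noetherian ring, once finite, coincides with the injective dimension of the ring over itself, it suffices to prove $\operatorname{gl.dim} D<\infty$; the value $1$ then follows automatically.

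To establish finiteness I would exploit a coarse grading. Assigning degree $\bar 0$ to $x^{\pm 1}, g, y$ and degree $\bar 1$ to each $u_i$, one checks directly from the defining relations of Subsection 4.1 that $D$ becomes a $\mathbb{Z}/2$-graded algebra $D = D_{\bar 0}\oplus D_{\bar 1}$: every defining relation is homogeneous, the product of two odd elements lands in the even part through relation \eqref{eq4.3}, and the even part $D_{\bar 0}$ is exactly the generalized Liu algebra $B(m,\omega,\gamma)$. By Theorem \ref{t2.7} the algebra $B(m,\omega,\gamma)$ is prime regular of GK-dimension one, so $\operatorname{gl.dim} B(m,\omega,\gamma)=1$. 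The key point will be that this grading is \emph{strong}: for $\mathbb{Z}/2$ it is enough to produce $1\in D_{\bar 1}D_{\bar 1}$, and such an expression is furnished for free by Step $4$ of the proof of Proposition \ref{p4.2}, where the antipode axiom gives
$$\sum_{j=0}^{m-1}\gamma^{-j^2}\,S(u_j)\,x^{-jd}g^ju_{-j}=(S*\Id)(u_0)=1,$$
each summand lying in $D_{\bar 1}\cdot D_{\bar 0}\cdot D_{\bar 1}\subseteq D_{\bar 1}D_{\bar 1}$.

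With the strong $\mathbb{Z}/2$-grading in hand, and since $\operatorname{char} k=0$ makes $|\mathbb{Z}/2|=2$ invertible, a standard transfer result for strongly graded rings by a finite group of invertible order gives $\operatorname{gl.dim} D=\operatorname{gl.dim} D_{\bar 0}=\operatorname{gl.dim} B(m,\omega,\gamma)=1$; in particular $\operatorname{gl.dim} D<\infty$, which combined with the first paragraph yields $\operatorname{gl.dim} D=1$. I expect this last transfer step to be the main obstacle: one must invoke the precise homological comparison for strongly graded rings (restriction and induction along $B(m,\omega,\gamma)\hookrightarrow D$ are exact and $D_{\bar 0}$ is a bimodule direct summand of $D$, making $D_{\bar 0}\subseteq D$ a separable extension when $2$ is invertible), rather than the naive and generally false inequality $\operatorname{gl.dim} D\le \operatorname{gl.dim} D_{\bar 0}$, which can fail in positive characteristic. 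By contrast, checking that the even--odd decomposition is genuinely an algebra grading is routine from the relations, and strongness reduces, as above, to the single identity supplied by the antipode, so these steps will be quick.
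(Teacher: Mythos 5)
Your proof is correct, but it takes a genuinely different route from the paper's. The paper stays inside Hopf-algebra technique: it passes to the finite-dimensional quotient Hopf algebra $D'=D/(y)$, proves $D'$ semisimple by exhibiting an explicit left integral with $\epsilon(\int_{D'}^l)=2m^2d\neq 0$, shows p.dim$_D k=1$ by a change-of-rings argument along the regular normal element $y$ (the trivial module kills $y$, so it cannot embed in a free module, giving p.dim$\,\neq 0$), and concludes via the fact from \cite{LL} and \cite[Corollary 1.4]{BG} that the global dimension of a Hopf algebra equals the projective dimension of the trivial module. You instead descend along the strongly $\mathbb{Z}/2$-graded structure $D=D_{\bar 0}\oplus D_{\bar 1}$ with $D_{\bar 0}=B(m,\omega,\gamma)$: the parity grading is indeed an algebra grading (every defining relation is parity-homogeneous), strongness correctly reduces to $1=(S*\Id)(u_0)\in D_{\bar 1}D_{\bar 1}$, which is supplied by Step 4 of the proof of Proposition \ref{p4.2}, and in characteristic $0$ the graded Maschke/separability transfer gives $\operatorname{gl.dim}D=\operatorname{gl.dim}B(m,\omega,\gamma)=1$. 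Three caveats. First, the transfer theorem is not in this paper's toolkit and needs an outside citation (e.g.\ the graded Maschke theorem or the separability results of N\u{a}st\u{a}sescu--Van den Bergh--Van Oystaeyen); also note that the ``bimodule direct summand'' property you mention only yields the inequality $\operatorname{gl.dim}D_{\bar 0}\leqslant \operatorname{gl.dim}D$, while the inequality you actually need, $\operatorname{gl.dim}D\leqslant \operatorname{gl.dim}D_{\bar 0}$, comes from the separability splitting $M\mid D\otimes_{D_{\bar 0}}M$ together with projectivity of $D$ over $D_{\bar 0}$ (automatic from strong grading) --- your parenthetical slightly conflates these two mechanisms. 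Second, $\operatorname{gl.dim}B(m,\omega,\gamma)=1$ is established in \cite{BZ} where the generalized Liu algebras are verified to be prime regular of GK-dimension one; Theorem \ref{t2.7} is the classification statement and does not literally assert this. Third, your first paragraph (AS-Gorenstein plus Cohen--Macaulay, hence injective dimension $1$) is redundant once the transfer delivers the value $1$ outright; all you need besides the upper bound is $\operatorname{gl.dim}D\geqslant 1$, which holds because $D$ is infinite-dimensional and hence not semisimple artinian. As for what each approach buys: the paper's route produces the semisimple quotient $D'$, which is reused verbatim in the proof that $D$ is not pointed, whereas your route avoids computing the integral of $D'$ and the change of rings entirely and is more structural, exhibiting the regularity of $D$ as graded descent from its even part $B(m,\omega,\gamma)$.
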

\begin{proof} Now we find that $D=\bigoplus_{0\leqslant i\leqslant 2m-1} D_i^l$ satisfies all conditions stated in \cite[Proposition 5.1]{BZ} and thus by \cite[Proposition 5.1(a)]{BZ} every nonzero homogeneous element is regular (and thus a nonzero-divisor). In particular,  $y\in D_0^l$ is a regular element of $D$. It is not hard to see that $(y)$ is a Hopf ideal. Let $D':=D/(y)$. Then $D'$ is a finite
dimensional Hopf algebra. We will show that $D'$ is semisimple at first. For notational convenience, the images of $x,g,u_{i}$ in $D'$ are still
written as $x,g$ and $u_i$. One can check that
$$\int_{D'}^l:=\sum_{i=0}^{md-1}\sum_{j=0}^{m-1}x^ig^j+\sum_{i=0}^{md-1}\sum_{j=0}^{m-1}x^ig^ju_0$$
is a non-zero left integral of $D'$. Indeed, it is not hard to see that $x\int_{D'}^l=g\int_{D'}^l=\int_{D'}^l$ and  the following relations $$x^{md}=g^m=1, \ u_ju_0=0,\ u_i=\gamma^{-i}x^du_i,\ xu_i=u_ix^{-1},\ u_ig=\gamma^i x^{-2d}gu_i$$
$$ \ u_0^2=\frac{1}{m}x^{-\frac{1+m}{2}d}\phi_0\cdots \phi_{m-2}g
$$hold in $D'$ for all $0
\leqslant i \leqslant m-1$ and $1\leqslant j \leqslant m-1$. Here we only explain the equation $u_i=\gamma^{-i}x^du_i$ since the others are clear. Since $(1-\gamma^{-i}x^d)u_i=\phi_{i-1}u_i=yu_{i-1}=0$ in $D'$, we have $u_i=\gamma^{-i}x^du_i$. Therefore,

\begin{align*}u_0\cdot \int_{D'}^l&=
\sum_{i=0}^{md-1}\sum_{j=0}^{m-1}x^ig^ju_0+\sum_{i=0}^{md-1}\sum_{j=0}^{m-1}x^ig^ju_0^{2}\\
&=\sum_{i=0}^{md-1}\sum_{j=0}^{m-1}x^ig^ju_0+\sum_{i=0}^{md-1}\sum_{j=0}^{m-1}x^ig^j\\
&=\epsilon(u_0)\cdot \int_{D'}^l,
\end{align*}
and
\begin{align*}u_s\cdot \int_{D'}^l&=
\sum_{i=0}^{md-1}\sum_{j=0}^{m-1}\gamma^{sj}x^ig^ju_s\\
&=\sum_{i=0}^{md-1}\sum_{j=0}^{m-1}\gamma^{sj}x^ig^j\gamma^{-s}x^{d}u_s\\
&=\gamma^{-s}\sum_{i=0}^{md-1}\sum_{j=0}^{m-1}\gamma^{sj}x^ig^ju_s,
\end{align*}
which implies that $\sum\limits_{i=0}^{md-1}\sum\limits_{j=0}^{m-1}\gamma^{sj}x^ig^ju_s=0$, and so
$u_s\cdot \int_{D'}^l=0=\epsilon(u_{s})\int_{D'}^l$
for all $1 \leqslant s \leqslant m-1$.
Clearly,  $\epsilon(\int_{D'}^l)=2m^{2}d\neq
0$. So $D'$ is semisimple.

Secondly, let $M$ be the trivial $D$-module $k$. By \cite{LL} or \cite[Corollary 1.4]{BG},
it is enough to show that p.dim$_DM$=1. Since $y$ is a regular
element (i.e., not a zero divisor) of $D$ and $yM=0$, $M$ cannot be a
submodule of a free $D$-module, which implies p.dim$_DM\neq 0$.
Since  $D'$ is semisimple, we get p.dim$_{D'}M = 0$. By standard method of ``change of rings" (see e.g., \cite[Lemma 5.26]{Lam}), p.dim$_DM$=1.
\end{proof}

\begin{lemma}\label{d4.4} $\io(D)=2m, \im(D)=m$.
\end{lemma}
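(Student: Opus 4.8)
The plan is to reduce the computation of $\io(D)$ and $\im(D)$ to understanding the single algebra character $\pi\colon D\to k$ attached to the left homological integral $\int_D^l$, and then to read off both invariants from the two winding automorphisms $\Xi_\pi^l,\Xi_\pi^r$ already displayed in the construction of $D$. Concretely, I would first determine $\int_D^l$ and the resulting $\pi$, next verify that the left and right winding automorphisms associated to $\pi$ are exactly the maps written down above, and finally compute the order of $\Xi_\pi^l$ together with the intersection $G_\pi^l\cap G_\pi^r$.

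\emph{Identifying $\pi$.} First I would compute the left homological integral exactly as was done for the generalized Liu algebra and the infinite dimensional Taft algebras, applying \cite[Lemma 2.6]{LWZ} to the present presentation of $D$. Since $D$ is prime regular of GK-dimension one (Lemma \ref{d4.3}), it is AS-Gorenstein of injective dimension one, so $\int_D^l=\Ext_D^1({}_{D}k,\,{}_{D}D)$ is one dimensional and the canonical map $\pi\colon D\to D/\mathrm{r.ann}(\int_D^l)$ is an algebra character. The expected outcome is
\[\pi(x)=1,\quad \pi(g)=\gamma^{-1},\quad \pi(y)=0,\quad \pi(u_0)=\xi^{-1},\quad \pi(u_s)=0\ (1\le s\le m-1).\]
Granting this, a short check using $\D(g)=g\otimes g$, $\D(y)=y\otimes g+1\otimes y$ and $\D(u_i)=\sum_{j}\gamma^{j(i-j)}u_j\otimes x^{-jd}g^ju_{i-j}$ recovers the two winding automorphisms: for instance $\Xi_\pi^r(u_i)=\sum_{j}\gamma^{j(i-j)}u_j\,\pi(x^{-jd}g^ju_{i-j})$ collapses to the single $j=i$ term, giving $\gamma^{-i}\xi^{-1}u_i=\xi^{-(2i+1)}u_i$ because $\gamma=\xi^2$, and likewise $\Xi_\pi^l(u_i)=\xi^{-1}u_i$. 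This step is where essentially all the work lies.

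\emph{Computing the orders.} Since $\xi$ is a primitive $2m$-th root of unity, $\Xi_\pi^l$ acts on each $u_i$ by the scalar $\xi^{-1}$ of order $2m$ (and on $g$ by $\gamma^{-1}$ of order $m$), so $\Xi_\pi^l$ has order $2m$ and $\io(D)=|G_\pi^l|=2m$. For $\im(D)$ I would intersect the two cyclic groups directly: writing out $(\Xi_\pi^l)^a$ and $(\Xi_\pi^r)^b$ on the generators, equality on $y$ forces $m\mid b$, after which the action on the $u_i$ forces $a\equiv b\pmod{2m}$ (using $\gamma=\xi^2$ and $\xi^m=-1$); combined with the condition on $g$ this leaves exactly $a\equiv b\equiv 0$ and $a\equiv b\equiv m\pmod{2m}$. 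Hence $G_\pi^l\cap G_\pi^r=\{\Id,(\Xi_\pi^l)^m\}$, where $(\Xi_\pi^l)^m=(\Xi_\pi^r)^m$ is the involution fixing $x,y,g$ and sending each $u_i\mapsto -u_i$ (as $\xi^{-m}=-1$). Therefore $|G_\pi^l\cap G_\pi^r|=2$ and $\im(D)=|G_\pi^l/(G_\pi^l\cap G_\pi^r)|=2m/2=m$. The main obstacle is the first step: carrying out the homological-integral computation for an algebra with this many generators and the intricate $u_iu_j$ relations, and thereby justifying that the maps denoted $\Xi_\pi^l,\Xi_\pi^r$ in the construction genuinely arise from $\int_D^l$; once $\pi$ is pinned down, the order computations are routine.
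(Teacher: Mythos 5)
Your proposal is correct and follows essentially the same route as the paper: the paper also applies \cite[Lemma 2.6]{LWZ} (using that $y$ is a normal nonzero-divisor generating a Hopf ideal, which tames the ``many generators'' obstacle you worry about) to get $\int_D^l=D/(y,\,x-1,\,g-\gamma^{-1},\,u_0-\xi^{-1},\,u_1,\dots,u_{m-1})$, reads off the same winding automorphisms $\Xi_\pi^l,\Xi_\pi^r$, and concludes $\io(D)=2m$ and $G_\pi^l\cap G_\pi^r=\langle(\Xi_\pi^l)^m\rangle$ of order $2$, whence $\im(D)=m$. Your explicit verification of the intersection (forcing $m\mid b$ on $y$, then $a\equiv b\pmod{2m}$ on the $u_i$ via $\xi^m=-1$) just spells out what the paper leaves as a direct check.
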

\begin{proof} By the proof of Lemma \ref{d4.6} $D'$ is semisimple, then $D'$ is unimodular. Note that the left and right homological integrals agree with the classical left and right integrals respectively when the Hopf algebra is of finite dimensional. So $D'$ is also unimodular for homological integrals, that is, the left homological integral of $D'$, also denoted by $\int_{D'}^l$, is isomorphic to $k$ as $D'$-bimodule. Thus we have $\int_{D'}^l=D'/(x-1, g-1, u_0-1, u_1, \cdots, u_{m-1})$. Using \cite[Lemma 2.6]{LWZ},  $$\int_D^l=(\int_{D'}^l)^{\tau^{-1}}=D/(y, x-1,
g-\gamma^{-1}, u_0-\xi^{-1}, u_1, u_2, \cdots, u_{m-1}),$$ where $\tau$ is the algebra automorphism of $D$ such that $yh=\tau(h)y$ for all $h\in D$. The
corresponding homomorphism $\pi$ yields left and right winding
automorphisms
\[{\Xi_{\pi}^l:}
\begin{cases}
x\longmapsto x, &\\
y\longmapsto y, &\\
g\longmapsto \gamma^{-1}g, &\\
u_i\longmapsto \xi^{-1}u_i, &
\end{cases}
\textrm{and}\;\;\;\; \Xi_{\pi}^r:
\begin{cases}
x\longmapsto x, &\\
y\longmapsto \gamma^{-1}y. &\\
g\longmapsto \gamma^{-1}g, &\\
u_i\longmapsto \xi^{-(2i+1)} u_i. &
\end{cases}\]
Clearly these automorphisms have order $2m$ and $G_\pi^l\cap
G_\pi^r=\langle (\Xi_{\pi}^l)^m \rangle$. Then we have $\io(D)=2m$
and $\im(D)=m$ by $|G_\pi^l\cap G_\pi^r|=2$.
\end{proof}

\begin{remark} These winding automorphisms are just the automorphisms constructed in Subsection 4.1 and  the corresponding gradings on $D$  have been exhibited there.
\end{remark}

\begin{lemma}\label{d4.7} $D(m, d, \xi)$ is prime.
\end{lemma}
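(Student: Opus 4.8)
The plan is to prove primeness by localizing $D:=D(m,d,\xi)$ at a central domain, showing the resulting ring is simple, and then descending along the localization map. First I would observe that the symmetric Laurent polynomials $Z_0:=k[x+x^{-1}]\cong k[t]$ lie in the center of $D$: they commute with $g,y$ for trivial reasons, and with each $u_i$ because $xu_i=u_ix^{-1}$ forces $(x^a+x^{-a})u_i=u_i(x^{-a}+x^{a})$. By the bigrading \eqref{eq4.6} the algebra $D$ is a free left $k[x^{\pm1}]$-module, hence a free (in particular torsion-free) $Z_0$-module, so the central localization $D\hookrightarrow D_Q:=Q\otimes_{Z_0}D$ is injective, where $Q:=k(x+x^{-1})$. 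Since inverting nonzerodivisors cannot annihilate a product of nonzero ideals, it suffices to prove that $D_Q$ is prime; and as $D_Q$ is finite-dimensional over the field $Q$ (of dimension $(2m)^2$), primeness of $D_Q$ is the same as simplicity.

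Next I would analyze the structure of $D_Q$ over the subfield $F:=k(x)\subseteq D_Q$, which is quadratic over $Q$ via $x^2-(x+x^{-1})x+1=0$. In $D_Q$ the elements $\phi_i=1-\gamma^{-i-1}x^d$ and $y^m=1-x^{md}$ are nonzero elements of the field $F$, hence units; thus $y$ is invertible, and the relation $yu_i=\phi_iu_{i+1}$ gives $u_{i+1}=\phi_i^{-1}yu_i$, so every $u_i$ is a unit multiple of $u_0$. Moreover $u_0^2=\tfrac1m x^{-\frac{1+m}{2}d}(\phi_0\cdots\phi_{m-2})g$ is a unit, so $u_0$ is a unit as well. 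The even part $A:=F\langle g,y\rangle$, which is the generic fiber of the Liu subalgebra $B$, is presented by $g^m=x^{md}$, $y^m=1-x^{md}$, $yg=\gamma gy$; since $x^{md}$ and $1-x^{md}$ are nonzero in $F$, this is the symbol (cyclic) algebra of degree $m$, a central simple $F$-algebra, and $D_Q=A\oplus Au_0$.

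Finally I would establish simplicity. Conjugation by the unit $u_0$ stabilizes $A$, because $u_0gu_0^{-1}=x^{-2d}g$ and $u_0^{-1}yu_0=\xi x^{-d}y$ both lie in $A$; on the center $F$ it acts by $x\mapsto x^{-1}$, the nontrivial automorphism $\sigma$ of $F/Q$. Hence $u_0$ induces an \emph{outer} automorphism $\tau$ of the simple algebra $A$ with $u_0^2\in F^\times$, exhibiting $D_Q=A\langle u_0;\tau\rangle$ as a $\mathbb{Z}/2$-crossed product of a simple ring by a group acting through outer automorphisms. By the classical theorem that such a crossed product is simple, $D_Q$ is simple, therefore prime; and since $D\hookrightarrow D_Q$, the images of two nonzero ideals of $D$ are nonzero and cannot have zero product, so $D$ is prime.

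The main obstacle is the middle step: recognizing $D_Q$ as a crossed product of a central simple algebra by an outer automorphism. The delicate verifications there are that $A$ is genuinely central simple (which reduces to the symbol-algebra criterion, guaranteed by $x^{md}\neq0$ and $1-x^{md}\neq0$ in $F$) and that $\tau$ is outer (immediate once one computes its action $x\mapsto x^{-1}$ on the center $F$); the rest of the argument is formal localization bookkeeping.
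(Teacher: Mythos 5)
Your proof is correct, but it takes a genuinely different route from the paper's. The paper deduces primeness from the homological-integral machinery: using the faithful action of $G_\pi^l$, the strong grading, and the fact that $D_0$ is a commutative domain, it invokes \cite[Proposition 5.1]{BZ} to get $\text{PI-deg}(D)\leqslant \io(D)=2m$, then uses regularity (Lemma \ref{d4.6}) together with \cite[Lemma 5.3]{LWZ} to obtain the reverse inequality $\io(D)\leqslant \text{PI-deg}(D/P_0)\leqslant \text{PI-deg}(D)$, and concludes primeness from \cite[Proposition 5.1 (d)]{BZ}. Your argument is instead a self-contained central localization: $D$ is free over the central subalgebra $k[x+x^{-1}]$ (freeness of rank $(2m)^2$ follows from the bigrading \eqref{eq4.6}, which both proofs take as given), so $D$ embeds in the $(2m)^2$-dimensional $Q$-algebra $D_Q$, which you identify as a $\mathbb{Z}/2$-crossed product of the symbol algebra $A=(x^{md},\,1-x^{md};F,\gamma)$ by conjugation by the unit $u_0$ --- an automorphism that is outer because it restricts to the nontrivial element of $\mathrm{Gal}(F/Q)$ on the center $F$, hence cannot be inner; the classical simplicity theorem for crossed products by outer actions then gives simplicity of $D_Q$ and primeness of $D$ by descent. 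What each approach buys: the paper's route is very short given the BZ/LWZ results already in play and is interwoven with the identity $\text{PI-deg}(D)=\io(D)=2m$ that the paper wants anyway; yours avoids any appeal to the regularity of $D$ and to the cited propositions, and as a byproduct shows $Z(D_Q)=Q$ and that the generic fiber is central simple of degree $2m$ over $Q$, recovering $\text{PI-deg}(D)=2m$ by elementary means. One small slip: you assert $u_0^2\in F^\times$, but in fact $u_0^2=\tfrac{1}{m}x^{-\frac{1+m}{2}d}\phi_0\cdots\phi_{m-2}\,g$ is a unit of $A$ not lying in $F$; this is harmless, since the crossed-product theorem only needs $u_0^2\in A^\times$ together with $\tau\notin\mathrm{Inn}(A)$, both of which you have.
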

\begin{proof} We know that $G_\pi^l$ is a finite abelian group acting faithfully on $D(m, d,
\xi)$ (see the proof of Lemma \ref{d4.4}). Moreover, the $\widehat{G_\pi^l}$-grading is strong  and $D_0^l=k[x^{\pm 1}, y]$ is a commutative domain, which shows
that $D$ meets the initial conditions of \cite[Proposition 5.1]{BZ}.
It follows that PI-deg$(D)\leqslant \io(D)=2m$. Since $D$ is regular,
we have $\io(D)\leqslant$ PI-deg$(D/P_0)$ by \cite[Lemma
5.3]{LWZ}, where $P_0$ is the minimal prime ideal of $D$ contained
in $\ker(\epsilon)$. It is clear that PI-deg $(D/P_0)\leqslant$
PI-deg$(D)$. So
PI-deg$(D)=\io(D)=2m$ and thus \cite[Proposition 5.1 (d)]{BZ} applied.
Therefore $D$ is prime.
\end{proof}

The next proposition is a direct consequence of Lemmas \ref{d4.3}, \ref{d4.6}, \ref{d4.4} and  \ref{d4.7}.
\begin{proposition} $D(m, d, \xi)$ is an affine prime regular
Hopf algebra of GK-dimension one with $\io(D)=2m, \im(D)=m$.
\end{proposition}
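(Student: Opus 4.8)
The plan is to read the assertion as a bundling of five separate facts about $D:=D(m,d,\xi)$, each already secured in the preceding material, and then merely to assemble them. Unwinding the terminology, the claim ``affine prime regular Hopf algebra of GK-dimension one with $\io(D)=2m,\ \im(D)=m$'' asks for: (i) $D$ is a Hopf algebra; (ii) $D$ is affine and of GK-dimension one; (iii) $D$ is prime; (iv) $D$ is regular, that is, has finite global dimension; and (v) the two integral invariants take the stated values.

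So the first thing I would do is invoke Proposition \ref{p4.2}, which equips $D$ with a coproduct, counit and antipode satisfying all the Hopf axioms, giving (i). For (ii) I would cite Lemma \ref{d4.3}, which shows directly that $D$ is affine and of GK-dimension one (and, as a byproduct, PI, since $D$ is a finitely generated $k[x^{\pm 1}]$-module). The primality (iii) is precisely Lemma \ref{d4.7}. For regularity (iv) I recall from the ring-theoretic preliminaries that \emph{regular} means finite global dimension; Lemma \ref{d4.6} computes \emph{gl.dim}$\,D=1<\infty$, so $D$ is regular. Finally, Lemma \ref{d4.4} delivers $\io(D)=2m$ and $\im(D)=m$, which is (v). Concatenating (i)--(v) yields the proposition with no further computation.

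Since the present statement is a synthesis, the genuine difficulty lies entirely upstream, and I would flag where. The heaviest input is Proposition \ref{p4.2}: checking that the intricate relation \eqref{eq4.3} for the products $u_iu_j$ is compatible with a coassociative coproduct and with the antipode is exactly the point at which the combinatorial identities of Section~3 (Lemmas \ref{ce1}, \ref{ce5}, \ref{ce7} and Corollary \ref{ce6}) are consumed. The second subtle ingredient is the primality in Lemma \ref{d4.7}, which is not formal: it runs through the bound $\text{PI-deg}(D)\leqslant\io(D)=2m$ via \cite[Proposition 5.1]{BZ} (using that the $\widehat{G_\pi^l}$-grading is strong and $D_0$ is a commutative domain), then the reverse inequality $\io(D)\leqslant\text{PI-deg}(D/P_0)$ from \cite[Lemma 5.3]{LWZ} together with regularity, forcing the equality $\text{PI-deg}(D)=\io(D)$ that triggers Brown--Zhang's primeness criterion \cite[Proposition 5.1(d)]{BZ}. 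Once those two results are in place, the proposition itself is a one-line assembly and presents no obstacle of its own.
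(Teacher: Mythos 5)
Your proposal matches the paper exactly: the paper states this proposition as a direct consequence of Lemmas \ref{d4.3}, \ref{d4.4}, \ref{d4.6} and \ref{d4.7} (with Proposition \ref{p4.2} supplying the Hopf structure), which is precisely your assembly of facts (i)--(v). Your additional remarks correctly identify where the real work lives upstream, but the proof of the statement itself is the same one-line synthesis the paper intends.
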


At the end of this section, we show that

\begin{proposition} $D$ is not pointed.
\end{proposition}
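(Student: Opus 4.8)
The plan is to exhibit inside $D$ a simple subcoalgebra of dimension strictly greater than one. Recall that a Hopf algebra is pointed precisely when its coradical coincides with the linear span of its group-like elements, equivalently when every simple subcoalgebra is one-dimensional. Thus it suffices to produce a single simple subcoalgebra of $D$ of dimension $>1$. The shape of the coproduct $\Delta(u_i)=\sum_{j=0}^{m-1}\gamma^{j(i-j)}u_j\otimes x^{-jd}g^ju_{i-j}$ strongly suggests that the $u_i$ are twisted entries of a comatrix coalgebra, and this is the structure I would extract.

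Concretely, for $0\leqslant j,l\leqslant m-1$ I would set
\[
c_{jl}:=\gamma^{j(l-j)}\,x^{-jd}g^{j}u_{l-j},
\]
with all indices read modulo $m$ (so that $u_{l-j}=u_{\overline{l-j}}$), and put $C:=\sum_{j,l}k\,c_{jl}$. The first task is to check that $c_{jl}$ depends only on $j,l\bmod m$, which uses $g^{m}=x^{md}$ together with $\gamma^{m}=1$. The heart of the proof is then the verification of the two comatrix identities
\[
\Delta(c_{jl})=\sum_{p=0}^{m-1}c_{jp}\otimes c_{pl}\qquad\text{and}\qquad \epsilon(c_{jl})=\delta_{jl}.
\]
For the counit this is immediate from $\epsilon(x)=\epsilon(g)=\epsilon(u_0)=1$ and $\epsilon(u_s)=0$ for $s\not\equiv0$. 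For the coproduct I would expand $\Delta(c_{jl})$ using that $x,g$ are group-like together with the formula for $\Delta(u_{l-j})$, and then move the group-like prefactor $x^{-jd}g^{j}$ to the correct side by the commutation relations $xu_i=u_ix^{-1}$ and $u_ig=\gamma^i x^{-2d}gu_i$; after re-indexing by $p=j+k$ the whole computation collapses to the single scalar identity $j(l-j)+k(l-j-k)-jk-(j+k)(l-j-k)=0$, which forces every structure constant to equal $1$ and yields exactly $\sum_p c_{jp}\otimes c_{pl}$.

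It remains to see that $\dim_k C=m^2$, and here I would invoke the bigrading \eqref{eq4.6}: the element $g^{j}u_{l-j}$ lies in the homogeneous component $D_{2j+1,\,2l+1}$ (the factor $x^{-jd}\in D_{00}$ does not affect the bidegree), so distinct pairs $(j,l)$ with $0\leqslant j,l\leqslant m-1$ land in distinct bigraded pieces and the $c_{jl}$ are linearly independent. Consequently $C$ is a subcoalgebra isomorphic to the full comatrix coalgebra on $m^2$ entries, whose dual algebra is $M_m(k)$; since $M_m(k)$ is simple, $C$ is a simple subcoalgebra of $D$ of dimension $m^2$. As $m\geqslant2$ this dimension exceeds $1$, so $C$ lies inside the coradical of $D$ without being spanned by group-like elements, and therefore $D$ is not pointed. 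I expect the main obstacle to be purely computational: the bookkeeping of indices modulo $m$ and the careful noncommutative reordering of $x^{-jd}g^{j}$ past the $u_k$ in the coproduct verification; once the scalar identity above is isolated, everything else is routine.
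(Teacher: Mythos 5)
Your proof is correct, but it takes a genuinely different route from the paper's. The paper argues indirectly: it passes to the finite-dimensional quotient Hopf algebra $D'=D/(y)$, whose semisimplicity was already established via an explicit integral in the proof of Lemma \ref{d4.6}; if $D$ were pointed then so would be $D'$ by \cite[Corollary 5.3.5]{Mo}, hence $D'$ would be semisimple and pointed, therefore cosemisimple by Larson--Radford \cite{LR}, and a pointed cosemisimple Hopf algebra is a group algebra, hence cocommutative --- absurd. You instead exhibit the obstruction explicitly, and your computation checks out: with $c_{jl}=\gamma^{j(l-j)}x^{-jd}g^{j}u_{l-j}$ one indeed gets $\Delta(c_{jl})=\sum_{p}c_{jp}\otimes c_{pl}$ and $\epsilon(c_{jl})=\delta_{jl}$; the exponent identity $j(l-j)+k(l-j-k)-jk-(j+k)(l-j-k)=0$ that you isolate holds identically (in fact no reordering past the $u_k$ is needed at all, since the group-like prefactor $x^{-jd}g^{j}$ simply multiplies both tensor factors from the left, so the commutation relations you invoke are superfluous); well-definedness modulo $m$ uses $g^{m}=x^{md}$ and $\gamma^{m}=1$ exactly as you say; and your bigrading argument for linear independence is right, since $c_{jl}\in D_{2j+1,\,2l+1}$ and these homogeneous components are pairwise distinct and nonzero by \eqref{eq4.6} (nonvanishing of $u_{l-j}$, hence of $c_{jl}$, can also be read off from \eqref{eq4.3} with $i+j=m-1$, whose right-hand side is a nonzero element of the Liu subalgebra $B(m,\omega,\gamma)$). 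The only point worth making explicit is that the conclusion needs $m\geqslant 2$, which is implicit in the standing hypotheses ($\gamma=\xi^{2}$ is a primitive $m$-th root of $1$ and the examples arise with $\im(D)=m>1$); you flag this correctly. Comparing the two approaches: the paper's proof is shorter because the semisimplicity of $D'$ was already in hand, but it is soft and leans on Larson--Radford; yours is self-contained at the coalgebra level and yields strictly more information, namely an explicit $m^{2}$-dimensional simple matrix subcoalgebra inside the coradical of $D$ --- equivalently an $m$-dimensional simple $D$-comodule --- which quantifies the failure of pointedness rather than merely certifying it.
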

\begin{proof} Let $f:\; D\to D'=D/(y)$ be the canonical Hopf epimorphism. We need to show that $u_i\neq 0$ in $D'$ for all $i$ firstly. Since $y$ is a normal element of $D$, $u_i=0$ in $D'$ forces $u_i=yz_i$ for some $z_i$ in $D$. By the bigrading structure of $D$ and the fact the $y$ is nonzero-divisor, one can see that $z_i=\alpha_iu_{i-1}$ for some $\alpha_i\in D_{00}=k[x^{\pm 1}].$ Therefore,
$$u_i=yz_i=y\alpha_iu_{i-1}=\alpha_iyu_{i-1}=\alpha_i\phi_{i-1}u_i,$$ which contradicts the fact that $\phi_{i-1}=1-\gamma^{-i}x^d$ is not invertible in $[x^{\pm 1}]$.
Secondly, by \cite[Corollary 5.3.5]{Mo}, $D'$ is pointed
if $D$ is pointed.  We will show that $D'$ is not pointed. Otherwise, it is semisimple and pointed and so it is cosemisimple and pointed by \cite{LR}. This implies that $D'$ is cocommutative which is absurd. Therefore, $D$ is not pointed.
\end{proof}

A direct consequence of this proposition is that $D(m,d,\xi)$ is not isomorphic to any one of Hopf algebras listed in Subsection 2.3.
\begin{corollary} As a Hopf algebra, $D(m,d,\xi)$ is not isomorphic to any one of algebras listed in Subsection 2.3.
\end{corollary}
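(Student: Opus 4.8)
The plan is to obtain the corollary immediately from the preceding proposition, using the elementary fact that \emph{being pointed is invariant under Hopf algebra isomorphisms}. The strategy is: show that every Hopf algebra listed in Subsection 2.3 is pointed, recall that $D(m,d,\xi)$ is \emph{not} pointed by the proposition just established, and conclude that no Hopf algebra isomorphism can relate $D(m,d,\xi)$ to any member of that list.

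First I would check that each of the four families in Subsection 2.3 is pointed. The algebra $k[x]$ is connected, hence pointed, while $k[x^{\pm 1}]=k\mathbb{Z}$ and the infinite dihedral group algebra $k\mathbb{D}$ are group algebras and so are pointed. For the infinite dimensional Taft algebra $H(n,t,\xi)$, the element $g$ is group-like and $x$ satisfies $\D(x)=x\otimes g^t+1\otimes x$, so $H(n,t,\xi)$ is generated by group-likes and skew-primitives; its coradical is the group algebra of $\langle g\rangle$ and it is therefore pointed. Similarly the generalized Liu algebra $B(n,\omega,\gamma)$ is generated by the group-likes $x^{\pm 1},g$ and the skew-primitive $y$ with $\D(y)=y\otimes g+1\otimes y$, so its coradical is the group algebra of the group generated by $x$ and $g$, and it too is pointed.

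Then I would invoke that a Hopf algebra isomorphism sends the coradical onto the coradical and hence preserves pointedness. Since every algebra in the list is pointed but $D(m,d,\xi)$ is not, there can be no Hopf algebra isomorphism between $D(m,d,\xi)$ and any of them, which is precisely the claim. I expect no genuine obstacle: all the substance sits in the preceding non-pointedness proposition, and the verification that the listed algebras are pointed is routine once their group-like and skew-primitive generators are identified. The only minor care needed is to confirm that, for each listed family, the coradical really is the group algebra of its group-likes, which follows directly from the given presentations.
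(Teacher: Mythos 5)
Your proposal is correct and follows exactly the paper's own argument: the paper's proof of this corollary is the one-line observation that all Hopf algebras in Subsection 2.3 are pointed while $D(m,d,\xi)$ is not, with non-pointedness supplied by the immediately preceding proposition. Your additional verification that each listed family is pointed (connected, group algebra, or generated by group-likes and skew-primitives) is routine and sound, merely making explicit what the paper leaves implicit.
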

\begin{proof} All of Hopf algebras given in Subsection 2.3 are pointed while $D(m,d,\xi)$ is not.
\end{proof}

\begin{remark}  In practice, the assumption ``pointed" is always added  when we want to
classify Hopf algebras of lower GK-dimensions. As a matter of fact, all known examples are pointed
and it is widely believed that, at least for prime regular Hopf algerbas of GK-dimension one, these
Hopf algebras should be pointed automatically. Our new examples will change this naive understanding
since all  the new examples are not pointed!
\end{remark}

\section{The Hopf Subalgebra $\widetilde{H}$}

From now on, $H$ always denotes a  prime regular Hopf algebra of GK-dimension
one with $\io(H)=n>\im(H)=m>1$ and let $t:=n/m$.
The aim of this section is to construct a Hopf subalgebra
$\widetilde{H}$ of $H$ satisfying the conditions of Lemma \ref{l2.8}.

Recall that the letter $\zeta$ denotes
 the primitive $n$th root of $1$ in the definition of the
bigrading of $H$ given in Subsection 2.2:
$$H=\bigoplus_{0\leqslant i,j\leqslant n-1} H_{ij},$$ where $H_{ij}=H_i^l\cap
H_j^r$.

It is not hard to find that  \cite[Lemma 6.3]{BZ} still holds in our case and so we state it without proof.

\begin{lemma}\label{l3.3} Let $H=\bigoplus_{0\leqslant i,j\leqslant n-1} H_{ij}$. Then

\emph{(a)} $S(H_i^l)=H_{-i}^r$ and $S(H_{ij})=H_{-j,-i}$ \emph{(}where the
suffixes are interpreted \emph{mod} n\emph{)}.

\emph{(b)} If $i\neq j$, then $\epsilon(H_{ij})=0$.

\emph{(c)} $\epsilon(H_{ii})\neq 0$.
\end{lemma}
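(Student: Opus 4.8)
The plan is to handle the three parts in turn, using the winding-automorphism relation of Lemma \ref{l2.5}(b) for part (a), and the fact that $\epsilon$ intertwines both winding automorphisms with the integral homomorphism $\pi$ for parts (b) and (c).

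For (a), I would first show $S(H_i^l)\subseteq H_{-i}^r$. If $a\in H_i^l$, then $\Xi_\pi^l(a)=\zeta^i a$, hence $(\Xi_\pi^l)^{-1}(a)=\zeta^{-i}a$, and Lemma \ref{l2.5}(b) gives $\Xi_\pi^r(S(a))=S\big((\Xi_\pi^l)^{-1}(a)\big)=\zeta^{-i}S(a)$, so $S(a)\in H_{-i}^r$. The mirror relation $\Xi_\pi^l S=S(\Xi_\pi^r)^{-1}$ — which holds by the same computation with the roles of left and right interchanged, equivalently from the general identity $S\,\Xi_\pi^r=\Xi_{\pi\circ S^{-1}}^l S$ together with $\pi\circ S^{-1}=\pi\circ S$ in the character group — yields $S(H_j^r)\subseteq H_{-j}^l$ in exactly the same way. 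Since $S$ is a bijective anti-automorphism, I would then upgrade these inclusions to equalities: from $S\big(\bigoplus_i H_i^l\big)=S(H)=H=\bigoplus_r H_r^r$ and $S(H_i^l)\subseteq H_{-i}^r$, a comparison of the $\widehat{G_\pi^r}$-homogeneous components forces $S(H_i^l)=H_{-i}^r$, and symmetrically $S(H_j^r)=H_{-j}^l$. Because $S$ is bijective it preserves intersections, so $S(H_{ij})=S(H_i^l)\cap S(H_j^r)=H_{-i}^r\cap H_{-j}^l=H_{-j,-i}$, which is the second assertion of (a).

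For (b) and (c) the key observation is that, regarding $\pi$ as a character $H\to k$ via $H/\mathrm{r.ann}(\int_H^l)\cong k$, one has $\epsilon\circ\Xi_\pi^l=\epsilon\circ\Xi_\pi^r=\pi$; indeed $\epsilon(\Xi_\pi^l(a))=\sum\pi(a_1)\epsilon(a_2)=\pi(a)$ and likewise for $\Xi_\pi^r$. Thus for $a\in H_{ij}$ I obtain $\zeta^i\epsilon(a)=\pi(a)=\zeta^j\epsilon(a)$, so $(\zeta^i-\zeta^j)\epsilon(a)=0$; since $\zeta$ is a primitive $n$-th root of $1$ and $0\le i,j\le n-1$, the factor $\zeta^i-\zeta^j$ is nonzero when $i\neq j$, forcing $\epsilon(a)=0$, which proves (b). For (c) I would use that the $\widehat{G_\pi^l}$-grading is strong, so $H_i^l H_{-i}^l=H_0^l\ni 1$; writing $1=\sum_k a_k b_k$ with $a_k\in H_i^l$ and $b_k\in H_{-i}^l$ and applying $\epsilon$ gives $\sum_k\epsilon(a_k)\epsilon(b_k)=1\neq 0$, so $\epsilon(a_k)\neq 0$ for some $k$. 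Decomposing $a_k=\sum_j a_{k,j}$ along $H_i^l=\bigoplus_j H_{ij}$ and invoking (b), we get $\epsilon(a_k)=\epsilon(a_{k,i})$ with $a_{k,i}\in H_{ii}$, whence $\epsilon(a_{k,i})\neq 0$ and $\epsilon(H_{ii})\neq 0$.

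The counit computations in (b) and (c) are routine; the one point that needs care is the mirror relation $S(H_j^r)=H_{-j}^l$ used in (a). The subtlety is that Lemma \ref{l2.5}(b) is stated only for $\Xi_\pi^r$, and applying $S^{-1}$ to $S(H_i^l)=H_{-i}^r$ produces $S^{-1}(H_j^r)=H_{-j}^l$ rather than the statement for $S$; since $S^2\neq\Id$ in general, one cannot simply substitute. I therefore expect the main (though modest) obstacle to be justifying the left–right symmetric version of Lemma \ref{l2.5}(b), after which the intersection formula for $S(H_{ij})$ is immediate.
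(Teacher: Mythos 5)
Your proof is correct, and since the paper states this lemma without proof (deferring to \cite[Lemma 6.3]{BZ}), there is nothing for it to diverge from: your argument --- Lemma \ref{l2.5}(b) together with bijectivity of $S$ and a component comparison for (a), the identity $\epsilon\circ\Xi_\pi^l=\pi=\epsilon\circ\Xi_\pi^r$ for (b), and the strong grading $H_i^l H_{-i}^l=H_0^l\ni 1$ combined with (b) and the compatibility $H_i^l=\bigoplus_j H_{ij}$ for (c) --- is exactly the standard one underlying the cited result. Your flagged subtlety about the mirror relation $\Xi_\pi^l S=S(\Xi_\pi^r)^{-1}$ (not obtainable from Lemma \ref{l2.5}(b) by merely applying $S^{-1}$, since $S^2\neq\Id$ in general) is a genuine point of care, and your direct verification of it, as well as the uniqueness-of-convolution-inverse argument giving $\pi\circ S^{-1}=\pi\circ S$, is sound.
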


\begin{lemma}\label{l3.1}  Let $\Xi_\pi^l$ and $\Xi_\pi^r$ be the left and right winding automorphisms defined as in Subsection 2.2 respectively. Then $(\Xi_\pi^l)^m=(\Xi_\pi^r)^m.$
\end{lemma}
\begin{proof} Since $G_\pi^l$ and $G_\pi^r$ are cyclic groups, $G_\pi^l\cap
G_\pi^r$ is a cyclic subgroup of both $G_\pi^l$ and $G_\pi^r$. By
$\im(H)=|G_\pi^l/G_\pi^l\cap G_\pi^r|=m$, the order of $G_\pi^l\cap
G_\pi^r$ is $t$. But the order $t$ cyclic subgroup of $G_\pi^l$ and
$G_\pi^r$ is just $\langle(\Xi_\pi^l)^m\rangle$ and
$\langle(\Xi_\pi^r)^m\rangle.$ So there exists an integer $s,
1\leqslant s \leqslant t-1$, such that
$(\Xi_\pi^l)^m=(\Xi_\pi^r)^{sm}$. Now we show that $s=1$. Since $\epsilon(H_{11})\neq 0$ by Lemma \ref{l3.3}, we have $H_{11}\neq 0$. Then there exists
 $0\neq x \in H_{11}=\{x\in H|\Xi_\pi^l(x)=\zeta x,
\Xi_\pi^r(x)=\zeta x\}$, so
$$\zeta^mx=(\Xi_\pi^l)^m(x)=(\Xi_\pi^r)^{sm}(x)=\zeta^{sm}x.$$
Therefore $\zeta^m=\zeta^{sm}$, and hence $s=1$.
\end{proof}

\begin{lemma}\label{l3.2} For every $j$ with $1\leqslant j\leqslant n-1$, $H_{0j}\neq 0$ if and only if $j\equiv 0$ \emph{(mod} $t$\emph{)} for all $0\leqslant j\leqslant n-1$.
\end{lemma}
\begin{proof} Let $0\neq x\in H_{0j}$. Note that $H_{0j}=\{x\in H|\Xi_\pi^l(x)=x,
\Xi_\pi^r(x)=\zeta^jx\}$. By the above lemma, we have
$$x=(\Xi_\pi^l)^m(x)=(\Xi_\pi^r)^{m}(x)=\zeta^{jm}x.$$ This
implies $\zeta^{jm}=1$. So we get $j\equiv 0$ (mod
$t$). That is, if $j\not\equiv 0$ (mod $t$) then $ H_{0j} = 0$. Therefore
we can write
$$H_0^l=\bigoplus_{0\leqslant j \leqslant m-1}H_{0, jt}.$$ Now
it remains to show that each $H_{0, jt}\neq 0$ for all $0\leqslant j
\leqslant m-1$.

Let $s$ be the minimum positive integer such that $H_{0, st}\neq 0$. Such $s$ must exist. Indeed, if there is no such $s$, that is, $H_{0, jt}=0$ for all $1\leqslant j\leqslant
m-1$. Then $H_0^l=H_0$. Dually, $H_0^r=H_0=H_0^l$. Therefore
$H_0^l$ is a Hopf subalgebra of $H$ by Lemma \ref{l2.5} again. By the proofs of \cite[Propositions 4.2 and  4.3]{BZ}, we see that this can happen only in the following cases:  $H$ is isomorphic as a Hopf algebra either to the Taft algebra $H(n, 0,
\xi)$ or to the infinite dihedral group algebra $k\mathbb{D}$.  In either case,
$\im(H)=1$. This contradicts the hypothesis $\im(H)>1$. Next, we show that $s$ must be a factor of $m$, that is, $s|m$. If not, there exists a positive number $a$ such that $sa<m$ and $s(a+1)>m$. Since $H_{0}^{l}$ is a domain, $0\neq (H_{0,st})^{a+1}\subset H_{0,st(a+1)-mt}$ and thus $s(a+1)-m<s$ is a smaller number such that $H_{0,(s(a+1)-m)t}\neq 0$ which contradicts to the minimality of $s$. Therefore, $H_0^{l}=\bigoplus_{0\leqslant j\leqslant \frac{m}{s}-1}H_{0,jst}$ (by using the minimality of $s$ again) and thus to show the result it is enough to show that $s=1$.

We claim that $$H=\bigoplus_{0 \leqslant i\leqslant n-1,  0\leqslant j\leqslant \tfrac{m}{s}-1}H_{i, i+jst}.\;\;\;\;\;\;(\ast)$$
 At first, since $H_{ii}\neq 0$ for all $i$ by Lemma \ref{l3.3} and every nonzero homogeneous element is nonzero-divisor by \cite[Proposition 5.1(a)]{BZ}, we have $H_{i, i+jst}\supseteq H_{ii}H_{0, jst}\neq 0$ for all $0 \leqslant i\leqslant n-1$ and $0 \leqslant j\leqslant \tfrac{m}{s}-1$. Secondly, we show that $H_{ik}\neq 0$ if and only if $st|(i-k)$. We already know the ``if" part and we only need to show the ``only if" part. In fact, suppose $H_{ik}\neq 0$ and let $0\neq x\in H_{ik}=\{h\in H|\Xi_\pi^l(h)=\zeta^ih,
\Xi_\pi^r(h)=\zeta^kh\}$. Applying Lemma \ref{l3.1}, we have
$$\zeta^{im}x=(\Xi_\pi^l)^m(x)=(\Xi_\pi^r)^{m}(x)=\zeta^{km}x.$$
That is, $\zeta^{im}=\zeta^{km}$ and so  $i-k\equiv 0$ (mod $t$). So it is harmless to assume that $k=i+at$
for $a$ a positive number and we need to show that $s|a$. Applying \cite[Proposition 5.1(a)]{BZ} again, $H_{0,at}\supseteq H_{i, i+at}H_{n-i, n-i}\neq 0$ and thus $s|a$ since we already know that $H_0^{l}=\bigoplus_{0\leqslant j\leqslant \frac{m}{s}-1}H_{0,jst}$. This proves the equation $(\ast)$. By the equation $(\ast)$, it is not hard to see that $\im(H)=m/s$. Therefore, $s=1$.
\end{proof}

The proof of this lemma indeed implies the following result.
\begin{proposition}\label{p3.5} For every $i, j$ with $1\leqslant i, j\leqslant n-1$, $H_{ij}\neq 0$ if and only if $i-j\equiv 0$ \emph{(mod} $t$\emph{)} for all $0\leqslant i, j\leqslant n-1 $ .
\end{proposition}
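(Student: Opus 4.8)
The plan is to prove the two implications separately, treating the ``only if'' part as an immediate consequence of Remark \ref{r3.4} and reserving the real work for the ``if'' part.

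For the ``only if'' direction I would take any $0\neq a\in H_{ij}$ and feed it into the operator identity of Lemma \ref{l3.1}. By Remark \ref{r3.4} the integer $s$ there equals $1$, so $(\Xi_\pi^l)^m=(\Xi_\pi^r)^m$ as automorphisms of $H$. Applying both sides to $a$ gives $\zeta^{im}a=(\Xi_\pi^l)^m(a)=(\Xi_\pi^r)^m(a)=\zeta^{jm}a$, whence $\zeta^{(i-j)m}=1$. Since $\zeta$ is a primitive $n$-th root of $1$ and $n=tm$, this forces $n\mid (i-j)m$, i.e. $t\mid(i-j)$. This is exactly the computation already used in Lemma \ref{l3.2} and Remark \ref{r3.4}, now run for arbitrary $i,j$.

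For the ``if'' direction the difficulty is that $H$ need not be a domain, so a product of nonzero homogeneous elements could a priori vanish. The key observation I would isolate is that every nonzero element of $H_0^l$ is a non-zero-divisor in $H$. Indeed, $H=\bigoplus_i H_i^l$ is strongly graded (\cite[Theorem 2.5]{BZ}), so each $H_i^l$ is an invertible $H_0^l$-module (a standard feature of strongly graded rings, $H_i^l\otimes_{H_0^l}H_{-i}^l\cong H_0^l$), hence torsion-free over the commutative domain $H_0^l$ (Lemma \ref{l2.5}(c)). Therefore left multiplication by a nonzero $z\in H_0^l$ is injective on each $H_i^l$, and so on all of $H$.

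With this in hand the conclusion is short. Suppose $i\equiv j\pmod t$. Then $j-i\equiv 0\pmod t$, so by Lemma \ref{l3.2} we may choose $0\neq v\in H_{0,\,j-i}\subseteq H_0^l$ (indices mod $n$), and since $\epsilon(H_{ii})\neq 0$ we may choose $0\neq w\in H_{ii}$ by Lemma \ref{l3.3}(c). As $v\in H_0^l$ is a non-zero-divisor, $vw\neq 0$; comparing bidegrees, $vw\in H_{0,\,j-i}H_{ii}\subseteq H_{i,\,j}$, so $H_{ij}\neq 0$. The only genuinely non-routine point in the whole argument is the regularity of nonzero elements of $H_0^l$; once that is granted, both implications reduce to bookkeeping with the bigrading together with Lemmas \ref{l3.2} and \ref{l3.3}.
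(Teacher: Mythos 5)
Your proof is correct, and while the ``only if'' half coincides with the paper's (both run the $(\Xi_\pi^l)^m=(\Xi_\pi^r)^m$ computation from Remark~\ref{r3.4} on a homogeneous element), your ``if'' half takes a genuinely different route. The paper argues by contradiction entirely inside the commutative domain $H_0^l$: assuming $H_{i,i+jt}=0$, it gets $H_{n-i,n-i}H_{ii}H_{0,jt}=0$, notes $H_{n-i,n-i}H_{ii}\neq 0$ because $\epsilon$ is multiplicative and $\epsilon(H_{ii})\neq 0$ (Lemma~\ref{l3.3}(c)), observes $H_{n-i,n-i}H_{ii}\subseteq H_{00}\subseteq H_0^l$, and then the domain property of $H_0^l$ (Lemma~\ref{l2.5}(c)) together with $H_{0,jt}\neq 0$ gives the contradiction; the multiplication by $H_{n-i,n-i}$ is precisely the device that shifts the would-be-zero product into $H_0^l$, so no regularity of elements of $H_0^l$ acting on all of $H$ is ever needed. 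You instead prove that stronger fact directly --- nonzero elements of $H_0^l$ are non-zero-divisors on $H$, via strong grading, invertibility of each $H_i^l$ over $H_0^l$, hence projectivity and torsion-freeness over a domain --- and then conclude in one line from $0\neq v\in H_{0,j-i}$, $0\neq w\in H_{ii}$ that $0\neq vw\in H_{ij}$. Your key lemma is sound (it is essentially \cite[Proposition 5.1]{BZ}, which the paper itself invokes later, e.g.\ in the proof of Proposition~\ref{p4.6}, where torsion-freeness of the $H_{i,i+jt}$ is used), so what you buy is a reusable regularity statement and a direct rather than contradiction-based argument; what the paper's version buys is economy of hypotheses, since it needs only the counit and the domain $H_0^l$, not projectivity of the graded pieces.
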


This proposition also implies that
$$H_{it}^l=\bigoplus_{0\leqslant j\leqslant m-1 }H_{it, jt}\quad \text{and}\quad H_{jt}^r=\bigoplus_{0\leqslant i\leqslant m-1 }H_{it, jt}$$
for all $0\leqslant i, j\leqslant m-1$. Thus we can define
$\widetilde{H}$ as follows:
\begin{equation}\label{eq5.1}\widetilde{H}:=\bigoplus_{0\leqslant i, j\leqslant m-1 }
H_{it, jt}=\bigoplus_{0\leqslant i\leqslant m-1 }
H_{it}^l=\bigoplus_{0\leqslant j\leqslant m-1 } H_{jt}^r.\end{equation}

\begin{lemma} $\widetilde{H}$ is a Hopf subalgebra of $H$.
\end{lemma}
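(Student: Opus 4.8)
The plan is to verify that $\widetilde{H}=\bigoplus_{0\leqslant i,j\leqslant m-1}H_{it,jt}$ is closed under the algebra structure, the coalgebra structure, and the antipode of $H$, using the bigraded description already established. For the algebra structure, I would first observe that $1\in H_{00}=H_{0,0}$, so the unit lies in $\widetilde{H}$. Closure under multiplication follows from the compatibility of the bigrading with the product: since $H$ is a bigraded algebra, $H_{it,jt}H_{i't,j't}\subseteq H_{(i+i')t,(j+j')t}$, and because the indices $(i+i')t,(j+j')t$ are again multiples of $t$ (and the suffixes are read mod $n=mt$), the product stays inside $\widetilde{H}$. This is immediate from the definition of $\widetilde{H}$ as the span of the homogeneous components whose bidegrees are both divisible by $t$.

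Next I would check the coalgebra structure. The key input is Lemma \ref{l2.5}(a), which gives $\Delta(H_i^l)\subseteq H_i^l\otimes H$ and $\Delta(H_j^r)\subseteq H\otimes H_j^r$. Combining these, $\Delta(H_{ij})=\Delta(H_i^l\cap H_j^r)\subseteq (H_i^l\otimes H)\cap(H\otimes H_j^r)$, and refining through the full bigrading on $H\otimes H$ one obtains $\Delta(H_{ij})\subseteq\bigoplus_{p,q}H_{ip}\otimes H_{qj}$. Applying this with $i=i't$ and $j=j't$ and then invoking Proposition \ref{p3.5}, only those summands with $i't-p\equiv 0$ and $q-j't\equiv 0\pmod t$ survive, i.e. $p,q\in t\mathbb{Z}$; hence $\Delta(\widetilde{H})\subseteq\widetilde{H}\otimes\widetilde{H}$. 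The counit presents no difficulty: $\varepsilon$ restricts to $\widetilde{H}$ since $\widetilde{H}\supseteq H_{00}$ and $\varepsilon(H_{ij})=0$ when $i\neq j$ by Lemma \ref{l3.3}(b), so $\varepsilon|_{\widetilde{H}}$ is simply the restriction of the counit of $H$.

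For the antipode, I would use Lemma \ref{l3.3}(a), which states $S(H_{ij})=H_{-j,-i}$. Applying this to a component $H_{it,jt}$ of $\widetilde{H}$ gives $S(H_{it,jt})=H_{-jt,-it}$, whose bidegrees $(-jt,-it)$ are again multiples of $t$ modulo $n$; therefore $S(\widetilde{H})\subseteq\widetilde{H}$. Together with the unit, multiplication, comultiplication, counit and antipode closures, this shows $\widetilde{H}$ is a Hopf subalgebra of $H$. I do not expect a genuine obstacle here, since all the structural constraints have been packaged into the cited lemmas; the only point requiring mild care is the coalgebra step, where one must correctly intersect the one-sided coideal conditions of Lemma \ref{l2.5}(a) with the vanishing pattern of Proposition \ref{p3.5} to conclude that the image of $\Delta$ on $\widetilde{H}$ has both tensor factors supported on bidegrees divisible by $t$.
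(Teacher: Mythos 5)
Your proposal is correct and follows essentially the same route as the paper: closure of the product from the bigraded algebra structure, closure of $\Delta$ from Lemma \ref{l2.5}(a) combined with the vanishing pattern of Proposition \ref{p3.5} (which is exactly what makes $H_{it}^l$ and $H_{jt}^r$ land inside $\widetilde{H}$), and closure of $S$ from Lemma \ref{l3.3}(a). Your spelled-out refinement $\Delta(H_{it,jt})\subseteq\bigoplus_{p,q}H_{it,p}\otimes H_{q,jt}$ is just a more explicit version of the paper's one-line inclusion $\Delta(H_{it,jt})\subseteq\sum_{k,l}H_{kt}^l\otimes H_{lt}^r$, so no substantive difference.
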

\begin{proof} We need to verify the algebra structure,
coalgebra structure and the antipode condition of $\widetilde{H}$.
Clearly, by the bigraded structure of $H$, it is easy to
see that $\widetilde{H}$ is an algebra. Since each $H_i^l$ is a
right coideal of $H$ and $H_j^r$ is a left coideal of $H$ for all
${0\leqslant i, j\leqslant n-1 }$ (by Lemma \ref{l2.5}),
$\Delta(H_{it,jt})\subseteq\sum_{k,l}H_{kt}^l\otimes
H_{lt}^r\subseteq \widetilde{H}\otimes\widetilde{H}$. So
$\widetilde{H}$ is a coalgebra. Note that $S(H_{it, jt})=H_{-jt,
-it}\subset \widetilde{H}$ by Lemma \ref{l3.3}, thus $\widetilde{H}$ is a
Hopf subalgebra of $H$.
\end{proof}

\begin{lemma} $\widetilde{H}$ is regular and
\emph{gl.dim}$(\widetilde{H})=1$.
\end{lemma}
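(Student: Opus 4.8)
The plan is to exhibit $\widetilde{H}$ as the identity component of a strong grading on $H$ by a finite cyclic group and then let the finite global dimension of $H$ descend. Since ``regular'' means finite global dimension and gl.dim$(H)=1$ (as $H$ is prime regular of GK-dimension one), it suffices to prove gl.dim$(\widetilde{H})=1$; I would do this by establishing separately that $\widetilde{H}$ is not semisimple and that gl.dim$(\widetilde{H})\le 1$.

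The starting point is the strong $\widehat{G_\pi^l}$-grading $H=\bigoplus_{0\le i\le n-1}H_i^l$ of \cite[Theorem 2.5(b)]{BZ}, for which $H_i^lH_j^l=H_{i+j}^l$. First I would coarsen it along the subgroup $\langle\chi^t\rangle\cong\Z/m\Z$ of $\widehat{G_\pi^l}\cong\Z/n\Z$: grouping the homogeneous components according to the cosets of $\langle\chi^t\rangle$ gives a grading $H=\bigoplus_{\bar a\in\Z/t\Z}H_{\bar a}$, and the relations $H_i^lH_j^l=H_{i+j}^l$ show at once that $H_{\bar a}H_{\bar b}=H_{\overline{a+b}}$, so this coarser grading is again strong. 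Its identity component is $\bigoplus_{0\le i\le m-1}H_{it}^l=\widetilde{H}$. Invoking the structure theory of strongly graded rings (Dade's theorem), I would then record two consequences: $H$ is finitely generated projective as a left (and as a right) $\widetilde{H}$-module, and the projection of $H$ onto its identity component is a $\widetilde{H}$-bimodule retraction of the inclusion, so that $\widetilde{H}$ is a direct summand of $H$ as a $\widetilde{H}$-bimodule.

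With these two facts the rest is formal. Because $H$ is a finitely generated $\widetilde{H}$-module containing $\widetilde{H}$, we have $\operatorname{GKdim}(\widetilde{H})=\operatorname{GKdim}(H)=1$, so $\widetilde{H}$ is not Artinian and gl.dim$(\widetilde{H})\ge 1$. For the upper bound, I would take an arbitrary left $\widetilde{H}$-module $M$: tensoring the split inclusion $\widetilde{H}\hookrightarrow H$ with $M$ realizes $M$ as a left $\widetilde{H}$-module direct summand of $H\otimes_{\widetilde{H}}M$, while $H\otimes_{\widetilde{H}}M$, being a left $H$-module, has an $H$-projective resolution of length at most gl.dim$(H)=1$. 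Restricting scalars along $\widetilde{H}\subseteq H$ and using the projectivity of $H$ over $\widetilde{H}$ turns this into a $\widetilde{H}$-projective resolution, whence $\mathrm{p.dim}_{\widetilde{H}}(H\otimes_{\widetilde{H}}M)\le 1$ and therefore $\mathrm{p.dim}_{\widetilde{H}}(M)\le 1$. As $M$ was arbitrary this gives gl.dim$(\widetilde{H})\le 1$, hence gl.dim$(\widetilde{H})=1$ and $\widetilde{H}$ is regular.

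The step I expect to be the main obstacle is the descent of the grading: one must check carefully that the relations $H_i^lH_j^l=H_{i+j}^l$ survive the coarsening to $\Z/t\Z$ and that the identity component of the coarsened grading is exactly $\widetilde{H}$, and then justify the structural consequences of Dade's theorem---finite projectivity of $H$ over $\widetilde{H}$ together with the bimodule splitting. Once these are in place the homological descent of finite global dimension is routine.
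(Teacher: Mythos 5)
Your proposal is correct and follows essentially the same route as the paper: the paper likewise regards $H=\bigoplus_{0\leqslant i\leqslant t-1}\widetilde{H}H_i^l$ as (strongly) graded with identity component $\widetilde{H}$, deduces that $H$ is projective over $\widetilde{H}$ with $\widetilde{H}$ a direct summand, and then uses exactly your chain $\mathrm{p.dim}_{\widetilde{H}}(V)\leqslant \mathrm{p.dim}_{\widetilde{H}}(H\otimes_{\widetilde{H}}V)\leqslant \mathrm{p.dim}_H(H\otimes_{\widetilde{H}}V)\leqslant 1$, finishing with non-semisimplicity of $\widetilde{H}$. The only cosmetic difference is that the paper justifies projectivity via $H_1^l$ being an affine invertible $H_0^l$-bimodule rather than by citing Dade's theorem, which is the same ingredient in different packaging.
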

\begin{proof} Deducing from the fact that $H=\bigoplus_{0\leqslant i\leqslant n-1}H_i^l=\bigoplus_{0\leqslant j\leqslant
n-1}H_j^r$ is a strongly graded algebra,  we get a new grading for $H$: $$H=\bigoplus_{0\leqslant
i\leqslant t-1}\widetilde{H} H_i^l=\bigoplus_{0\leqslant i\leqslant
t-1}\widetilde{H} (H_1^l)^i.$$ Since $H_1^l$ is an affine invertible
$H_0^l$-bimodule, $\widetilde{H}H_i^l$ is an affine invertible
$\widetilde{H}$-bimodule. Therefore, $H$ is a
projective $\widetilde{H}$-module and $\widetilde{H}$ is a direct
summand of $H$ as an $\widetilde{H}$-module. Thus, if $V$ is any
$\widetilde{H}$-module, then
$$\text{p.dim}_{\widetilde{H}}(V)\leqslant
\text{p.dim}_{\widetilde{H}}(H\otimes_{\widetilde{H}}V) \leqslant
\text{p.dim}_H(H\otimes_{\widetilde{H}}V),$$ where the second
inequality holds because $H$ is a projective $\widetilde{H}$-module.
Therefore, gl.dim$(\widetilde{H})\leqslant 1$. Since $\widetilde{H}$ is not semisimple (otherwise, it is finite
dimensional), gl.dim$\widetilde{H}=1$.
\end{proof}

\begin{lemma} $\widetilde{H}$ is prime and
PI-deg\emph{(}$\widetilde{H}$\emph{)}=$m$.
\end{lemma}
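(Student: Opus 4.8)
The plan is to pin down $\text{PI-deg}(\widetilde H)$ to be exactly $m$ and then read off primeness from the PI-degree, exactly as in the proof of Lemma \ref{d4.7}. First I would check that $\widetilde H$ satisfies the hypotheses of \cite[Proposition 5.1]{BZ}. Restricting the strong $\widehat{G_\pi^l}$-grading of $H$ to the degrees that are multiples of $t$, Proposition \ref{p3.5} shows that $\widetilde H=\bigoplus_{i=0}^{m-1}H_{it}^l$ is strongly graded by a cyclic group of order $m$, with identity component $H_0^l$. By Lemma \ref{l2.5}(c) the latter is an affine commutative Dedekind domain, and $\widetilde H$ is itself affine (it is a Hopf subalgebra over which $H$ is module-finite by the previous lemma, so the Artin--Tate lemma applies). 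Thus \cite[Proposition 5.1]{BZ} gives the upper bound $\text{PI-deg}(\widetilde H)\leqslant m$, the order of the grading group.

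For the reverse inequality I would use the complementary strong grading established in the proof of the previous lemma, namely $H=\bigoplus_{i=0}^{t-1}\widetilde H(H_1^l)^i$, which exhibits $H$ as strongly $\Z/t$-graded over $\widetilde H$ with each component $\widetilde H(H_1^l)^i$ an invertible $\widetilde H$-bimodule; in particular $H$ is a projective right $\widetilde H$-module of rank $t$. Left multiplication embeds $H$ into $\operatorname{End}_{\widetilde H}(H_{\widetilde H})$, and a generic-rank computation (localising at the centre so that the rank-$t$ projective becomes free) identifies the PI-degree of this endomorphism ring as $t\cdot\text{PI-deg}(\widetilde H)$. Since $H$ is prime of PI-degree $\io(H)=n$ by \cite[Theorem 7.1]{LWZ}, comparing degrees yields $n\leqslant t\cdot\text{PI-deg}(\widetilde H)$, whence $\text{PI-deg}(\widetilde H)\geqslant n/t=m$. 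Together with the upper bound this forces $\text{PI-deg}(\widetilde H)=m$.

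Finally, since $\text{PI-deg}(\widetilde H)$ equals the order $m$ of the grading group, \cite[Proposition 5.1]{BZ}(d) applies and shows that $\widetilde H$ is prime, completing the proof. The main obstacle is the lower bound: the relation $\text{PI-deg}(H)=t\cdot\text{PI-deg}(\widetilde H)$ must be justified \emph{without} presupposing that $\widetilde H$ is prime, so the generic-rank argument has to be run inside the Goldie quotient ring of the prime algebra $H$ rather than of $\widetilde H$, exploiting that the invertible $\widetilde H$-bimodules $\widetilde H(H_1^l)^i$ become free of rank one after localising at the centre of $H$. An alternative, closer to the template of Lemma \ref{d4.7}, is to combine the regularity of $\widetilde H$ with \cite[Lemma 5.3]{LWZ}, giving $\io(\widetilde H)\leqslant\text{PI-deg}(\widetilde H/\widetilde P_0)\leqslant\text{PI-deg}(\widetilde H)$ for $\widetilde P_0$ the minimal prime contained in $\ker(\epsilon)$; this route reduces everything to the identity $\io(\widetilde H)=m$, which in turn requires computing the homological integral of $\widetilde H$ and relating it, through the projective extension $\widetilde H\subseteq H$, to that of $H$.
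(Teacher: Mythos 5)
Your overall strategy (upper bound from the strong grading, lower bound from a degree count in the extension $\widetilde{H}\subseteq H$, then primeness via \cite[Proposition 5.1 (d)]{BZ}) is genuinely different from the paper's proof, which is a one-line citation: since $\text{PI-deg}(H)=\io(H)=n$, the paper observes $\widetilde{H}=\bigoplus_{0\leqslant i\leqslant m-1}H_{it}^l$ and invokes \cite[Corollary 5.1 (b)]{BZ}, which yields primeness and $\text{PI-deg}(\widetilde{H})=m$ in one stroke. The engine behind that corollary is that the cyclic group $K=\{\kappa_i\}$ acts faithfully on $Q_0^l$ precisely because $\text{PI-deg}(H)=n$, and the order-$m$ subgroup $\{\kappa_{it}\}$ then still acts faithfully, so the faithfulness criterion of \cite[Proposition 5.1]{BZ} applies to the strongly $\Z/m\Z$-graded ring $\widetilde{H}$ with identity component the Dedekind domain $H_0^l$ of Lemma \ref{l2.5}; no separate upper/lower bound comparison is made. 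Your upper bound and your final appeal to part (d) are fine and match the template the paper itself uses for $D(m,d,\xi)$ in Lemma \ref{d4.7}; affineness of $\widetilde{H}$ is even easier than Artin--Tate, since $\widetilde{H}$ is a finitely generated module over the affine commutative algebra $H_0^l$.

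The genuine gap is in your lower bound, at the step you yourself flag: ``localising at the centre so that the rank-$t$ projective becomes free.'' After inverting $Z(H)\setminus\{0\}$, the algebra $\widetilde{H}\otimes_{Z(H)}Q(Z(H))$ is merely a finite-dimensional semilocal algebra, and over such an algebra an invertible bimodule need \emph{not} be free as a one-sided module: a Morita self-equivalence can permute non-isomorphic blocks. For instance, over $A=k\times M_2(k)$ with simples $S_1,S_2$ there is an invertible bimodule which as a right module is $S_2\oplus S_1^{\oplus 2}$ --- not free, and not even of the same dimension as $A$. So after your localisation $H$ need not become free of rank $t$ over $\widetilde{H}$, the endomorphism ring need not be a $t\times t$ matrix ring, and $n\leqslant t\cdot\text{PI-deg}(\widetilde{H})$ does not follow as written; note also that your asserted \emph{equality} $\text{PI-deg}(\operatorname{End}_{\widetilde{H}}(H))=t\cdot\text{PI-deg}(\widetilde{H})$ would presuppose the primeness of $\widetilde{H}$ you are trying to prove, whereas only the inequality ``$\leqslant$'' is needed or available. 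The correct multiplicative set to invert is $H_0^l\setminus\{0\}$, which is the mechanism of \cite[Section 5]{BZ} recalled in Section 6 of the paper: there $Q_0^lH_i^l=Q_0^lu_i$ with each $u_i$ invertible in the localisation, so the localisation of $H$ is a crossed product of $\Z/n\Z$ over the field $Q_0^l$ and is honestly free of rank $t$ over the localisation of $\widetilde{H}$; since multilinear identities pass to Ore localisations, and $M_t(-)$ of an algebra satisfying the identities of $M_s(k)$ satisfies those of $M_{ts}(k)$, primeness of $H$ then gives $n\leqslant t\cdot\text{PI-deg}(\widetilde{H})$. Finally, your fallback route through $\io(\widetilde{H})=m$ is circular in the paper's logical order: that identity is obtained only in Proposition \ref{t3.9}, as a consequence of the present lemma.
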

\begin{proof}
Note that $\widetilde{H}=\bigoplus_{0\leqslant i\leqslant m-1 }
H_{it}^l$. Therefore, the lemma follows directly from \cite[Corollary 5.1 (b)]{BZ}.
\end{proof}

\begin{proposition} \label{t3.9} Let $\widetilde{H}$ be the algebra constructed as \eqref{eq5.1}. Then it is isomorphic as a Hopf algebra either to the Taft algebra $H(m, 1,
\xi)$ or to the generalized Liu algebra $B(m, \omega, \gamma)$. As a
consequence, $\io(\widetilde{H})=\im(\widetilde{H})=m$.
\end{proposition}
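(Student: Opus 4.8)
The plan is to first upgrade the properties already obtained for $\widetilde{H}$ to the full statement ``affine prime regular Hopf algebra of GK-dimension one'', and then to exhibit an explicit character $\mu$ witnessing the hypothesis of Lemma \ref{l2.8}. By the three preceding lemmas, $\widetilde{H}$ is a prime Hopf subalgebra of $H$ which is regular with $\mathrm{gl.dim}\,\widetilde{H}=1$ and PI-deg$(\widetilde{H})=m$. For the two remaining properties I would use the sandwich $H_0^l\subseteq \widetilde{H}\subseteq H$, which holds because the $i=0$ strip $\bigoplus_{0\leqslant j\leqslant m-1}H_{0,jt}$ of $\widetilde{H}$ is precisely $H_0^l$ (by Lemma \ref{l3.2}). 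Since the $\widehat{G_\pi^l}$-grading is strong, each $H_i^l$ is an invertible, hence finitely generated, $H_0^l$-bimodule, so $H$ is module-finite over the affine commutative Dedekind domain $H_0^l$; as $\widetilde{H}$ is an $H_0^l$-submodule of $H$ and $H_0^l$ is noetherian, $\widetilde{H}$ is module-finite over $H_0^l$ and therefore affine. The same sandwich gives $1=\mathrm{GKdim}(H_0^l)\leqslant \mathrm{GKdim}(\widetilde{H})\leqslant \mathrm{GKdim}(H)=1$, so $\mathrm{GKdim}(\widetilde{H})=1$.

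For the condition of Lemma \ref{l2.8} I would take $\mu:=\pi|_{\widetilde{H}}$, the restriction to $\widetilde{H}$ of the algebra homomorphism $\pi\colon H\to k$ defining $\int_H^l$. Because $\widetilde{H}$ is a Hopf subalgebra, its comultiplication is the restriction of that of $H$, so for $a\in\widetilde{H}$ one has $\Xi_\mu^l(a)=\sum\pi(a_1)a_2=\Xi_\pi^l(a)$ and likewise $\Xi_\mu^r=\Xi_\pi^r|_{\widetilde{H}}$. Thus $\Xi_\mu^l$ and $\Xi_\mu^r$ act on the component $H_{it,jt}$ by the scalars $\zeta^{it}$ and $\zeta^{jt}$ respectively. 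Since $\zeta^t$ is a primitive $m$-th root of $1$ and $(\Xi_\mu^l)^m$ acts trivially on every $H_{it,jt}$, while $\Xi_\mu^l$ already acts by $\zeta^t\neq 1$ on $H_{t,0}$ (which is nonzero by Proposition \ref{p3.5}), the automorphism $\Xi_\mu^l$ has order exactly $m$; hence $|G_\mu^l|=m=$ PI-deg$(\widetilde{H})$.

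It then remains to check $G_\mu^l\cap G_\mu^r=\{1\}$. If $(\Xi_\mu^l)^a=(\Xi_\mu^r)^b$, then comparing the two sides on the components $H_{t,0}$ and $H_{0,t}$ (both nonzero by Proposition \ref{p3.5}) forces $\zeta^{ta}=1$ and $\zeta^{tb}=1$, i.e. $m\mid a$ and $m\mid b$; as $G_\mu^l$ and $G_\mu^r$ both have order $m$, this element is the identity. Therefore $\widetilde{H}$ satisfies the hypotheses of Lemma \ref{l2.8}, which yields $\widetilde{H}\cong H(m,1,\xi)$ or $\widetilde{H}\cong B(m,\omega,\gamma)$ together with $\io(\widetilde{H})=\im(\widetilde{H})$. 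Finally, since $\widetilde{H}$ is prime regular of GK-dimension one, $\io(\widetilde{H})=$ PI-deg$(\widetilde{H})=m$, so $\io(\widetilde{H})=\im(\widetilde{H})=m$.

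The main obstacle, and the conceptual point of the whole section, is the order-$m$ and trivial-intersection computation: everything hinges on the nonvanishing of the ``off-diagonal'' pieces $H_{t,0}$ and $H_{0,t}$ supplied by Proposition \ref{p3.5}. These are exactly the components that separate the left winding action from the right one, so the construction of $\widetilde{H}$ has been arranged precisely so that $\mu=\pi|_{\widetilde{H}}$ sees $G_\mu^l\cap G_\mu^r=\{1\}$ and Lemma \ref{l2.8} becomes applicable.
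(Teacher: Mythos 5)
Your proposal is correct and follows essentially the same route as the paper: restrict the winding automorphisms attached to $\pi$ to $\widetilde{H}$, use the bigrading to show $|G_\mu^l|=m=\text{PI-deg}(\widetilde{H})$, verify $G_\mu^l\cap G_\mu^r=\{1\}$ via nonvanishing off-diagonal components (you test on $H_{t,0}$ and $H_{0,t}$ where the paper uses $H_{tt}$ and $H_{0,t}$ --- an immaterial variation), and then invoke Lemma \ref{l2.8}. Your only additions are welcome ones: you spell out the ``affine'' and ``GK-dimension one'' claims via module-finiteness over $H_0^l$, which the paper dismisses as clear from the preceding lemmas.
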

\begin{proof}By the above three lemmas,
$\widetilde{H}$ is an affine prime regular Hopf algebra, and it is
clear that $\widetilde{H}$ is of GK-dimension one. Denote the restriction
of the actions of $\Xi_\pi^l$ and $\Xi_\pi^r$ to $\widetilde{H}$ by
$\Gamma^l$ and $\Gamma^r$, respectively. Since
$\widetilde{H}=\bigoplus_{0\leqslant i\leqslant m-1 } H_{it}^l$, we
can see that for each $0\leqslant i\leqslant m-1$ and any $0\neq x
\in H_{it}^l$,
$$(\Gamma^l)^m(x)=\zeta^{itm}x=x.$$
This implies that the group $\langle\Gamma^l\rangle$ has order $m$.
Similarly, $|\langle\Gamma^r\rangle|=m$. We claim that
$$\langle\Gamma^l\rangle\cap \langle\Gamma^r\rangle={1}.$$
In fact, if $(\Gamma^l)^i=(\Gamma^r)^j$ for some $0\leqslant i,
j\leqslant m-1$. Choose $0\neq x\in H_{tt}$, we find
$$\zeta^{ti}x=(\Gamma^l)^i(x)=(\Gamma^r)^j(x)=\zeta^{tj}x$$ which implies
$i=j$. Let $0\neq y\in H_{0,t}$, then
$$y=(\Gamma^l)^i(y)=(\Gamma^r)^j(y)=\zeta^{tj}y$$ forces
$j=0$. Thus we get $i=j=0$, i.e., $\langle\Gamma^l\rangle\cap
\langle\Gamma^r\rangle={1}$. Therefore, Lemma \ref{l2.8} is applied. So,
$\io(\widetilde{H})=\im(\widetilde{H})=m$ and
 $\widetilde{H}$ is isomorphic as a Hopf algebra either to
the Taft algebra $H(m, 1, \xi)$ or to the generalized Liu algebra
$B(m, \omega, \gamma)$.
\end{proof}

\section{From $\widetilde{H}$ to $H$}
This section is to build some general relations between  $\widetilde{H}$ and $H$. Our final aim is to show that
$\widetilde{H}$ can determine the structures of $H$ entirely.
We start with the following definition.
\begin{definition} We call $H$ is \emph{primitive} \emph{(}respectively, \emph{group-like}\emph{)} if
$\widetilde{H}$ is primitive, i.e., $\widetilde{H}\cong H(m, 1, \xi)$ \emph{(}respectively, if $\widetilde{H}$ is group-like, i.e., $\widetilde{H}\cong B(m, \omega,\gamma)$\emph{)}.
\end{definition}
By Proposition \ref{t3.9}, $H$ is either primitive or group-like. By definition, $\widetilde{H}=\bigoplus_{0\leqslant i,
j\leqslant m-1 } H_{it, jt}$ is bigraded automatically. Meanwhile, $\widetilde{H}$ has left integrals $\int_{\widetilde{H}}^l$ and
from which we also can construct another bigrading on $\widetilde{H}$ just as in Subsection 2.2.

\begin{lemma}\label{l6.2} Under a suitable choice of $\zeta$, these two bigradings on $\widetilde{H}$  are the same.
\end{lemma}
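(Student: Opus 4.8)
The plan is to reduce the comparison of the two bigradings to a comparison of two characters of $\widetilde{H}$. First I would observe that the inherited bigrading is governed by the automorphisms $\Gamma^l=\Xi_\pi^l|_{\widetilde{H}}$ and $\Gamma^r=\Xi_\pi^r|_{\widetilde{H}}$. Since $\widetilde{H}$ is a Hopf subalgebra, $\D$ maps $\widetilde{H}$ into $\widetilde{H}\otimes\widetilde{H}$, so for $a\in\widetilde{H}$ we have $\Gamma^l(a)=\sum\pi(a_1)a_2=\Xi_\mu^l(a)$, where $\mu:=\pi|_{\widetilde{H}}\colon\widetilde{H}\to k$ is the restricted character; likewise $\Gamma^r=\Xi_\mu^r$. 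Thus the inherited bigrading is exactly the bigrading attached to the character $\mu$. The intrinsic bigrading, on the other hand, is attached to the character $\widetilde{\pi}$ arising from $\int_{\widetilde{H}}^l$. Hence it suffices to show that $\mu$ and $\widetilde{\pi}$ produce the same bigrading after a suitable choice of primitive $m$-th root.

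Next I would pin the two characters down using Proposition \ref{t3.9}, which gives $\widetilde{H}\cong H(m,1,\xi)$ or $\widetilde{H}\cong B(m,\omega,\gamma)$. In either algebra every character annihilates each skew-primitive generator, because the defining commutation relation of that generator with $g$ carries the nontrivial root $\xi$ (resp. $\gamma$); thus $\mu$ and $\widetilde{\pi}$ agree, both vanishing, on the skew-primitive generators. In the group-like (Liu) case there is in addition the affine grouplike $x$ generating $\widetilde{H}\cap H_{00}=H_{00}=H_0$; since $\Xi_\mu^{l}=\Xi_\pi^{l}|_{\widetilde{H}}$ and $\Xi_\mu^{r}=\Xi_\pi^{r}|_{\widetilde{H}}$, this element lies in $\mu$-degree $(0,0)$, so $\mu(x)=1$, while the explicit integral computation of Subsection 2.3 gives $\widetilde{\pi}(x)=1$ as well. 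On the remaining grouplike $g$ both characters take primitive $m$-th root values, because $|G_\mu^l|=|G_{\widetilde{\pi}}^l|=m$. Consequently $\mu$ and $\widetilde{\pi}$ differ only by a primitive-root factor on $g$, so that $\mu=\widetilde{\pi}^{*k}$ (the $k$-fold convolution power) for some $k$ coprime to $m$.

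To conclude I would use that the left winding automorphism of the $k$-fold convolution power $\widetilde{\pi}^{*k}$ is $(\Xi_{\widetilde{\pi}}^l)^k$, and similarly on the right; hence $\Xi_\mu^l=(\Xi_{\widetilde{\pi}}^l)^k$ and $\Xi_\mu^r=(\Xi_{\widetilde{\pi}}^r)^k$ with one and the same exponent $k$. Therefore the $\mu$- and $\widetilde{\pi}$-eigenspace decompositions of $\widetilde{H}$ coincide as families of subspaces, the homogeneous components being matched after scaling the bidegree labels by the unit $k$ modulo $m$. Choosing the primitive $m$-th root $\rho$ that defines the intrinsic bigrading so that $\rho^{k}$ equals the root $\zeta^{t}$ governing the inherited bigrading then makes $\widetilde{H}^{\mu}_{it,jt}=\widetilde{H}^{\widetilde{\pi}}_{i,j}$ for all $i,j$, which is the asserted equality of bigradings.

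The delicate point, and the reason a single rechoice of the root suffices, is that the left and right gradings are twisted by the \emph{same} exponent $k$; this is precisely what the single-character relation $\mu=\widetilde{\pi}^{*k}$ guarantees, as opposed to the a priori possibility of independent powers on the two sides. Establishing that relation in turn rests on the equality $\mu(x)=\widetilde{\pi}(x)=1$ on the affine grouplike, for which the identity $\widetilde{H}^{\mu}_{00}=H_{00}$ is the crucial input. The group-like case is where this matters, since there a character carries an extra degree of freedom on $x$ that must be shown to be trivial; the primitive (Taft) case is immediate because $x$ is then itself skew-primitive and so automatically killed by both characters.
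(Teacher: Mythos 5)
There is a genuine gap, and it sits exactly where you yourself flagged ``the crucial input'': the claim that in the group-like case $\mu(x)=1$ because ``$x$ generates $\widetilde{H}\cap H_{00}=H_{00}$''. The isomorphism $\widetilde{H}\cong B(m,\omega,\gamma)$ of Proposition \ref{t3.9} is an abstract Hopf algebra isomorphism; nothing proved up to that point places the group-like generator $x$ of $B(m,\omega,\gamma)$ inside the inherited $(0,0)$-component $H_{00}$. That placement is precisely the $(0,0)$-instance of the lemma being proved (it is what Remark \ref{r4.3} later records as a consequence), so invoking it here is circular. Moreover the gap is not cosmetic: the properties of $\mu$ you actually verify --- $\mu$ kills the skew-primitive, $|G_\mu^l|=m$, and $G_\mu^l\cap G_\mu^r=\{1\}$ --- do \emph{not} force $\mu(x)=1$, nor that $\mu(g)$ is primitive (the order of $\Xi_\mu^l$ is the least common multiple of the orders of $\mu(x)$ and $\mu(g)$, so your ``primitive root on $g$'' step already presumes $\mu(x)=1$). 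Concretely, on $B(2,2,-1)$ the character $\nu$ with $\nu(x)=\nu(g)=-1$, $\nu(y)=0$ satisfies all three conditions ($\Xi_\nu^l\colon x\mapsto -x,\ g\mapsto -g,\ y\mapsto y$ and $\Xi_\nu^r\colon x\mapsto -x,\ g\mapsto -g,\ y\mapsto -y$ generate order-$2$ groups with trivial intersection, while $\mathrm{PI}\text{-}\mathrm{deg}=2$), yet $\nu\notin\langle\widetilde{\pi}\rangle$ and its bigrading cannot be made standard by any choice of root: $x^{-1}g$ has $\nu$-bidegree $(0,0)$ and $(x^{-1}g)^2=x^{-2}g^2=1$, so the $(0,0)$-component contains nontrivial idempotents and is not a domain, whereas $B_{00}=k[x^{\pm1}]$ is. Hence the relation $\mu=\widetilde{\pi}^{*k}$ cannot be extracted from the character formalism alone; it requires the special structural features of the inherited grading --- above all that $H_{00}=H_0$ is a commutative domain (Lemma \ref{l2.5}(c)), together with Proposition \ref{p3.5} --- plus an actual argument converting them into $\mu(x)=1$. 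Such an argument exists (if $\mu(x)\neq 1$ or $\mu(g)$ is not a primitive $m$-th root, then $H_{00}$ contains a unit $x^ag^c$ with $c\not\equiv 0 \pmod m$, and since $g^m=x^\omega$ a suitable power of it equals a power of $x$ with exponents sharing a common factor, yielding a torsion unit and hence zero divisors in $H_{00}$), but you never make it: you state the domain identity and then use it only through the unproved identification of $x$ with a generator of $H_{00}$.

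For comparison, the paper takes a different route entirely: it does not compare the two characters, but observes that \emph{both} bigradings arise from characters satisfying the condition $(\star)$ of Lemma \ref{l2.8}, and appeals to the reconstruction carried out in \cite[Section 6]{BZ}, where the generators of $H(m,1,\xi)$ resp.\ $B(m,\omega,\gamma)$ are built adapted to the given bigrading, so that after a suitable choice of $\zeta$ each bigrading becomes the standard one of Subsection 2.3, and they therefore coincide. Your endgame --- that $\mu=\widetilde{\pi}^{*k}$ gives $\Xi_\mu^l=(\Xi_{\widetilde{\pi}}^l)^k$ and $\Xi_\mu^r=(\Xi_{\widetilde{\pi}}^r)^k$ with the \emph{same} $k$, after which choosing $\rho$ with $\rho^k=\zeta^t$ matches the components --- is correct and would yield a genuinely more self-contained proof than the paper's citation, but only once $\mu(x)=1$ (equivalently $\mu\in\langle\widetilde{\pi}\rangle$) is actually established; as written, that pivotal step is assumed rather than proved.
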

\begin{proof} The proof is indeed implicit in the arguments given in \cite[Section 6]{BZ}.
For completeness, we prove it here.
We start with a general form.
Let $A$ be a prime regular Hopf algebra
of GK-dimension one with an algebra homomorphism $\mu: A\to k$ (note that
$\mu$ need not be the algebra homomorphism $\pi:A\to
A/\text{r.ann}(\int_A^l$)). Denote the groups of the left and
right winding automorphisms $\Xi_\mu^l$ and $\Xi_\mu^r$ associated
to $\mu:A\to k$ by $G_\mu^l$ and $G_\mu^r$ respectively. Fix a
primitive $n$th root $\zeta$ of $1$, and define $\chi\in
\widehat{G_\mu^l}$ and $\eta\in \widehat{G_\mu^r}$ by setting
$$\chi(\Xi_\mu^l)=\zeta \quad \text{and} \quad
\eta(\Xi_\mu^r)=\zeta.$$ Assume that $G_\mu^l$ and $G_\mu^r$ satisfy
$$|G_\mu^l|=\text{PI-deg}(A)=n \quad \text{and} \quad G_\mu^l \cap
G_\mu^r=\{1\} \quad \quad  \quad \quad  \quad (\star).$$  Then
there is a bigrading on
$$A=\bigoplus_{0\leqslant i, j\leqslant n-1 } A_{ij},$$
where $A_{ij}=\{a\in A|\Xi_\mu^l(a)=\zeta^ia,\;\Xi_\mu^r(a)=\zeta^ja\}$. It is proved in \cite[Section 6]{BZ} that
$A\cong H(n,1,\xi)$ or $A\cong B(n,\omega,\gamma)$ and under a suitable choice of $\zeta$, $A_{ij}$ is exactly
$H(n,1,\xi)_{ij}$ or $B(n,\omega,\gamma)_{ij},$ where $H(n,1,\xi)_{ij}$ and $B(n,\omega,\gamma)_{ij}$ are homogeneous
components of the bigrading constructed in Subsection 2.2. Now the proof is completed by noting that the algebra homomorphism $\mu=\pi|_{\widetilde{H}}:\widetilde{H}\to k$ (where $\pi:H\to H/\text{r.ann}(\int_H^l)$) induces the left and right winding automorphisms satisfying the condition $(\star)$ by Proposition \ref{t3.9}.
\end{proof}

\begin{remark}\label{r4.3}By Lemma \ref{l6.2},  we can freely use the calculation results of
$H_{ij}$ to $$\widetilde{H}:=\bigoplus_{0\leqslant i, j\leqslant m-1
} H_{it, jt}$$ as in Subsection 2.3. That is, if $H$ is primitive,
then we can assume that $H_{it, jt}=k[x^m]x^{j-i}g^i$; and if $H$
is group-like, then we can take that $H_{it, jt}=k[x^{\pm
1}]y^{j-i}g^i$.
\end{remark}


Now we denote the fraction field of $H_0^l$ by $Q_0^l$.
Brown and Zhang \cite[Section 5]{BZ} showed that there is a delicate $\widehat{G_\pi^l}$-action on $Q_0^l$ defined as follows: For each $\chi^i\in \widehat{G_\pi^l}$ and $a\in Q_0^l$,  $$\kappa_{i}(a):=u_{i}au_{i}^{-1}.$$ Here, to avoid confusion, we denote  the automorphism of $Q_0^l$ corresponding
to $\chi^i\in \widehat{G_\pi^l}$ by $\kappa_i$. And since $H_i^{l}$
is an invertible $H_0^{l}$-module, $Q_0^lH_i^{l}=Q_0^lu_{i}$ for any non-zero element $u_i\in H_i^l$. Brown and Zhang proved that this action is independent of the choice of the non-zero element $u_{i}$ and clearly satisfies
\begin{equation} xa=\kappa_i(a)x
\end{equation} for $x\in H_i^l$ and $a\in H_0^l$.  Furthermore, \cite[Proposition 5.1 (b)]{BZ} implies that the restriction of
$\kappa_{i}$ to $H_0^{l}$ is still an automorphism and we denote this restriction by $\kappa_i^l$ for convenience. In a special situation, the following lemma is implicit in \cite[Subsection 6.2]{BZ}.

\begin{lemma}\label{l4.3} $K^l:=\{\kappa_i^l\}_{0\leqslant i\leqslant n-1}$ is a cyclic group.
\end{lemma}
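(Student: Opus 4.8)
The plan is to show that the assignment $i\mapsto \kappa_i^l$ defines a group homomorphism from the cyclic group $\widehat{G_\pi^l}\cong \Z/n\Z$ into $\Aut(H_0^l)$; since the homomorphic image of a cyclic group is again cyclic, this forces $K^l=\{\kappa_i^l\}_{0\leqslant i\leqslant n-1}$ to be cyclic, in fact generated by $\kappa_1^l$. The whole argument thus reduces to verifying one multiplicativity (cocycle) relation.

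First I would record the two facts already supplied above: the automorphism $\kappa_i$ of $Q_0^l$ is independent of the chosen nonzero $u_i\in H_i^l$, and by \cite[Proposition 5.1 (b)]{BZ} its restriction $\kappa_i^l$ is an automorphism of $H_0^l$, so that $\kappa_j^l(a)\in H_0^l$ for every $a\in H_0^l$. In addition, since $H_0^l$ is commutative (Lemma \ref{l2.5}(c)), any nonzero $u_0\in H_0^l$ gives $\kappa_0^l(a)=u_0au_0^{-1}=a$, whence $\kappa_0^l=\id$. These provide the identity element and the target group $\Aut(H_0^l)$ of the intended homomorphism.

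The heart of the matter is the relation $\kappa_{i+j}^l=\kappa_i^l\circ\kappa_j^l$, with indices read mod $n$. To establish it, choose nonzero $u_i\in H_i^l$ and $u_j\in H_j^l$. Because $H$ is strongly $\widehat{G_\pi^l}$-graded with each $H_i^l$ an invertible $H_0^l$-bimodule and $H$ is prime, the product $u_iu_j$ is a nonzero homogeneous element of $H_{i+j}^l$, so it is a legitimate representative for computing $\kappa_{i+j}$. Working in the localization $Q_0^l\otimes_{H_0^l}H=\bigoplus_k Q_0^l u_k$, where every $u_k$ is invertible, the defining relation $xa=\kappa_i(a)x$ yields
$$(u_iu_j)\,a\,(u_iu_j)^{-1}=u_i\bigl(u_j a u_j^{-1}\bigr)u_i^{-1}=u_i\,\kappa_j^l(a)\,u_i^{-1}=\kappa_i^l\bigl(\kappa_j^l(a)\bigr),$$
the last step being licit precisely because $\kappa_j^l(a)\in H_0^l$. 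By independence of the representative, the left-hand side is $\kappa_{i+j}^l(a)$, giving the cocycle identity. Consequently $i\mapsto\kappa_i^l$ is a homomorphism on $\Z/n\Z$, each $\kappa_i^l=(\kappa_1^l)^i$, and $K^l=\langle\kappa_1^l\rangle$ is cyclic.

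The main obstacle is justifying the conjugation computation rather than the group theory that follows it: one must know that $u_iu_j$ is a nonzero homogeneous element of degree $i+j$ and that $\kappa_j^l(a)$ again lands in $H_0^l$ so that $u_i$ may be commuted past it via $\kappa_i^l$. Both points rest on the invertibility of the bimodules $H_i^l$ together with the primeness of $H$, which guarantee that products of nonzero homogeneous elements are nonzero in the crossed-product localization and that conjugation by $u_i$ preserves $H_0^l$. Once these structural facts are invoked, the remaining manipulations are purely formal and the conclusion is immediate.
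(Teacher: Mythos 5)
Your proposal is correct and follows essentially the same route as the paper: the paper likewise proves multiplicativity by the commutation relation $x_1x_1'a=\kappa_1^l(\kappa_1^l(a))x_1x_1'$ for nonzero $x_1,x_1'\in H_1^l$, using the strongly graded structure to conclude $\kappa_i^l=(\kappa_1^l)^i$ by induction, which is just the special case $j=1$ of your cocycle identity $\kappa_{i+j}^l=\kappa_i^l\circ\kappa_j^l$. Your version carries out the conjugation in the localization $Q_0^l\otimes_{H_0^l}H$ where the $u_k$ are invertible, and makes explicit the well-definedness and identity checks that the paper leaves implicit, but the underlying argument is identical.
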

\begin{proof} We only need to show $\kappa_i^l=(\kappa_1^l)^i$ for all
$i$. Let $x_1, x'_1$ be any non-zero elements of $H_1^l$, for any
$a\in H_0^l$,
$$x_1x'_1a=x_1\kappa_1^l(a)x'_1=\kappa_1^l(\kappa_1^l(a))x_1x'_1.$$
Applying the strongly graded property of $H$, it follows that
$\kappa_2^l=(\kappa_1^l)^2$. Similarly, $\kappa_i^l=(\kappa_1^l)^i$
for all $2\leqslant i\leqslant n-1$. So $K^l$ is a cyclic group.
\end{proof}

Meanwhile, $G_\pi^r$ acts on $H_0^l$ as explained in Subsection 2.2. We denote by
$\rho^l$ the resulting map from $G_\pi^r$ to Aut($H_0^l$). Let
$P_\pi^l:=\langle\rho^l(G_\pi^r), K^l\rangle\subseteq
\text{Aut}(H_0^l)$. By \cite[Proposition 5.2]{BZ}, $P_\pi^l$ is
abelian and $P_\pi^l=K^l$. So the cyclic group $\rho^l(G_\pi^r)$ is
a subgroup of $K^l$. By \cite[Proposition 5.1]{BZ}, $Z(H)\subseteq H_0^l$. Similarly, $Z(H)\subseteq H_0^r$.
Therefore, $Z(H)\subseteq H_0$ and thus $H_0$ is a $Z(H)$-module naturally. One of our key observations is the
following lemma.

\begin{lemma}\label{l4.5} $H_0$ is a torsion-free $Z(H)$-module of rank at least $t$.
\end{lemma}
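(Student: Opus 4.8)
The plan is to treat the two assertions separately. Torsion-freeness is immediate: by Lemma \ref{l2.5}(c) the ring $H_0$ is a commutative domain and $Z(H)\subseteq H_0$, so no nonzero element of $Z(H)$ annihilates a nonzero element of $H_0$; hence $H_0$ is a torsion-free module over its subring $Z(H)$. Moreover $Z(H)$, being the centre of a prime ring, is a commutative (hence Ore) domain, so the rank is well defined in the sense of Subsection 2.1. The real content is therefore the lower bound on $\operatorname{rank}_{Z(H)}H_0$, which I would obtain by running through the tower $Z(H)\subseteq H_0\subseteq H_0^l$ and comparing ranks after localizing at the centre.

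First I would compute $\operatorname{rank}_{Z(H)}H_0^l=n$. Since $H$ is prime PI with $\operatorname{PI-deg}(H)=\io(H)=n$, central localization (Posner's theorem) gives $Q(Z(H))\otimes_{Z(H)}H\cong Q(H)$ with $\dim_{Q(Z(H))}Q(H)=n^2$, that is, $\operatorname{rank}_{Z(H)}H=n^2$. Regarding $H$ merely as a module over the commutative Dedekind domain $H_0^l$, the strong $\widehat{G_\pi^l}$-grading $H=\bigoplus_{i=0}^{n-1}H_i^l$ exhibits each $H_i^l$ as an invertible, hence rank-one, $H_0^l$-module, so $\operatorname{rank}_{H_0^l}H=n$. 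Transitivity of rank along $Z(H)\subseteq H_0^l$ applied to the $H_0^l$-module $H$ is legitimate here, because $H_0^l$ is a domain of finite rank over $Z(H)$ and thus $Q(Z(H))\otimes_{Z(H)}H_0^l=Q(H_0^l)$; it yields $n^2=\operatorname{rank}_{Z(H)}H_0^l\cdot n$, whence $\operatorname{rank}_{Z(H)}H_0^l=n$.

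Next I would bound $\operatorname{rank}_{H_0}H_0^l=[Q(H_0^l):Q(H_0)]$ from above by $m$. The key observation is that $H_0=H_0^l\cap H_0^r$ is exactly the fixed ring $(H_0^l)^{\rho^l(G_\pi^r)}$: since $\Xi_\pi^l$ and $\Xi_\pi^r$ commute, $\Xi_\pi^r$ preserves $H_0^l$, and an element $a\in H_0^l$ lies in $H_0^r$ precisely when $\Xi_\pi^r(a)=a$. Now by Remark \ref{r3.4} we have $(\Xi_\pi^r)^m=(\Xi_\pi^l)^m$, and $(\Xi_\pi^l)^m$ fixes $H_0^l$ pointwise, being a power of $\Xi_\pi^l$, which acts as the identity on $H_0^l$. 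Hence $\rho^l(\Xi_\pi^r)^m=\Id$ on $H_0^l$, so the cyclic group $\rho^l(G_\pi^r)\subseteq\Aut(H_0^l)$ has order dividing $m$. Extending the action to the fraction field and applying Artin's theorem on fixed fields gives $[Q(H_0^l):Q(H_0)]\leqslant|\rho^l(G_\pi^r)|\leqslant m$.

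Combining the two steps by transitivity of rank along the commutative tower $Z(H)\subseteq H_0\subseteq H_0^l$,
$$n=\operatorname{rank}_{Z(H)}H_0^l=\operatorname{rank}_{Z(H)}H_0\cdot\operatorname{rank}_{H_0}H_0^l\leqslant m\cdot\operatorname{rank}_{Z(H)}H_0,$$
which gives $\operatorname{rank}_{Z(H)}H_0\geqslant n/m=t$, as desired. The main technical obstacle I anticipate is the bookkeeping in the tower: one must verify that Posner's central localization applies and that multiplicativity of rank survives passage through the intermediate ring, which I handle by always localizing at $Z(H)$ and by treating $H$ and $H_0^l$ only as modules over the commutative rings $H_0^l$ and $H_0$ respectively, using that each $H_i^l$ is a rank-one $H_0^l$-module. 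A secondary point requiring care is the identification $H_0=(H_0^l)^{\rho^l(G_\pi^r)}$ and the faithfulness needed for Artin's theorem; note, however, that this enters only in the harmless direction of the upper bound $[Q(H_0^l):Q(H_0)]\leqslant m$, so the lower bound $\operatorname{rank}_{Z(H)}H_0\geqslant t$ is robust even without pinning down the exact index.
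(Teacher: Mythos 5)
Your proof is correct, but it takes a genuinely different route from the paper. The paper's proof of Lemma \ref{l4.5} is structural: invoking \cite[Propositions 5.1 and 5.2]{BZ}, it uses the fact that the cyclic group $K^l$ of conjugation automorphisms $\kappa_i^l$ acts faithfully on $H_0^l$, so the $\widehat{K^l}$-eigenspace decomposition of $H_0^l$ has all components nonzero and refines the decomposition $H_0^l=\bigoplus_{0\leqslant i\leqslant m-1}H_{0,it}$ induced by $\rho^l(G_\pi^r)$; consequently $H_0=(H_0^l)_0^\kappa\oplus(H_0^l)_m^\kappa\oplus\cdots\oplus(H_0^l)_{(t-1)m}^\kappa$ is exhibited as a direct sum of $t$ nonzero torsion-free modules over $(H_0^l)_0^\kappa$, and $(H_0^l)_0^\kappa=Z(H)$ by \cite[Proposition 5.1]{BZ}. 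You instead run a global rank count through the tower $Z(H)\subseteq H_0\subseteq H_0^l$: Posner together with $\text{PI-deg}(H)=\io(H)=n$ pins $\operatorname{rank}_{Z(H)}H=n^2$, the strong grading gives $\operatorname{rank}_{H_0^l}H=n$ and hence $[Q(H_0^l):Q(Z(H))]=n$, while $H_0=(H_0^l)^{\rho^l(G_\pi^r)}$ and Remark \ref{r3.4} bound $[Q(H_0^l):Q(H_0)]\leqslant m$. This is legitimate and is essentially the counting of Proposition \ref{p4.6} run in reverse; indeed, combined with Lemma \ref{l3.2} (all $H_{0,jt}\neq 0$, so the index is exactly $m$) your argument yields the exact rank $t$ at once, whereas the paper extracts exactness only in Proposition \ref{p4.6}. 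What you lose is the finer structural output of the paper's proof — the $\kappa$-eigenspace refinement of the bigrading and the identification $(H_0^l)_0^\kappa=Z(H)$ — and what you gain is independence from the faithfulness machinery for $K^l$, using only facts established before the lemma, so there is no circularity. Two steps deserve to be written out rather than asserted: (i) in the Artin step you need $Q\big((H_0^l)^{G}\big)=Q(H_0^l)^{G}$ for $G=\rho^l(G_\pi^r)$, since Artin only controls the degree over the fixed field of the fraction field and the containment $Q(H_0)\subseteq Q(H_0^l)^{G}$ points the wrong way for your inequality; this follows from the standard norm trick of rewriting an invariant fraction $a/b$ with invariant denominator $\prod_{g}g(b)$; (ii) the finiteness of $\operatorname{rank}_{Z(H)}H_0^l$, needed to see that the localization $Q(Z(H))\otimes_{Z(H)}H_0^l$ is a field equal to $Q(H_0^l)$, follows from exactness of central localization inside $\operatorname{rank}_{Z(H)}H=n^2$. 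Neither point is a gap, merely a standard verification to record.
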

\begin{proof}
Since PI-deg$(H)=\io(H)=n$, $K:=\{\kappa_i\}_{0\leqslant i\leqslant
n-1}$ acts faithfully on $Q_0^l$ by \cite[Proposition 5.1]{BZ}. So
it is easy to see that $K^l$ acts faithfully on $H_0^l$. Then we
have a $\widehat{K^l}$-grading on $H_0^l$:
$$H_0^l=\bigoplus_{\chi^i\in\widehat{K^l}}(H_0^l)_i^{\kappa},$$
where $(H_0^l)_i^\kappa=\{x\in H_0^l\ |\ g(x)=\chi^i(g)x, \forall
g\in K^l\}.$

Note that the grading of $H_0^l$ induced by the action
of $\rho^l(G_\pi^r)$ is just $H_0^l=\bigoplus_{0\leqslant i\leqslant
m-1}H_{0,it}$. Since $\rho^l(G_\pi^r)$ is a subgroup of the cyclic group
$K^l$, $H_0^l=\bigoplus_{\chi^i\in
\widehat{K^l}}(H_0^l)_i^\kappa$ is a refinement of
$H_0^l=\bigoplus_{0\leqslant i\leqslant m-1}H_{0,it}$. Therefore
$$H_0=(H_0^l)_0^\kappa\bigoplus (H_0^l)_m^\kappa \bigoplus \cdots
\bigoplus (H_0^l)_{(t-1)m}^\kappa.$$ Because of the faithful
action of $K^l$ on $H_0^l$, each $(H_0^l)_i^\kappa$ is torsion-free
on $(H_0^l)_0^\kappa$ of rank at least one. By \cite[Proposition 5.1]{BZ},
$(H_0^l)_0^\kappa=Z(H)$.  Thus $H_0$ is
torsion-free over $Z(H)$ of rank at least $t$.
\end{proof}

With this observation, we have the following conclusion.

\begin{proposition}\label{p4.6} Each homogeneous component $H_{i, i+jt}$ of $H=\underset{0\leqslant i\leqslant n-1, 0\leqslant j\leqslant m-1}{\bigoplus} H_{i, i+jt}$ is a free $H_0$-module of
rank one on both sides.
\end{proposition}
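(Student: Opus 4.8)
The plan is to determine the $H_0$-rank of each homogeneous component by a single global counting argument, and then promote ``rank one'' to ``free of rank one'' using the explicit shape of $H_0$.

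First I would record the two numerical inputs. Since $H$ is prime PI with $\mathrm{PI\text{-}deg}(H)=\io(H)=n$, the formula recalled in Section~2 gives $\mathrm{rank}_Z(H)=(\mathrm{PI\text{-}deg}(H))^2=n^2$, where $Z:=Z(H)$ and $\mathrm{rank}_Z$ denotes rank over $Z$ in the sense of Section~2. By Lemma~\ref{l4.5}, $\mathrm{rank}_Z(H_0)\geqslant t$. For any $H_0$-module $M$ the tower identity $\mathrm{rank}_Z(M)=\mathrm{rank}_Z(H_0)\cdot \mathrm{rank}_{H_0}(M)$ holds: $Z\subseteq H_0$ are commutative domains, localizing $H_0$ at $Z\setminus\{0\}$ yields the field $Q_0=\mathrm{Frac}(H_0)$ with $[Q_0:\mathrm{Frac}(Z)]=\mathrm{rank}_Z(H_0)<\infty$ (finiteness from $H_0\subseteq H$ and $\mathrm{rank}_Z(H)=n^2$), and ranks multiply along $Z\subseteq H_0$.

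Next I would check that each component is a nonzero, finitely generated, torsion-free $H_0$-module. By Proposition~\ref{p3.5} every $H_{i,i+jt}$ is nonzero, and it lies inside $H_i^l$. The strong $\widehat{G_\pi^l}$-grading makes $H_i^l$ an invertible, hence finitely generated torsion-free, $H_0^l$-module; and by Remark~\ref{r4.3} one has $H_0^l=\bigoplus_{0\leqslant j\leqslant m-1}H_{0,jt}$ with each $H_{0,jt}$ free of rank one over $H_0$, so $H_0^l$ is free of rank $m$ over $H_0$. Consequently $H_i^l$ is finitely generated and torsion-free over $H_0$, and since $H_0$ is noetherian so is its submodule $H_{i,i+jt}$; in particular $\mathrm{rank}_{H_0}(H_{i,i+jt})\geqslant 1$.

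Now comes the counting step, which I expect to be the crux. Summing the tower identity over the $nm$ components of $H=\bigoplus_{0\leqslant i\leqslant n-1,\,0\leqslant j\leqslant m-1}H_{i,i+jt}$ gives
\[
n^2=\mathrm{rank}_Z(H)=\mathrm{rank}_Z(H_0)\sum_{i,j}\mathrm{rank}_{H_0}(H_{i,i+jt})\geqslant t\cdot(nm)\cdot 1=n^2,
\]
using $t=n/m$. The delicate point is precisely that the total rank $n^2$, the lower bound $t$ from Lemma~\ref{l4.5}, and the count $nm$ of nonzero pieces conspire to make this inequality an equality; equality then forces $\mathrm{rank}_Z(H_0)=t$ and $\mathrm{rank}_{H_0}(H_{i,i+jt})=1$ for all $i,j$. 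Finally I would convert rank one into freeness: by Remark~\ref{r4.3}, $H_0=H_{00}$ is $k[x^m]$ in the primitive case and $k[x^{\pm 1}]$ in the group-like case, both principal ideal domains, over which a finitely generated torsion-free module of rank one is free of rank one. Hence each $H_{i,i+jt}$ is a free $H_0$-module of rank one, as claimed.
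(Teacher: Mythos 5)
Your proposal is correct and follows essentially the same route as the paper: the paper likewise combines the nonvanishing of each $H_{i,i+jt}$ (Proposition \ref{p3.5}), torsion-freeness giving rank at least one over $H_0$, the bound $\mathrm{rank}_{Z(H)}H_0\geqslant t$ from Lemma \ref{l4.5}, and $\mathrm{rank}_{Z(H)}H=n^2$ from $\mathrm{PI\text{-}deg}(H)=n$ to force equality in $\mathrm{rank}_{Z(H)}H\geqslant nmt=n^2$, with the PID structure of $H_0$ ($k[x^m]$ or $k[x^{\pm 1}]$) converting rank one into freeness. The only difference is cosmetic ordering (the paper invokes the PID property before the counting, you after), and your explicit verification of the rank tower identity is a welcome elaboration of a step the paper leaves implicit.
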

\begin{proof} We work with left modules. By
\cite[Proposition 5.1]{BZ},  each $H_i^l$ is a torsion-free
$H_0^l$-module.  So the rank of $H_i^l$ on $H_0^l$ is at least one.
By Proposition \ref{p3.5}, each homogeneous $H_{i, i+jt}$ is a
non-zero $H_0$-module, then it is torsion-free of rank at least one.
Since $\widetilde{H}$ is isomorphic as a Hopf algebra either to the
Taft algebra $H(m, 1, \xi)$ or to the generalized Liu algebra $B(m,
\omega, \gamma)$,  $H_0$ is isomorphic either to $k[x^m]$ or to
$k[x^{\pm 1}]$. Both of them are principal ideal domains, so each
$H_{i, i+jt}$ is free on $H_0$ of rank at least one which implies
the rank of $H$ over $H_0$ is not smaller than $nm$. By Lemma
\ref{l4.5}, the rank of $H_0$ over $Z(H)$ is at least $t$.
Therefore, $$\text{rank}_{Z(H)}H \geqslant nmt=n^2.$$ Recall that
the PI-degree of $H$ is $n$ and equals the square root of the rank of
$H$ over $Z(H)$.  So the rank of $H_0$ over $Z(H)$ is $t$ and each
$H_{i, i+jt}$ is a free $H_0$-module of rank one.
\end{proof}

Since $H_0=k[x^m]$ or $H_0=k[x^{\pm 1}]$, there is a generating set
$\{u_{i, i+jt}|0\leqslant i\leqslant n-1, 0\leqslant j\leqslant m-1
\}$ of these $H_{i, i+jt}$ satisfying
$$u_{00}=1\;\;\;\;\textrm{and}\;\;\;\;H_{i, i+jt}=u_{i, i+jt}H_0=H_0u_{i, i+jt}.$$
So, $H$ can be written as
\begin{equation}\label{eq6.2} H=\underset{0\leqslant j\leqslant m-1}{\bigoplus_{0\leqslant i\leqslant n-1}}H_0u_{i,
i+jt}=\underset{0\leqslant j\leqslant m-1}{\bigoplus_{0\leqslant
i\leqslant n-1}}u_{i, i+jt}H_0.\end{equation}

With these preparations,  we can analyse the structure of $H$ further by dichotomy now: either $\widetilde{H}$
is $H(m, 1, \xi)$, the primitive case; or $\widetilde{H}$ is
$B(m, \omega, \gamma)$, the group-like case.

\section{Primitive case}

If $H$ is primitive,  $\widetilde{H}=H(m, 1, \xi)$. We will prove $H$ is isomorphic
as a Hopf algebra to $H(n, t, \theta)$, for $\theta$ some primitive
$n$th root of 1. Recall that
$$\widetilde{H}=H(m, 1, \xi)=k\langle g, x|g^m=1,xg=\xi gx\rangle.$$ Note that
$H=\bigoplus_{0\leqslant i\leqslant n-1, 0\leqslant j\leqslant m-1}
H_{i, i+jt}$, $\widetilde{H}=\bigoplus_{0\leqslant i, j\leqslant
m-1} H_{it, jt}$ and $H_{it, jt}=k[x^m]x^{j-i}g^{i}$ (the index $j-i$
is interpreted mod $m$). In particular, $H_0=k[x^m]$, $H_{0,
jt}=k[x^m]x^j$ and $H_{tt}=k[x^m]g$.

By Lemma \ref{l3.3}, $\epsilon(u_{11})\neq 0$. Multiplied with a suitable scalar, we can assume that
$\epsilon(u_{11})=1$ throughout  this section.

\begin{lemma}\label{l4.7} Let $u:=u_{11}$. Then $H_1^l=H_0^lu$, $H=\bigoplus_{0\leqslant i\leqslant
t-1}\widetilde{H}u^i$ and $u$ is invertible.
\end{lemma}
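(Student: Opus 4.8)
The plan is to prove the module identity $H_1^l=H_0^l u$ first, and then read off the direct-sum decomposition from the strongly graded structure already built in Section 5. Recall the data available here: $H_0^l=\bigoplus_{0\le j\le m-1}H_{0,jt}=k[x]$ is a (commutative) PID by Lemma \ref{l3.2} and Remark \ref{r4.3}, while $H_1^l=\bigoplus_{0\le j\le m-1}H_{1,1+jt}$, where by Proposition \ref{p4.6} each $H_{1,1+jt}=k[x^m]u_{1,1+jt}$ is free of rank one over $H_0=k[x^m]$; in particular $H_{11}=k[x^m]u$. Since $H_1^l$ is an invertible $H_0^l$-bimodule and is torsion-free over $H_0^l$ by \cite[Proposition 5.1]{BZ}, it is free of rank one as a left $k[x]$-module, so $\dim_k\bigl(H_1^l/xH_1^l\bigr)=1$.

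Next I would exploit the right grading. The component $H_{1,1+jt}$ is the degree-$j$ piece of $H_1^l$ (indices read mod $m$), and left multiplication by the homogeneous element $x\in H_{0,t}$ carries $H_{1,1+(j-1)t}$ into $H_{1,1+jt}$; hence the degree-$j$ part of $xH_1^l$ is exactly $xH_{1,1+(j-1)t}$, giving the graded decomposition
$$H_1^l/xH_1^l=\bigoplus_{j=0}^{m-1}H_{1,1+jt}\big/\,xH_{1,1+(j-1)t}.$$
Since the total dimension is $1$, exactly one summand, say the $j_0$-th, is nonzero and one-dimensional, while $H_{1,1+jt}=xH_{1,1+(j-1)t}$ for every $j\ne j_0$.

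The decisive step is to show $j_0=0$, and this is where the normalization $\epsilon(u)=1$ enters. If $j_0\ne 0$, then the $j=0$ summand vanishes, i.e.\ $H_{11}=xH_{1,1+(m-1)t}$, so $u=xw$ for some $w\in H_{1,1+(m-1)t}$. Applying $\epsilon$ and using $\epsilon(x)=0$ (as $x$ is skew-primitive in $\widetilde H=H(m,1,\xi)$) yields $\epsilon(u)=\epsilon(x)\epsilon(w)=0$, contradicting $\epsilon(u)=1$. Hence $j_0=0$, so $H_{1,1+jt}=xH_{1,1+(j-1)t}$ for all $1\le j\le m-1$. Iterating downward gives $H_{1,1+jt}=x^jH_{11}=k[x^m]x^ju$ for each $0\le j\le m-1$, and summing over $j$ produces $H_1^l=\bigoplus_j k[x^m]x^ju=k[x]u=H_0^l u$, which is the first assertion.

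For the second assertion I would invoke the refinement $H=\bigoplus_{0\le i\le t-1}\widetilde H(H_1^l)^i$ from Section 5. Using the relation $ua=\kappa_1^l(a)u$ for $a\in H_0^l$ (so that $uH_0^l=H_0^l u$, since $\kappa_1^l\in\Aut(H_0^l)$), a short induction gives $(H_1^l)^i=(H_0^l u)^i=H_0^l u^i$; and since $H_0^l\subseteq\widetilde H$ we get $\widetilde H(H_1^l)^i=\widetilde H u^i$, whence $H=\bigoplus_{0\le i\le t-1}\widetilde H u^i$, the sum remaining direct because the original one is. The routine part of the argument is the module theory (torsion-free rank-one over a PID, homogeneity of $xH_1^l$); the main obstacle—though compact—is the counit computation that pins the free generator of $H_1^l$ to the component $H_{11}$ rather than to some shifted piece $H_{1,1+j_0t}$.
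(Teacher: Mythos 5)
Your proof is correct, and it packages the argument differently from the paper, although both rest on the same two inputs: the freeness supplied by Proposition \ref{p4.6} and the normalization $\epsilon(u_{11})=1$ played against $\epsilon(x)=0$. The paper works component by component: from $H_{0t}H_{11}\subseteq H_{1,1+t}$ and $H_{0,(m-1)t}H_{1,1+t}\subseteq H_{11}$ it gets $x^mH_{1,1+t}\subseteq H_{0t}H_{11}\subseteq H_{1,1+t}$, uses that $x^mH_0$ is a maximal ideal of $H_0=k[x^m]$ and that $H_{1,1+t}$ is free of rank one to conclude that $H_{0t}H_{11}$ is either $x^mH_{1,1+t}$ or all of $H_{1,1+t}$, rules out the first option by the same counit-plus-torsion-freeness trick you use (if $u_{11}$ were divisible by $x$ then $\epsilon(u_{11})=0$), and then repeats this ``similarly'' for each $H_{1,1+it}$. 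You instead globalize: $H_1^l$ is free of rank one over $H_0^l=k[x]$, so $\dim_k\bigl(H_1^l/xH_1^l\bigr)=1$; the bigrading splits this quotient into the $m$ pieces $H_{1,1+jt}/xH_{1,1+(j-1)t}$, and the counit forces the unique nonzero piece to sit at $j=0$, giving all the equalities $H_{1,1+jt}=x^jH_{11}$ in one stroke, with no maximal-submodule dichotomy and no per-component repetition. The one step you pass over quickly is the rank-one freeness of $H_1^l$ over $k[x]$, which you attribute to invertibility of the bimodule; it is cleanest to extract it from what you already quoted: $H_1^l$ is finitely generated and torsion-free, hence free, over the PID $k[x]$, and by Proposition \ref{p4.6} its rank over $k[x^m]$ is $m$, so its rank over $k[x]$ is $1$. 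For the second assertion the two proofs are essentially the same: the paper deduces from the strong grading that $u$ is invertible and $H_i^l=H_0^lu^i$, while you combine $uH_0^l=H_0^lu$ (via the automorphism $\kappa_1^l$) with the decomposition $H=\bigoplus_{0\leqslant i\leqslant t-1}\widetilde{H}(H_1^l)^i$ already recorded in Section 5; both are valid, with the paper's version additionally recording the invertibility of $u$, which is used later (e.g.\ in Proposition \ref{p4.8}) but is not part of the statement of Lemma \ref{l4.7}.
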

\begin{proof} By the bigraded structure of $H$, we have
$$H_{0t}H_{11}\subseteq H_{1, 1+t},\;\;\;\; H_{0, (m-1)t}H_{1, 1+t}\subseteq
H_{11},$$ which imply $$H_{0t}H_{0, (m-1)t}H_{1, 1+t}\subseteq
H_{0t}H_{11}\subseteq H_{1, 1+t}.$$
Since $H_{0t}H_{0,
(m-1)t}=x^mH_0$ is a maximal ideal of $H_0=k[x^m]$ and $H_{1,1+t}$ is a free $H_0$-module of rank one
 (by Proposition \ref{p4.6}
),  $H_{0t}H_{0,
(m-1)t}H_{1, 1+t}$ is a maximal $H_0$-submodule of $H_{1, 1+t}$.
Thus $H_{0t}H_{11}=H_{0t}H_{0,
(m-1)t}H_{1, 1+t}=x^mH_{1, 1+t}$ or $H_{0t}H_{11}=H_{1, 1+t}$.

If
$H_{0t}H_{11}=x^mH_{1, 1+t}$, then $xu_{11}=x^m\alpha(x^m)u_{1,
1+t}$ for some polynomial $\alpha(x^m)\in k[x^m]$. So
$$x(u_{11}-x^{m-1}\alpha(x^m)u_{1, 1+t})=0.$$
Therefore, $x^m(u_{11}-x^{m-1}\alpha(x^m)u_{1, 1+t})=0$.
Note that each homogenous $H_{i, i+jt}$ is a torsion-free
$H_0$-module, so $$u_{11}=x^{m-1}\alpha(x^m)u_{1, 1+t}.$$
By  assumption, $\epsilon(u_{11})=1$.
But, by definition,   $\epsilon(x)= 0$. This is impossible. So $H_{0t}H_{11}=H_{1, 1+t}$ which implies
that $H_{0,t}u_{11}=H_{1, 1+t}$.

Similarly, we can show that $H_{0,it}u_{11}=H_{1, 1+it}$
for $0\leqslant i\leqslant m-1$.
 Thus $H_1^l=H_0^lu_{11}$. Since $H=\bigoplus_{0\leqslant i\leqslant n-1} H_i^l$ is strongly graded,
$u_{11}$ is invertible and $H_i^l=H_0^lu_{11}^i$ for all $0\leqslant
i\leqslant n-1$. Let $u:=u_{11}$, then we have
$$H=\bigoplus_{0\leqslant i\leqslant t-1}\widetilde{H}u^i.$$
\end{proof}

We are in a position to determine the structure of $H$ now.

 \begin{proposition}\label{p4.8} With above notations, we have
 $$u^t=g, \;\;\;\;xu=\theta ux,$$
where $\theta$ is a primitive $n$th root of $1$.
 \end{proposition}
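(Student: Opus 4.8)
The plan is to read off both relations from the bigrading together with two inputs already available: the invertibility of $u$ (Lemma \ref{l4.7}) and the structure of the conjugation automorphisms $\kappa_i^l$ governed by Lemmas \ref{l4.3} and \ref{l4.5}. Throughout I use the primitive-case data of Remark \ref{r4.3}: $H_{0,jt}=k[x^m]x^j$, so $H_0^l=\bigoplus_{j=0}^{m-1}k[x^m]x^j=k[x]$, and $H_{tt}=k[x^m]g$.

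First I would establish the commutation relation. Since $x\in H_{0t}\subseteq H_0^l$ and $u\in H_1^l$, the identity $wa=\kappa_i(a)\,w$ (valid for $w\in H_i^l$, $a\in H_0^l$) applied with $w=u$ gives $ux=\kappa_1^l(x)\,u$. As $\kappa_1$ is conjugation by $u\in H_{11}$, it sends $H_{0t}$ into $H_{1+0-1,\,1+t-1}=H_{0t}=k[x^m]x$, so $\kappa_1^l(x)\in k[x^m]x$. But $\kappa_1^l$ is an algebra automorphism of $H_0^l=k[x]$, hence sends the generator $x$ to a degree-one polynomial $\alpha x+\beta$; intersecting with $k[x^m]x$ (and using $m>1$) forces $\kappa_1^l(x)=\theta' x$ for a scalar $\theta'\neq 0$. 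Thus $ux=\theta' xu$, i.e. $xu=\theta ux$ with $\theta=(\theta')^{-1}$. To see $\theta$ is a primitive $n$-th root of $1$, I would invoke that $K^l=\langle\kappa_1^l\rangle$ is cyclic (Lemma \ref{l4.3}) and acts faithfully of order $n$ on $H_0^l$ (proof of Lemma \ref{l4.5}); since $(\kappa_1^l)^j(x)=(\theta')^j x$, the order of $\theta'$ equals the order $n$ of $\kappa_1^l$, so $\theta'$, and hence $\theta$, is a primitive $n$-th root of $1$.

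Next I would pin down $u^t$. Because $H$ is strongly $\widehat{G_\pi^l}$- and $\widehat{G_\pi^r}$-graded and $u\in H_{11}$, we get $u^t\in H_t^l\cap H_t^r=H_{tt}=k[x^m]g$, so $u^t=p(x^m)g$ for some $p\in k[x^m]$. Taking counits with $\epsilon(u)=1$, $\epsilon(x)=0$, $\epsilon(g)=1$ yields $p(0)=1$; the real task is to show $p$ is constant. Here I would use invertibility: $u$ is a unit by Lemma \ref{l4.7}, and since $u^{-t}\in H_{-t,-t}=H_{(m-1)t,(m-1)t}\subseteq\widetilde{H}$, the element $u^t$ is a unit of $\widetilde{H}$. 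As $g$ is a unit, $p(x^m)=u^tg^{-1}$ is a unit of $\widetilde{H}=H(m,1,\xi)$ lying in the central subring $k[x^m]$ (note $gx^mg^{-1}=\xi^{-m}x^m=x^m$). Now $\widetilde{H}$ is free of rank $m^2$ over $k[x^m]$, so a Nakayama argument shows that a unit of $\widetilde{H}$ contained in $k[x^m]$ is already a unit of $k[x^m]$: otherwise it would lie in some maximal ideal $\mathfrak{m}$ of $k[x^m]$ while acting invertibly on the nonzero module $\widetilde{H}/\mathfrak{m}\widetilde{H}$, which is impossible. Hence $p(x^m)$ is a unit of the polynomial ring $k[x^m]$, i.e. a nonzero scalar, and $p(0)=1$ forces $p(x^m)=1$ and $u^t=g$.

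The main obstacle is precisely this last point: ruling out a nonconstant coefficient $p(x^m)$. One cannot argue by a leading-term or $x$-degree count, because $\widetilde{H}=H(m,1,\xi)$ is prime but not a domain (the group-like $g$ satisfies $g^m=1$, producing zero-divisors such as $(1-g)(1+g+\cdots+g^{m-1})$), so products of nonzero homogeneous elements can vanish in top degree. The Nakayama argument over the central polynomial subring $k[x^m]$ is exactly what circumvents this. Once $u^t=g$ and $xu=\theta ux$ are established they are mutually consistent, since conjugating $x$ by $u^t=g$ gives $\theta^{-t}x=gxg^{-1}=\xi^{-1}x$, i.e. $\theta^t=\xi$ (a primitive $m$-th root, as it must be); these two relations are what will let one identify $H$ with the Taft algebra $H(n,t,\theta)$ in the sequel.
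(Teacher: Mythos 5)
Your proof is correct, but it diverges from the paper's argument at the two places where real work is needed, and in both cases the comparison is instructive. For the relation $xu=\theta ux$, the paper first writes $xu=ux\beta(x^m)$ with a possibly non-constant $\beta$, and only after establishing $u^t=g$ does it use $xg=\xi gx$ to force $\beta$ to be a constant $\theta$ with $\theta^t=\xi$; you instead pin down $\kappa_1^l(x)=\theta'x$ immediately, by intersecting the image of a degree-one polynomial under an automorphism of $H_0^l=k[x]$ with $H_{0t}=k[x^m]x$ (using $m>1$), which decouples the two assertions. More substantially, for the primitivity of $\theta$ the paper argues by primeness: if $\theta$ had order $n'=n/c<n$, then $v=u^{n'}$ is central with $v^c=g^m=1$, and the averaging idempotents $e_1,e_2$ would be nontrivial central orthogonal idempotents, a contradiction. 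You instead read the order of $\theta'$ off the faithfulness of the cyclic group $K^l=\langle\kappa_1^l\rangle$ acting on $H_0^l$ (Lemmas \ref{l4.3}, \ref{l4.5}, ultimately PI-deg$(H)=n$ via \cite[Proposition 5.1]{BZ}); since an automorphism of $k[x]$ is determined by its value on $x$, the order of $\kappa_1^l$ equals the order of $\theta'$, and primitivity is immediate. Both are legitimate uses of tools already in play; yours is shorter, while the paper's idempotent trick is more robust (it needs only primeness, not the faithfulness statement). Finally, for $u^t=g$ the paper simply asserts that invertibility of $u^t\in H_{tt}=k[x^m]g$ forces $u^t=ag$ with $a\in k^\times$; your Nakayama argument over the central subring $k[x^m]$ of $\widetilde{H}$ is a correct way to fill this in, and your observation that naive degree counting is unsafe in $H(m,1,\xi)$ is fair. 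A one-line alternative worth knowing: if $p(x^m)q=1$, project onto the $H_{00}$-component of the bigrading \eqref{eq6.2} to get $p(x^m)q_{00}=1$ inside the polynomial ring $H_0=k[x^m]$, whose units are scalars. Your closing consistency check $\theta^t=\xi$ recovers exactly the relation the paper extracts en route and needs for the identification $H\cong H(n,t,\theta)$ in Theorem \ref{t4.10}.
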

\begin{proof}
By $H_{0t}u=uH_{0t}$, there exists a polynomial $\beta(x^m)\in k[x^m]$ such that
$$xu=ux\beta(x^m).$$  Then
$$xu^t=u^tx{\beta}'(x^m)$$ for some polynomial ${\beta}'(x^m)\in
k[x^m]$ induced by $\beta(x^m)$. Since $u^t$ is invertible and
$u^t\in H_{t,t}=k[x^m]g$,  $u^t=ag$ for $0\neq a\in k$. By
assumption, $\epsilon(u)=1$ and thus $a=1$. Therefore, $u^t=g$. Since
$xg=\xi gx$,  we have ${\beta}'(x^m)=\xi$. Then it is easy to see that
$\beta(x^m)=\theta\in k$ with $\theta^t=\xi$. Of course,
$\theta^n=1$.

The last job is to show that $\theta$ is a primitive $n$th root of
$1$. Indeed, assume $\theta$ is a primitive $n'$th root of 1. Then $Z(H)=k[x^{n'}]$, the center of $H$. Recall that the PI-degree of $H$ equals the square root of the rank of $H$ over $Z(H)$.
So the equalities $$n^2=nm\;\text{rank}_{Z(H)}H_0=nm n'/m$$ hold. That is, $n'=n$ and $\theta$ is a primitive $n$th root of
$1$.
\end{proof}

\begin{proposition}\label{p4.9} The element $u$ is a group-like element of $H$.
\end{proposition}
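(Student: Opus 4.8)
The plan is to prove $\Delta(u)=u\otimes u$; since $\epsilon(u)=\epsilon(u_{11})=1$ by our standing normalisation, this is exactly the assertion that $u$ is group-like. The first step is to localise $\Delta(u)$ inside the bigrading. As $u=u_{11}\in H_{11}=H_1^l\cap H_1^r$, Lemma \ref{l2.5}(a) gives $\Delta(H_1^l)\subseteq H_1^l\otimes H$ and $\Delta(H_1^r)\subseteq H\otimes H_1^r$, so that $\Delta(u)\in H_1^l\otimes H_1^r$. Decomposing each factor by the remaining grading and using Proposition \ref{p3.5}, I would write
\begin{equation*}
\Delta(u)=\sum_{0\leqslant j,\,i'\leqslant m-1} w_{j,i'},\qquad w_{j,i'}\in H_{1,\,1+jt}\otimes H_{1+i't,\,1}.
\end{equation*}

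Next I would exploit the counit. Applying $\epsilon\otimes\Id$ and $\Id\otimes\epsilon$ and invoking Lemma \ref{l3.3}(b) (the counit annihilates every off-diagonal component $H_{rs}$ with $r\neq s$), only the summands with $j=0$ survive the left counit and only those with $i'=0$ survive the right counit; comparing bidegrees with $u\in H_{11}$ then forces $w_{0,i'}=0$ for $i'\neq 0$, $w_{j,0}=0$ for $j\neq 0$, and $(\epsilon\otimes\Id)(w_{00})=u=(\Id\otimes\epsilon)(w_{00})$. Because $H_{11}=k[x^m]u$ and $x^m$ is primitive with $\epsilon(x^m)=0$, expanding $w_{00}=\sum_{p,q\geqslant 0}c_{pq}\,x^{mp}u\otimes x^{mq}u$ the two counit identities yield $c_{00}=1$ and $c_{0q}=c_{p0}=0$ for $p,q\geqslant 1$, so $w_{00}=u\otimes u+\sum_{p,q\geqslant 1}c_{pq}\,x^{mp}u\otimes x^{mq}u$. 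What remains is to annihilate the higher corrections inside $w_{00}$ together with all genuine cross terms $w_{j,i'}$ with $j,i'\geqslant 1$.

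The crux is to remove these remaining terms by coassociativity. Since $u$ is invertible (Lemma \ref{l4.7}), it is convenient to set $v:=\Delta(u)\,(u^{-1}\otimes u^{-1})$; the grading computation above shows $v\in H_0^l\otimes H_0^r$ and is invertible, while the counit identities give $(\epsilon\otimes\Id)(v)=(\Id\otimes\epsilon)(v)=1$. Feeding $\Delta(u)=v\,(u\otimes u)$ into $(\Delta\otimes\Id)\Delta(u)=(\Id\otimes\Delta)\Delta(u)$ and cancelling the invertible factor $u\otimes u\otimes u$ reduces the problem to the normalised $2$-cocycle identity
\begin{equation*}
(\Delta\otimes\Id)(v)\cdot(v\otimes 1)=(\Id\otimes\Delta)(v)\cdot(1\otimes v).
\end{equation*}
I would then solve this equation using the explicit coalgebra structure inherited from $\widetilde{H}=H(m,1,\xi)$ — in particular that $x^m$ is primitive and that each homogeneous component is a free $H_0=k[x^m]$-module of rank one (Proposition \ref{p4.6}) — to conclude that its only normalised invertible solution is $v=1\otimes 1$, i.e. $\Delta(u)=u\otimes u$.

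I expect this last step to be the main obstacle. The two winding gradings each constrain only one tensor slot, so the off-diagonal terms $w_{j,i'}$ and the higher $c_{pq}$ are invisible to the bigrading and to the counit, and must be eliminated through the three-fold comparison encoded in the cocycle identity; extracting the minimal-bidegree component of $v-1\otimes 1$ and propagating the vanishing through the skew-primitive relation $\Delta(x)=x\otimes g+1\otimes x$ of $\widetilde{H}$ is where the real work lies. Once $\Delta(u)=u\otimes u$ is established, combining it with $u^t=g$ and $xu=\theta ux$ from Proposition \ref{p4.8} identifies $H$ with $H(n,t,\theta)$, completing the primitive case.
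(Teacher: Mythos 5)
Your setup — placing $\Delta(u)$ in $H_1^l\otimes H_1^r$, normalising with the counit, and then invoking coassociativity — follows the same skeleton as the paper, but two things go wrong. First, a local overclaim: applying $\epsilon\otimes\Id$ does \emph{not} force $w_{0,i'}=0$ for $i'\neq 0$. Since $H_{11}=k[x^m]u$ and $\epsilon(x^{mp}u)=\delta_{p,0}$, the identity $(\epsilon\otimes\Id)\Delta(u)=u$ only kills the parts of $w_{0,i'}$ whose first leg is exactly $u$; components of the form $x^{mp}u\otimes(\cdots)$ with $p\geqslant 1$ are invisible to the counit and survive. The correct outcome of the counit step (and what the paper actually derives) is the weaker statement $\Delta(u)=u\otimes u+\sum c^{pq}_{ij}\,x^{a_{ip}m+i}u\otimes x^{b_{jq}m-j}g^ju$ with all exponents $a_{ip}m+i$ and $b_{jq}m-j$ strictly positive; the cross terms $w_{j,i'}$ with $j$ or $i'$ nonzero are not eliminated at this stage.

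The second and decisive problem is that the main step of your plan is never executed: you reduce the proposition to showing that the normalised invertible $2$-cocycle $v=\Delta(u)(u^{-1}\otimes u^{-1})\in H_0^l\otimes H_0^r$ must equal $1\otimes 1$, and then explicitly defer that verification ("where the real work lies"). But this triviality is precisely the content of the proposition, and it is not a formal consequence of the cocycle identity: in the group-like case, where $H_0=k[x^{\pm 1}]$, the element $u_0$ of $D(m,d,\xi)$ satisfies $\Delta(u_0)=\sum_j\gamma^{-j^2}u_j\otimes x^{-jd}g^ju_{-j}$, a genuinely nontrivial normalised solution of the same identity. So any completion must exploit that here $H_0=k[x^m]$ is a polynomial (not Laurent) algebra. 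That is exactly what the paper's short closing argument does: taking $a_{i_0p_0}m+i_0$ to be the highest $x$-degree occurring in the first tensor slot of $\Delta(u)$, the relation $\Delta(x)=x\otimes g+1\otimes x$ makes the highest first-slot degree of $(\Delta\otimes\Id)\Delta(u)$ equal to $2(a_{i_0p_0}m+i_0)$, while in $(\Id\otimes\Delta)\Delta(u)$ it remains $a_{i_0p_0}m+i_0$; coassociativity then forces $a_{i_0p_0}m+i_0=0$, i.e.\ $\Delta(u)=u\otimes u$, using crucially that degrees in $k[x^m]$ are nonnegative. Without this (or an equivalent minimal/maximal-degree extraction carried through in detail), your proposal is an unfinished reduction rather than a proof.
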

\begin{proof} First of all $H_0^r\cong k[x]\cong H_0^l$. Then $H_0^r\otimes H_0^l\cong k[x, y]$ and the only invertible elements in $H_0^r\otimes H_0^l$ are nonzero scalars in $k$. Since $\Delta(u)$ and $u\otimes u$ are invertible, $\Delta(u)(u\otimes u)^{-1}$ is invertible (and hence a scalar). Thus $u$ must be grouplike by noting that $\epsilon(u)=1$.
\end{proof}

The next theorem follows from Lemma \ref{l4.7} and Propositions
\ref{p4.8}, \ref{p4.9} directly.

\begin{theorem}\label{t4.10}  Let $H$ be an affine prime regular Hopf algebra of GK-dimension one satisfying $\io(H)=n>\im(H)=m>1$. If $H$ is primitive, then $H$ is isomorphic
as a Hopf algebra to an infinite dimensional Taft algebra.
\end{theorem}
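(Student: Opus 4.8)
The plan is to exhibit an explicit Hopf algebra isomorphism $H(n,t,\theta)\cong H$, where $\theta$ is the primitive $n$-th root of $1$ produced in Proposition \ref{p4.8}. Recall from the primitive hypothesis that $\widetilde{H}=H(m,1,\xi)=k\langle g,x\mid g^m=1,\;xg=\xi gx\rangle$, with $g$ group-like and $x$ skew-primitive, so that $\Delta(g)=g\otimes g$ and $\Delta(x)=x\otimes g+1\otimes x$.

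First I would assemble the structural data on the two generators $x$ and $u:=u_{11}$ supplied by the previous three results. Proposition \ref{p4.8} gives $u^t=g$ and $xu=\theta ux$; combined with $g^m=1$ this yields $u^n=1$. Proposition \ref{p4.9} gives $\Delta(u)=u\otimes u$, and with the normalization $\epsilon(u_{11})=1$ fixed at the start of the section we have $\epsilon(u)=1$. Rewriting the coproduct of $x$ via $g=u^t$ gives $\Delta(x)=x\otimes u^t+1\otimes x$, together with $\epsilon(x)=0$; note also that $\theta^t=\xi$ (again from Proposition \ref{p4.8}) makes the relation $xg=\xi gx$ a consequence of $xu=\theta ux$, so no further relations are forced. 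These are precisely the defining algebra relations and the coalgebra structure of the infinite-dimensional Taft algebra $H(n,t,\theta)$, once its group-like generator is matched with $u$ and its skew-primitive generator with $x$.

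Next I would define $\psi\colon H(n,t,\theta)\to H$ on generators by sending the group-like generator $G$ to $u$ and the skew-primitive generator $X$ to $x$. Since $\psi$ respects the defining relations $u^n=1$ and $xu=\theta ux$ it is a well-defined algebra homomorphism, and since it also respects the coproduct and counit on generators it is a bialgebra map, hence automatically a Hopf algebra map (so the antipode relation $S(x)=-xu^{-t}$ holds for free). It is surjective because $\widetilde{H}$ is generated by $x$ and $g=u^t$, and by Lemma \ref{l4.7} we have $H=\bigoplus_{0\leqslant i\leqslant t-1}\widetilde{H}u^i$, so $x$ and $u$ generate $H$.

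The only delicate point is injectivity, which I would settle by comparing bases. The target $H(n,t,\theta)$ has the monomial basis $\{X^aG^c\mid a\geqslant0,\;0\leqslant c\leqslant n-1\}$. On the $H$ side, $\widetilde{H}=H(m,1,\xi)$ has basis $\{x^ag^b\mid a\geqslant0,\;0\leqslant b\leqslant m-1\}$, and the decomposition of Lemma \ref{l4.7} is a \emph{direct} sum; since $u$ is invertible and $g^bu^i=u^{tb+i}$, where $tb+i$ runs bijectively over $\{0,\dots,n-1\}$ as $(b,i)$ ranges over $\{0,\dots,m-1\}\times\{0,\dots,t-1\}$, the set $\{x^au^c\mid a\geqslant0,\;0\leqslant c\leqslant n-1\}$ is a $k$-basis of $H$. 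Thus $\psi$ carries a basis to a basis and is an isomorphism. I expect the main obstacle to be making this basis count honest, namely verifying that the sum in Lemma \ref{l4.7} is genuinely direct and that no collapsing occurs among the $u^{tb+i}$; this is exactly what the freeness of each $H_{i,i+jt}$ from Proposition \ref{p4.6}, together with $u^n=1$ for $\theta$ of exact order $n$, guarantees.
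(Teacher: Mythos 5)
Your proposal is correct and is essentially the paper's own argument: the paper proves Lemma \ref{l4.7} and Propositions \ref{p4.8}, \ref{p4.9} and then simply asserts that Theorem \ref{t4.10} ``follows directly,'' and your write-up is exactly that direct assembly, making explicit the map $G\mapsto u$, $X\mapsto x$, the relations $u^n=u^{tm}=g^m=1$ and $xu=\theta ux$ with $\theta^t=\xi$, and the bialgebra (hence Hopf) property. Your verification of injectivity via the basis $\{x^au^{tb+i}\}$, resting on the directness of $H=\bigoplus_{0\leqslant i\leqslant t-1}\widetilde{H}u^i$ from Lemma \ref{l4.7} and the invertibility of $u$, correctly fills in the routine detail the paper leaves implicit.
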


\section{Group-like case}
If $H$ is group-like, then $\widetilde{H}=B(m, \omega,
\gamma)$ for $\gamma$ a primitive $m$th root of $1$ and $\omega$ a
positive integer. As usual, the generators of $B(m, \omega, \gamma)$ are
denoted by $x^{\pm 1},y$ and $g$. By Remark \ref{r4.3}, we can assume that
 $\widetilde{H}=\bigoplus_{0\leqslant i, j\leqslant m-1}H_{it,
jt}$ with $$H_{it, jt}=k[x^{\pm 1}]y^{j-i}g^i$$ (the index $j-i$ is
interpreted mod $m$). In particular, $H_{0}=k[x^{\pm 1}]$, $H_{0,
jt}=k[x^{\pm 1}]y^j$ and $H_{t, t}=k[x^{\pm 1}]g$.

Set $u_i:=u_{1,1+it} (0\leqslant i\leqslant m-1)$ for convenience.
By the structure of the bigrading of $H$, we have
\begin{equation}\label{4.4} yu_0=\phi_0u_1,\; yu_1=\phi_1u_2, \cdots,
yu_{m-2}=\phi_{m-2}u_{m-1},\;yu_{m-1}=\phi_{m-1}u_0\end{equation}
and
\begin{equation}\label{4.5}u_0y=\varphi_0u_1,\; u_1y=\varphi_1u_2, \cdots,
u_{m-2}y=\varphi_{m-2}u_{m-1},\;u_{m-1}y=\varphi_{m-1}u_0\end{equation}
for some polynomials $\phi_i, \varphi_i\in k[x^{\pm 1}], 0\leqslant i\leqslant
m-1$. With these notions and the equality $y^m=1-x^{\omega}$, we find that
\begin{equation}\label{4.6}(1-x^{\omega})u_0=y^mu_0=\phi_0\phi_1\cdots
\phi_{m-1}u_0\end{equation} and
\begin{equation}\label{4.7} u_0(1-x^{\omega})=u_0y^m=\varphi_0\varphi_1\cdots
\varphi_{m-1}u_0.\end{equation}

\begin{proposition}\label{p4.11} There is no group-like affine prime regular Hopf algebra of GK-dimension
one $H$ satisfying $\io(H)=n>\im(H)=m>1$ and $n/m>2$.
\end{proposition}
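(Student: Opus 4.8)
The plan is to isolate the single free generator $u:=u_{1,1}$ of the rank-one $H_0$-module $H_{1,1}$ (it satisfies $\epsilon(u)\neq 0$ by Lemma \ref{l3.3}) and to play the \emph{algebra} constraint describing how $u$ conjugates $x$ against the \emph{coalgebra} constraint carried by $\Delta(u)$. First I would record the conjugation automorphism. Since $u\in H_{1,1}$ is bi-homogeneous and invertible in the graded quotient ring, conjugation by $u$ preserves each bigraded component, so it restricts to the $k$-algebra automorphism $\kappa:=\kappa_1$ of $H_0=k[x^{\pm1}]$ supplied by the $\kappa$-action introduced before Lemma \ref{l4.3}; thus $\kappa(x)=cx^{\varepsilon}$ for some $c\in k^{\ast}$ and $\varepsilon\in\{1,-1\}$, and $ux=\kappa(x)\,u$. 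By the cyclicity in Lemma \ref{l4.3} a generator of $H^{l}_{1+j't,1}\subseteq H^{l}_{1+j't}$ conjugates $x$ by $\kappa^{1+j't}$.

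Next I would pin down the order of $\kappa$. A Laurent polynomial is central in $H$ exactly when it commutes with every $u_{1,1+it}$, i.e.\ exactly when it is $\kappa$-invariant, so $Z(H)=k[x^{\pm1}]^{\kappa}$. By Proposition \ref{p4.6} the ring $H_0=k[x^{\pm1}]$ is free of rank $t$ over $Z(H)$, whence $\kappa$ has order exactly $t$. The automorphisms of the form $x\mapsto cx^{-1}$ all have order at most $2$, so the standing hypothesis $t>2$ forces $\kappa(x)=cx$ with $c$ a primitive $t$-th root of unity; in particular $c\neq 1$ and $ux=cx\,u$. (It is precisely the admissible alternative $\kappa(x)=x^{-1}$, of order $2$, that later produces the new examples with $t=2$; this is why the bound is sharp.)

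Finally comes the contradiction. Applying $\Delta$ to $ux=cxu$ and using $\Delta(x)=x\otimes x$ yields $\Delta(u)(x\otimes x)=c\,(x\otimes x)\Delta(u)$. By Lemma \ref{l2.5}(a), $\Delta(u)\in H_1^{l}\otimes H_1^{r}=\bigoplus_{j,j'}H_{1,1+jt}\otimes H_{1+j't,1}$, and by Proposition \ref{p4.6} each summand is free of rank one on both tensor slots, hence spanned over $k$ by the monomials $x^{a}u_{1,1+jt}\otimes x^{b}v_{j'}$ with $v_{j'}$ a fixed generator of $H_{1+j't,1}$. Since left and right multiplication by $x\otimes x$ respect the bigrading, the displayed identity holds in each summand separately; pushing the two right factors of $x$ inward via $u_{1,1+jt}\,x=cx\,u_{1,1+jt}$ and $v_{j'}\,x=\kappa^{1+j't}(x)\,v_{j'}=c^{1+j't}x\,v_{j'}=c\,x\,v_{j'}$ (using $c^{t}=1$) and comparing coefficients of each monomial gives the scalar relation $c^{2}=c$ on every nonzero coefficient. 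As $c\neq 1$, all coefficients vanish, i.e.\ $\Delta(u)=0$, contradicting $(\epsilon\otimes\Id)\Delta(u)=u\neq 0$. The step I expect to require the most care is exactly this last bookkeeping: verifying that the summands $H_{1,1+jt}\otimes H_{1+j't,1}$ are spanned precisely by the indicated monomials and that the conjugation exponent $c^{1+j't}$ collapses to $c$, because it is the survival of the \emph{scalar} factor $c\neq1$ (rather than a unit $x^{\pm1}$, which is what saves the case $\kappa(x)=x^{-1}$) that annihilates $\Delta(u)$.
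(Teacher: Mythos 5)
Your argument is correct, and its decisive step is genuinely different from the paper's. The paper first proves $u_ix=\lambda_ix^{a_i}u_i$ with $a_i=\pm1$ (by exploiting that $u_i^t$ lands in $\widetilde{H}$, where $x$ is central), kills the scalar with the \emph{counit} ($\epsilon(xu_0)=\epsilon(u_0x)\neq0$ gives $\lambda_0=1$), and then runs two cases: for $u_0x=xu_0$ it computes $Z(H)=k[x^{\pm1}]$ and $\mathrm{rank}_{Z(H)}H=nm$; for $u_0x=x^{-1}u_0$ it computes $Z(H)=k[x^s+x^{-s}\mid s\geqslant0]$ and $\mathrm{rank}_{Z(H)}H=2nm$; neither equals $n^2=\text{PI-deg}(H)^2$ when $n/m>2$. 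You instead pin the order of the conjugation automorphism $\kappa=\kappa_1$ at exactly $t$, via $Z(H)=k[x^{\pm1}]^{\langle\kappa\rangle}$ and $\mathrm{rank}_{Z(H)}H_0=t$, so that $t>2$ forces $\kappa(x)=cx$ with $c$ a primitive $t$-th root of $1$; the contradiction then comes from the \emph{coproduct} rather than the counit: since every element of $H_1^l$ and (using $c^t=1$, which collapses $c^{1+j't}$ to $c$) every element of $H_1^r$ conjugates $x$ by the same scalar $c$, the identity $\D(ux)=\D(cxu)$ gives $c^2(x\otimes x)\D(u)=c(x\otimes x)\D(u)$, whence $c=1$ because $x\otimes x$ is invertible and $\D(u)\neq0$ — your coefficientwise bookkeeping in the summands $H_{1,1+jt}\otimes H_{1+j't,1}$ is fine but not even needed. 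Three caveats, none fatal: the rank-$t$ fact you invoke sits in the \emph{proof} of Proposition \ref{p4.6}, not its statement (the statement only gives $H_0$-freeness of the $H_{i,i+jt}$; Lemma \ref{l4.5} alone gives rank $\geqslant t$), and that proof is exactly where $\text{PI-deg}(H)=\sqrt{\mathrm{rank}_{Z(H)}H}$ is spent — so your route reorganizes the same engine rather than finding a cheaper one, though it is not circular, since Proposition \ref{p4.6} is established before the group-like analysis; the identification $Z(H)=k[x^{\pm1}]^{\langle\kappa\rangle}$ needs $Z(H)\subseteq H_0$ (the paper quotes \cite[Lemma 5.2]{BZ} for this inside its own proof) together with the cyclicity $\kappa_i=\kappa_1^i$ of Lemma \ref{l4.3}; and the passage from $\mathrm{rank}_{Z(H)}H_0=t$ to $|\kappa|=t$ uses Artin's theorem at the level of fraction fields ($Q(Z(H))=k(x)^{\langle\kappa\rangle}$) — these should be said explicitly, as they are the load-bearing steps. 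As for what each approach buys: yours is case-free and makes the threshold $t=2$ conceptually transparent — the $\D$-test annihilates any scalar $c\neq1$, so only $x\mapsto x^{-1}$, of order $2$, can survive, and for $t=2$ the same computation recovers the relations $u_ix=x^{-1}u_i$ of \eqref{rl}; the paper's two-case proof is more pedestrian, but it deposits exactly those commutation relations and the explicit center computations that Proposition \ref{p8.2} immediately reuses in constructing the isomorphism with $D(m,d,\xi)$.
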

\begin{proof} Since $u_iH_0=H_0u_i$, we have $$u_ix=\alpha_i(x^{\pm
1})u_i \ \ \ \ \text{and}\ \ \ \ u_ix^{-1}=\beta_i(x^{\pm 1})u_i$$
for some $\alpha_i(x^{\pm 1}), \beta_i(x^{\pm 1})\in k[x^{\pm 1}]$.
From
$$u_i=u_ixx^{-1}=\alpha_i(x^{\pm
1})u_ix^{-1}=\alpha_i(x^{\pm 1})\beta_i(x^{\pm 1})u_i,$$ we get
$\alpha_i(x^{\pm 1})\beta_i(x^{\pm 1})=1$ and thus $\alpha_i(x^{\pm
1})=\lambda_ix^{a_i}$ for some $0\neq \lambda_i\in k, 0\neq a_i\in
\mathbb{Z}$.

Note that $u_i^t\in H_{t,(1+it)t}=k[x^{\pm 1}]y^{\bar{it}}g$, where
$\bar{it}\equiv it\; (\text{mod}\; m)$. So we have
$u_i^t=\gamma_i(x^{\pm 1})y^{\bar{it}}g$ for some $\gamma_i(x^{\pm
1})\in k[x^{\pm 1}]$. Hence $u_i^t$ commutes with $x$. Applying
$u_ix=\lambda_ix^{a_i}u_i$ to $u_i^tx=xu_i^t$, we get
$\lambda_i^{\sum_{s=0}^{t-1}a_i^s}=1$ and $x^{a_i^t}=x$. If $t$ is odd, $a_i=1$ and if $t$ is even, then $a_i$ is either $1$ or $-1$.

Now we consider the special case $i=0$.  By
$\epsilon(xu_0)=\epsilon(u_0x)\neq 0$, we find that $\lambda_0=1$.

If $a_0=1$, that is $u_0x=xu_0$, then we will see $u_ix=xu_i$ for
all $0\leqslant i\leqslant m-1$. In fact, by
$$\phi_0xu_1=x\phi_0u_1=xyu_0=yxu_0=yu_0x=\phi_0u_1x,$$ we have $u_1x=xu_1$ since
$H_{1,1+t}$ is a torsion-free $H_0$-module. Similarly, $u_ix=xu_i$
for all $0\leqslant i\leqslant m-1$. Then by the strongly graded
structure $u_{i,i+jt}\in H_i^l=(H_1^l)^i$ and $x$ is commutative with $H_1^l$, it is not hard to see that $u_{i,i+jt}x=xu_{i,i+jt}$ for
all $0\leqslant i\leqslant n-1, 0\leqslant j\leqslant m-1$.
Therefore the center $Z(H)\supseteq H_0=k[x^{\pm 1}]$. By \cite[Lemma 5.2]{BZ}, $Z(H)\subseteq H_0$ and thus $Z(H)=H_0=k[x^{\pm 1}]$. This implies that
$$\text{rank}_{Z(H)}H=\text{rank}_{H_{0}}H= nm < n^2,$$ which contradicts  the fact:
the PI-degree of $H$ is $n$ and equals the square root of the rank
of $H$ over $Z(H)$.

If $a_0=-1$, that is $u_0x=x^{-1}u_0$, we can deduce that
$u_{i,i+jt}x=x^{-1}u_{i,i+jt}$ for all $0\leqslant i\leqslant n-1,\ 0\leqslant j\leqslant m-1$ by using  the
parallel proof of the case $a_0=1$. For $s\in \mathbbm{N}$, let $z_s:=x^s+x^{-s}$.
Define $k[z_s|s\geq 0]$ to be the subalgebra of $k[x^{\pm 1}]$ generated by all $z_{s}$. Note that $k[x^{\pm 1}]$ has rank 2 over $k[z_s| s\geqslant 1]$.
Thus $Z(H)\supseteq k[z_s|s\geq 0]$. Using  \cite[Lemma 5.2]{BZ} again, we have $Z(H)= k[z_s|s\geq 0]$. Hence
$$\text{rank}_{Z(H)}H = 2nm\neq n^2$$
since $n/m >2$ by assumption. This contradicts the fact that the PI-deg$H=n$ again.

Combining these two cases, we get the desired result.
\end{proof}

We turn now to consider the case: $\io(H)=2\im(H)=2m$. In this case, $t=2$. As
discussed in the proof of Proposition \ref{p4.11}, if such $H$ exists then the following relations
\begin{equation}\label{rl} u_ix=x^{-1}u_i\ \ (0\leqslant i\leqslant m-1)\end{equation}
 hold in $H$. Using these relations and \eqref{4.6}, we have
 \begin{equation}\label{r2} \varphi_0\cdots \varphi_{m-1}=1-x^{-\omega}.
 \end{equation}To determine the structure of $H$, we need to give some harmless assumptions on the choice of $u_i$ ($0\leqslant i\leqslant m-1$): (1) We assume $\epsilon(u_0)=1$;
 (2) Let $\xi_s:=e^{\frac{2s\pi i}{\omega}}$ and thus $1-x^{\omega}=\Pi_{s\in S}(1-\xi_sx)$, where $S:=\{0,1,
\cdots, \omega-1\}$.
Since \begin{equation}\label{eq8.5}\phi_0\cdots \phi_{m-1}=y^m=1-x^\omega\end{equation} by \eqref{4.6}, we
have
\begin{equation*} \phi_i=k_ix^{c_{i}}\Pi_{s\in S_i}(1-\xi_sx),\end{equation*}
where $k_i\in k$, $c_{i}$ is an integer and $S_i$ is a subset of $S$.
The second assumption is: Take the $u_i$'s such that
 $\phi_i=\prod_{s\in
S_i}(1-\xi_sx)$. Due to Equation \eqref{eq8.5}, this assumption can be realized; (3)
By the strongly graded structure of $H$, the equality $H_2^l=H^{l}_0g$ and
the fact that $g$ is invertible in $H$, we can take $u_{j, j+2i}$ such that
\begin{equation*}
u_{j, j+2i}=
\begin{cases}
 g^{\frac{j-1}{2}}u_i  & \text{if}\ \ j \ \ \text{is odd},\\
 y^ig^{\frac{j}{2}}    & \text{if}\ \ j \ \ \text{is even},
\end{cases}
\end{equation*}
 for all
$2\leqslant j \leqslant 2m-1$. In the rest of this section, we always make these assumptions.

 We still need two notations, which appeared in the proof of Proposition \ref{p4.2}. For
 a polynomial
$f=\sum a_ix^{b_i} \in k[x^{\pm 1}]$, we denote by $\bar{f}$ the polynomial $\sum
a_ix^{-b_i}$. Then by \eqref{rl}, we have $fu_i=u_i\bar{f}$ and $u_if=\bar{f}u_i$ for all $0\leqslant i\leqslant m-1$.
For any $h\in H\otimes H$, we use
 $$h_{(s_1,t_1)\otimes (s_2,t_2)}$$
 to denote the homogeneous part of $h$ in $H_{s_{1},t_{1}}\otimes H_{s_{2},t_{2}} $.
Both these notations will be used frequently in the proof of the next proposition.

\begin{proposition}\label{p8.2} Let $H$ be a prime regular Hopf algebra with $\widetilde{H}=B(m,\omega,\gamma)$. Assume that $\io(H)=2\im(H)>2$, then as a Hopf algebra,
$$H\cong D(m,d,\xi)$$
constructed as in Subsection 4.1, where $m$ divides $\omega$ and $2$ divides $d(m+1)$.
\end{proposition}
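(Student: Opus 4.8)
The plan is to pin down, one by one, every structure constant of $H$ attached to the generators $u_0,\dots,u_{m-1}$ and to check that they agree with those of a single $D(m,d,\xi)$, so that the identity assignment on $x^{\pm1},g,y,u_0,\dots,u_{m-1}$ extends to a Hopf isomorphism $H\to D(m,d,\xi)$. Recall that we are in the only surviving group-like situation $t=n/m=2$ (otherwise Proposition \ref{p4.11} would rule $H$ out), that $u_ix=x^{-1}u_i$ by \eqref{rl}, that $u_if=\overline{f}u_i$ and $fu_i=u_i\overline{f}$, and that $\phi_0\cdots\phi_{m-1}=1-x^\omega$ by \eqref{4.6}. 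The parameters $d$ and $\xi$ of the target algebra are not yet visible inside $H$ and must be located first.

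First I would fix $d$ and $\xi$ from the algebra relations alone. Computing $yu_iy$ in two ways from \eqref{4.4} and its right-hand analogue gives $\varphi_i\phi_{i+1}=\phi_i\varphi_{i+1}$, so the ratio $r:=\phi_i\varphi_i^{-1}$ is independent of $i$; dividing $\phi_0\cdots\phi_{m-1}=1-x^\omega$ by $\varphi_0\cdots\varphi_{m-1}=1-x^{-\omega}$ from \eqref{r2} yields $r^m=-x^\omega$, which forces $r$ to be a Laurent monomial $\xi x^d$ with $\xi^m=-1$ and $d=\omega/m\in\N$ (in particular $m\mid\omega$). This recovers $yu_i=\phi_iu_{i+1}=\xi x^du_iy$. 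Next, since $u_ig,gu_i\in H_{3,3+2i}=k[x^{\pm1}]gu_i$, I write $u_ig=p_i(x)gu_i$; feeding $yg=\gamma gy$ into \eqref{4.4} and using torsion-freeness (Lemma \ref{l2.5}) gives $p_{i+1}=\gamma p_i$, hence $p_i=\gamma^ip_0$, while iterating $u_0g=p_0gu_0$ together with $g^m=x^\omega$ forces $p_0^m=x^{-2\omega}$, so $p_0=\lambda x^{-2d}$ with $\lambda^m=1$, and applying $\epsilon$ gives $\lambda=1$. Thus $u_ig=\gamma^ix^{-2d}gu_i$, and the exponents $-\tfrac{1+m}{2}d$ that will occur in the products below are integral, which is exactly the parity requirement that $(1+m)d$ be even.

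Next I would determine the coalgebra together with the precise factors $\phi_i$. Since $u_i\in H_{1,1+2i}$, the bigrading and Lemma \ref{l2.5} confine $\Delta(u_i)$ to $\bigoplus_kH_{1,1+2k}\otimes H_{1+2k,1+2i}$, and counit together with coassociativity $(\Delta\otimes\Id)\Delta=(\Id\otimes\Delta)\Delta$ force $\Delta(u_i)=\sum_k\gamma^{k(i-k)}u_k\otimes x^{-kd}g^ku_{i-k}$, i.e.\ exactly the coproduct of $D(m,d,\xi)$. Feeding this into the identity $\Delta(y)\Delta(u_i)=\Delta(\phi_i)\Delta(u_{i+1})$ (the computation already carried out in Step 1 of Proposition \ref{p4.2}) and using that $\Delta$ is injective on $H_0$ forces $\phi_i=1-\gamma^{-i-1}x^d$. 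With the $\phi_i$ known, the products $u_iu_j$, which lie in the free rank-one module $H_{2,2+2(i+j)}=k[x^{\pm1}]y^{\overline{i+j}}g$, are computed by expanding the relevant $\phi$-strings with Lemma \ref{kas} and matching coefficients, the decisive cancellations being precisely Lemma \ref{ce7}; this reproduces \eqref{eq4.3}. Finally $S(u_i)$ is read off from $(S*\Id)(u_i)=\epsilon(u_i)$: the resulting convolution sums are those of Step 4 of Proposition \ref{p4.2}, evaluated by Lemma \ref{ce1}, Lemma \ref{ce5} and Corollary \ref{ce6}, and the vanishing of the sums for $i\geqslant1$ is exactly the content of Corollary \ref{ce3}, which forces $\xi^2=\gamma$. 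As $\gamma$ is a primitive $m$-th root and $\xi^m=-1$, this makes $\xi$ a primitive $2m$-th root, so $(m,d,\xi)$ are admissible parameters and the identity on generators is the desired isomorphism.

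The hard part will be the rigidity of the factorization $1-x^\omega=\phi_0\cdots\phi_{m-1}$: a priori the $\omega$ linear factors may be distributed among the $\phi_i$ arbitrarily, and no purely multiplicative relation separates them, since the $g$-action is diagonal and only sees the product. The separation is obtained only after the coproduct has been determined, by exploiting that $\Delta$ is an algebra map on $yu_i=\phi_iu_{i+1}$; extracting the coproduct coefficients from coassociativity, and then keeping the bookkeeping in \eqref{eq4.3} and in the antipode identity under control by reducing every sum, via Lemma \ref{kas}, to one of the closed forms of Section 3 rather than expanding by hand, is the remaining technical burden.
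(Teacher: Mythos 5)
Your skeleton matches the paper's (fix $d,\xi$ from the $\phi_i/\varphi_i$ ratio; $g$-commutation; diagonal coproduct; pin the $\phi_i$; products; antipode; compare graded components), but there is a genuine gap at the central step. You assert that ``counit together with coassociativity force $\Delta(u_i)=\sum_j\gamma^{j(i-j)}u_j\otimes x^{-jd}g^ju_{i-j}$''. This is false: the coefficient system $\Delta'(u_i):=\sum_j u_j\otimes g^ju_{i-j}$ also lies in $\bigoplus_j H_{1,1+2j}\otimes H_{1+2j,1+2i}$, satisfies coassociativity identically, and satisfies both counit conditions (since $\epsilon(u_j)=0$ for $j\neq 0$ and $\epsilon(u_0)=1$ by Lemma \ref{l3.3} and the normalization). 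More structurally, counit and coassociativity are equations in the coefficients $f_{ij},h_{ij}$ alone: they cannot see $d$ or $\gamma$ at all, and they are invariant under the regauging $u_k\mapsto c_ku_k$ ($c_0=1$), which changes the coefficients and simultaneously changes $\phi_i$ to $(c_i/c_{i+1})\phi_i$. What actually pins the coefficients is multiplicativity of $\Delta$ on the relations $yu_i=\phi_iu_{i+1}=\xi x^du_iy$, and in the paper this is a \emph{simultaneous} induction (its Claims 3, 4, 5): the component $H_{11}\otimes H_{1,3+2i}$ of $\Delta(yu_i)=\Delta(\xi x^du_iy)=\Delta(\phi_iu_{i+1})$ yields $\phi_i=1-\lambda_{i+1}^{-1}x^d$ together with $f_{i+1,0}=h_{i+1,0}=1$, and only then does the component-by-component induction give $f_{ij}=\gamma^{j(i-j)}$, $h_{ij}=x^{-jd}$ — the induction step substitutes the \emph{explicit} $\phi_j$. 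Your order — full coproduct first, then $\phi_i$ from $\Delta(y)\Delta(u_i)=\Delta(\phi_i)\Delta(u_{i+1})$ — is therefore circular: the separation of the factors of $1-x^\omega$ among the $\phi_i$, which you correctly single out as the hard part, is achieved precisely by this simultaneous induction and cannot be postponed until after the coproduct is known.

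A secondary misattribution: in $H$ the relations \eqref{eq4.3} are not obtained by ``expanding $\phi$-strings with Lemma \ref{kas} and matching coefficients via Lemma \ref{ce7}'' — that computation is the \emph{verification} that the already-defined $D(m,d,\xi)$ satisfies $\Delta(u_iu_j)=\Delta(u_i)\Delta(u_j)$ (Proposition \ref{p4.2}); expanding $\Delta(u_i)\Delta(u_j)$ inside $H$ involves the unknown products $u_su_t$ themselves. The paper instead reduces every $u_iu_j$ to $u_0^2=\alpha_0 g$ by associativity (computing $y^{i+j}u_0^2$ in two ways), shows $\alpha_0=\frac1m x^a\phi_0\cdots\phi_{m-2}$ using the strong grading (to force invertibility of the residual factor) and $\epsilon$, and only pins $a=-\frac{1+m}{2}d$, together with the monomial $h$ in $S(u_i)=(-1)^i\xi^{-i}\gamma^{-i(i+1)/2}x^{id}hg^{m-1-i}u_i$, from $(S*\Id)(u_0)=(\Id*S)(u_0)=1$ and $(\Id*S)(u_1)=0$; in particular the evenness of $(1+m)d$, which you asserted in passing, is a \emph{consequence} of that computation, not an input. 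On the positive side, your opening moves are sound and partly cleaner than the paper's: the constancy of $r=\phi_i\varphi_i^{-1}$ plus $r^m=-x^\omega$ recovers Claim 1, and your recursion $p_{i+1}=\gamma p_i$ from $y(u_ig)=(yu_i)g$ combined with $\epsilon(u_0g)=\epsilon(p_0gu_0)$ gives $u_ig=\gamma^ix^{-2d}gu_i$ outright, bypassing the paper's detour through $u_iu_j\neq0$ (primeness) and the deferred sign determination via $\epsilon(\phi_{m-1})=0$. But as written the proposal does not prove the proposition: the coproduct/$\phi_i$ determination must be rebuilt as the paper's simultaneous induction, and the product and antipode constants must be derived from associativity, strong grading and the antipode axioms rather than from the Section 3 identities, which only certify consistency.
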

\noindent\emph{Proof.} We divide the proof into several steps. \\[2mm]
\emph{Claim 1. We have $m|\omega$ and $yu_i=\phi_i u_{i+1}=\xi x^du_iy$ for $d=\frac{\omega}{m}$ and some $\xi\in k$ satisfying $\xi^{m}=-1$.}

\emph{Proof of Claim 1:} By associativity of the multiplication, we have many equalities:
\begin{align*}
yu_0y^{m-1}&=\phi_0\varphi_1\varphi_2\cdots \varphi_{m-1}u_0\\
           &=\varphi_0\phi_1\varphi_2\cdots \varphi_{m-1}u_0\\
           &\cdots\\
           &=\varphi_0\varphi_1\varphi_2\cdots \phi_{m-1}u_0,
\end{align*}
which imply that $\phi_i\varphi_j=\varphi_i\phi_j$ for all $0\leqslant i,
j\leqslant m-1$. Using associativity again, we have
\begin{align*}
y^mu_0y^{m(m-1)}&=(1-x^\omega)u_0(1-x^\omega)^{m-1}=-x^\omega(1-x^{-\omega})^mu_0\\
           &=-x^\omega(\varphi_0\varphi_1\varphi_2\cdots \varphi_{m-1})^mu_0\\
           &=(\phi_0\varphi_1\varphi_2\cdots \varphi_{m-1})^mu_0\\
           &=(\varphi_0\phi_1\varphi_2\cdots \varphi_{m-1})^mu_0\\
           &\cdots\\
           &=(\varphi_0\varphi_1\varphi_2\cdots \phi_{m-1})^mu_0,
\end{align*}
where the fourth ``$=$", for example, is gotten in the following way: We multiply $u_0$ by one $y$ from left side at first, then multiply it with $y^{m-1}$ from right side, then continue the procedures above. From these equalities, we have
 $$\phi_i^m=-x^\omega\varphi_i^m$$ for all $0\leqslant
i\leqslant m-1$. So $\phi_i=\xi_ix^d\varphi_i$ where $d=\frac{\omega}{m}$ and $\xi_i\in k$ satisfying $\xi_i^m=-1$. Applying
$\phi_i\varphi_j=\varphi_i\phi_j$, we can see $\xi_i=\xi_j$ for all
$0\leqslant i, j\leqslant m-1$, and we write $\xi:=\xi_i$. Therefore
we have $yu_i=\xi x^du_iy$, where $d=\omega/m$ is an integer.\qed

 In the following of the proof, $d$ is fixed to be the number $\omega/m$.

\noindent\emph{Claim 2. We have $u_ig=\lambda_i x^{-2d}gu_i$ for $\lambda_i=\pm \gamma^i$ and $0\leqslant i\leqslant m-1$.}

\emph{Proof of Claim 2:}
Since $g$ is invertible in $H$, $u_ig=\psi_igu_i$ for some
invertible $\psi_i\in k[x^{\pm 1}]$. Then $u_ig^m=\psi_i^mg^mu_i$
yields $\psi_i^m=x^{-2\omega}$. So $\psi_i=\lambda_ix^{-2d}$ for $\lambda_i\in k$ with
$\lambda_i^m=1$. Our last task is to show that $\lambda_i=\pm \gamma^i$. To show this, we need a preparation, that is, we need to show that
$u_iu_j\neq 0$ for all $i,j$. Otherwise, assume that there exist $i_0,j_0\in \{0,\ldots,m-1\}$ such that
$u_{i_0}u_{j_0}=0$. Using Claim 1, we can find that $u_{i_0}u_j\equiv 0$ and $u_iu_{j_0}\equiv 0$ for all $i,j$. Let $(u_{i_0})$ and $(u_{j_0})$ be the ideals generated by $u_{i_0}$ and $u_{j_0}$ respectively. Then it is not hard to find that
$(u_{i_0})(u_{j_0})=0$ which contradicts  $H$ being prime. So we always have
\begin{equation}\label{r3} u_iu_j\neq 0
\end{equation} for all $0\leqslant i,j\leqslant m-1$.

 Applying this observation, we have $0\neq
u_i^2\in H_{2,2+4i}=k[x^{\pm 1}]y^{2i}g$,
$u_i^2g=\psi_i\overline{\psi_i}gu_i^2=\gamma^{2i}gu_i^2$. Thus
$\psi_i=\pm \gamma^ix^{-2d}$ which implies that $\lambda_i=\pm \gamma^i$. The proof is ended.\qed

We can say more about $\lambda_i$ at this stage.  By $0\neq
u_iu_jg=\gamma^{i+j}gu_iu_j$, we know that $\psi_i=\gamma^ix^{-2d}$ for all $i$ or
$\psi_i=-\gamma^ix^{-2d}$ for all $i$. So
 \begin{equation}\label{r5} \lambda_{i}=\gamma^{i}\ \ \textrm{or}\ \ \lambda_{i}=-\gamma^{i}
 \end{equation} for all $0\leqslant i\leqslant m-1$. In fact, we will show that  $\psi_i=\gamma^ix^{-2d}$ for all $i$ later.

\noindent\emph{Claim 3. For each $0\leqslant i\leqslant m-1$, there are $f_{ij},h_{ij}\in k[x^{\pm 1}]$ with $h_{ij}$ monic such that \begin{equation}\label{r4}\D(u_i)=\sum_{j=0}^{m-1}f_{ij}u_j\otimes
h_{ij}g^ju_{i-j},\end{equation}
where the following $i-j$ is interpreted \emph{mod} $m$.}

\emph{Proof of Claim 3:} Since $u_i\in H_{1,1+2i}$, $\D(u_i)\in H_1^l\otimes H_{1+2i}^r$. Noting that $H_1^l=\bigoplus_{j=0}^{m-1}H_0u_j$ and $H_{1+2i}^r=\bigoplus_{s=0}^{m-1}H_0g^su_{i-s}$, we can write $$\D(u_i)=\sum_{0\leqslant j,s\leqslant m-1}F^i_{js}(u_j\otimes g^su_{i-s}),$$ where $F^i_{js}\in H_0\otimes H_0$.
Then we divide the proof into two steps.

\noindent $\bullet$ \emph{Step} 1 ($\D(u_i)=\sum_{0\leqslant j\leqslant m-1}F^i_{jj}(u_j\otimes g^ju_{i-j})$).

Recall that $u_ig=\lambda_ix^{-2d}gu_i$, where $\lambda_i$ is either $\gamma^i$ for all $i$ or $-\gamma^i$ for all $i$. The equations
\begin{align*}
\D(u_ig)&=\D(u_i)\D(g)=\sum_{0\leqslant j,s\leqslant m-1}F^i_{js}(u_j\otimes g^su_{i-s})(g\otimes g)\\
           &=\sum_{0\leqslant j,s\leqslant m-1}F^i_{js}(\lambda_j x^{-2d}gu_j\otimes \lambda_{i-s} x^{-2d}g^{s+1}u_{i-s})\\
           &=\sum_{0\leqslant j,s\leqslant m-1}\lambda_j\lambda_{i-s}( x^{-2d}g\otimes x^{-2d}g)F^i_{js}(u_j\otimes g^s u_{i-s})
\end{align*}
and
\begin{align*}
\D(\lambda_i x^{-2d}gu_i)&=\lambda_i ( x^{-2d}g\otimes x^{-2d}g) \D(u_i)\D(g)\sum_{0\leqslant j,s\leqslant m-1}F^i_{js}(u_j\otimes g^su_{i-s})\\
           &=\sum_{0\leqslant j,s\leqslant m-1}\lambda_i ( x^{-2d}g\otimes x^{-2d}g)F^i_{js}(u_j\otimes g^s u_{i-s})
\end{align*}
imply that $\lambda_i=\lambda_j\lambda_{i-s}$ for all $j, s$. If $\lambda_i=-\gamma^i$ for all $i$, then we have $-\gamma^i=\lambda_i=\lambda_j\lambda_{i-s}=\gamma^{j+i-s}.$ This implies $j=s\pm m/2$. Applying $(\epsilon\otimes \id)$ to $\D(u_i)$, $$(\epsilon\otimes \id)\D(u_i)=(\epsilon\otimes \id)(F^i_{0,\; m/2})g^{m/2}u_{i-m/2}\neq u_i,$$ which is absurd. If $\lambda_i=\gamma^i$ for all $i$, then $\gamma^i=\lambda_i=\lambda_j\lambda_{i-s}=\gamma^{j+i-s}$. This implies $j=s$ (which is compatible with the equality $(\epsilon\otimes \id)\D(u_i)=u_i$). So we get $F^i_{js}\neq 0$ only if $j=s$ and $\lambda_i=\gamma^i$ for all $i$. Thus we have $\D(u_i)=\sum_{0\leqslant j\leqslant m-1}F^i_{jj}(u_j\otimes g^ju_{i-j})$ for all $i$.

\noindent $\bullet$ \emph{Step} 2 (There exist $f_{ij}, h_{ij} \in H_{0}$ with $h_{ij}$ monic such that $F^i_{jj}=f_{ij}\otimes h_{ij}$ for $0\leqslant i,j\leqslant m-1$).

We replace $F^i_{jj}$ by $F^i_{j}$ for convenience. Since
\begin{align*}
(\D\otimes \id)\D(u_i)&=(\D\otimes \id)(\sum_{0\leqslant j\leqslant m-1}F^i_{j}(u_j\otimes g^ju_{i-j}))\\
           &=\sum_{0\leqslant j\leqslant m-1}(\D\otimes \id)(F^i_{j})(\sum_{0\leqslant s\leqslant m-1}F_s^j(u_s\otimes g^su_{j-s})\otimes g^ju_{i-j})\\
           &=\sum_{0\leqslant j, s\leqslant m-1}(\D\otimes \id)(F^i_{j})(F_s^j\otimes 1)(u_s\otimes g^su_{j-s}\otimes g^ju_{i-j})
\end{align*}
and
\begin{align*}
(\id\otimes\D)\D(u_i)&=(\id\otimes\D)(\sum_{0\leqslant j\leqslant m-1}F^i_{j}(u_j\otimes g^ju_{i-j}))\\
           &=\sum_{0\leqslant j\leqslant m-1}(\id\otimes\D)(F^i_{j})(u_j\otimes(\sum_{0\leqslant s\leqslant m-1}F_s^{i-j}(g^ju_s\otimes g^{j+s}u_{i-j-s}))\\
           &=\sum_{0\leqslant j, s\leqslant m-1}(\id\otimes\D)(F^i_{s})(1\otimes F_{j-s}^{i-s})(u_s\otimes g^su_{j-s}\otimes g^ju_{i-j}),
\end{align*}
we have \begin{equation}\label{eqclaim3}(\D\otimes \id)(F^i_{j})(F_s^j\otimes 1)=(\id\otimes\D)(F^i_{s})(1\otimes F_{j-s}^{i-s})\end{equation} for all $0\leqslant i, j, s\leqslant m-1$.

Begin with the case $i=j=s=0$. Let $F_0^0=\sum_{p,q}k_{pq}x^p\otimes x^q$. Comparing equation
\begin{align*}
(\D\otimes \id)(F^0_{0})(F_0^0\otimes 1)&=(\sum_{p,q}k_{pq}x^p\otimes x^p\otimes x^q)(\sum_{p',q'}k_{p'q'}x^{p'}\otimes x^{q'}\otimes 1)\\
           &=(\sum_{p,q,p',q'}k_{pq}k_{p'q'}x^{p+p'}\otimes x^{p+q'}\otimes x^q)
\end{align*}
and equation
\begin{align*}
(\id\otimes\D)(F^0_{0})(1\otimes F_0^0)&=(\sum_{p,q}k_{pq}x^p\otimes x^q\otimes x^q)(\sum_{p',q'}k_{p'q'}1 \otimes x^{p'}\otimes x^{q'} )\\
           &=(\sum_{p,q,p',q'}k_{pq}k_{p'q'}x^{p}\otimes x^{q+p'}\otimes x^{q+q'}),
\end{align*}
one can see that $p=q=0$ by comparing the degrees of $x$ in these two expressions. Then $F_0^0= 1\otimes 1$ by applying $(\epsilon\otimes \id)\D$ to $u_0$.
Next, consider the case $j=s=0$. Write $F_0^i=\sum_{p,q}k_{pq}x^p\otimes x^q$. Similarly, we have $F_0^i= x^{a_i}\otimes 1$ for some $a_i\in \mathbb{Z}$ by the equation $$(\D\otimes \id)(F^i_{0})(F_0^0\otimes 1)=(\id\otimes\D)(F^i_{0})(1\otimes F_0^i).$$  Finally, write $F_j^i=\sum_{p,q}k_{pq}x^p\otimes x^q$ and consider the case $s=0$.
Let $F_0^i= x^{a_i}\otimes 1$ and $F_0^j=x^{a_j}\otimes 1$. The equation
\begin{align*}
(\sum_{p,q}k_{pq}x^{p+a_j}\otimes x^p\otimes x^q)=(\D\otimes \id)(F^i_{j})(F_0^j\otimes 1)=(\id\otimes\D)(F^i_{0})(1\otimes F_j^i)=(\sum_{p,q}k_{pq}x^{a_i}\otimes x^p\otimes x^q)
\end{align*}
shows that $p=a_i-a_j$, that is, $F_j^i=x^{c_{ij}}\otimes \beta_{ij}$ some $c_{ij}\in \mathbb{Z}, \beta_{ij}\in H_0$.

By steps 1 and 2, $F^i_j$ can be written as $f_{ij}\otimes h_{ij}$ with $h_{ij}$ monic after multiplying suitable scalar, where $f_{ij}, h_{ij}\in k[x^{\pm 1}]$. That is, $$\D(u_i)=\sum_{j=0}^{m-1}f_{ij}u_j\otimes
h_{ij}g^ju_{i-j},$$ where $f_{ij},h_{ij}\in k[x^{\pm 1}]$ with $h_{ij}$ monic. \qed

Since $\lambda_i=\gamma^i$ for all $i$ has been shown above, we can improve Claim 2 as

\noindent \emph{Claim 2'. We have $u_ig=\gamma^{i} x^{-2d}gu_i$ for $0\leqslant i\leqslant m-1$.}

By Claim 2', we have a unified formula in $H$: For all $s\in \mathbbm{Z}$,
\begin{equation}\label{r7} u_ig^s=\gamma^{is}x^{-2sd}g^su_i.\end{equation}

\noindent\emph{Claim 4. We have $\phi_i=1-\gamma^{-i-1}x^d$ for $0\leqslant i\leqslant m-1$.}

\emph{Proof of Claim 4:} By Claim 3, there are polynomials $f_{0j},h_{0j},f_{1j},h_{1j},$ such that
$$\D(u_0)=u_0\otimes u_0+ f_{01}u_1\otimes
h_{01}gu_{m-1}+ \cdots  + f_{0, m-1}u_{m-1}\otimes h_{0,
m-1}g^{m-1}u_1,$$ $$\D(u_1)=f_{10}u_0\otimes u_1+ u_1\otimes
h_{11}gu_{0}+ \cdots  + f_{1, m-1}u_{m-1}\otimes h_{1,
m-1}g^{m-1}u_2.$$

Firstly, we will show $\phi_0=1-\gamma^{-1}x^d$ by considering the equations
$$\D(yu_0)_{11\otimes 13}=\D(\xi x^d u_0y)_{11\otimes 13}=\D(\phi_0u_1)_{11\otimes 13}.$$
Direct computations show that
\begin{align*}
\D(yu_0)_{11\otimes 13}&=u_0\otimes yu_0+y f_{0, m-1} u_{m-1}\otimes g h_{0, m-1} g^{m-1}u_1\\
&=u_0\otimes \phi_0u_1+f_{0, m-1}\phi_{m-1}u_0\otimes x^{md} h_{0,m-1} u_1,\\
\D(\xi x^d u_0y)_{11\otimes 13}&=\xi x^du_0\otimes x^du_0y+ \xi x^d f_{0, m-1} u_{m-1}y\otimes x^d h_{0, m-1} g^{m-1}u_1 g\\
                               &=x^du_0\otimes \phi_0u_1+f_{0, m-1}\phi_{m-1}u_0\otimes \gamma x^{(m-1)d} h_{0,
                                 m-1} u_1.
\end{align*}
Owing to $\D(yu_0)_{11\otimes 13}=\D(\xi x^d u_0y)_{11\otimes 13}$,
$$(1-x^d)u_0\otimes \phi_0u_1+ f_{0, m-1}\phi_{m-1}u_0\otimes (x^d-\gamma) x^{(m-1)d} h_{0, m-1} u_1=0.$$
Thus we can assume $\phi_0=c_0 (x^d-\gamma) x^{(m-1)d} h_{0,
m-1}$ for some $0\neq c_0\in k$. Then $1-x^d=-c_0^{-1}f_{0,
m-1}\phi_{m-1}$. Therefore,
\begin{align*}
\D(yu_0)_{11\otimes 13}&=u_0\otimes \phi_0u_1-c_0(1-x^d)u_0\otimes \frac{1}{c_0}\frac{x^d}{x^d-\gamma} \phi_0 u_1\\
                       &=u_0\otimes (1-\frac{x^d}{x^d-\gamma})\phi_0u_1+x^du_0\otimes \frac{x^d}{x^d-\gamma} \phi_0u_1\\
                       &=u_0\otimes (\frac{-\gamma}{x^d-\gamma})\phi_0u_1+x^du_0\otimes \frac{x^d}{x^d-\gamma} \phi_0u_1,
\end{align*} where $\frac{1}{x^d-\gamma} \phi_0$ is understood as $c_0 x^{(m-1)d}h_{0,m-1}$.
Note that $\D(\phi_0u_1)_{11\otimes 13}=\D(\phi_0)(f_{1 0}u_0\otimes
u_1)$. Comparing the first components of $\D(yu_0)_{11\otimes 13}$
and $\D(\phi_0u_1)_{11\otimes 13}$, we get $\phi_0=1+\theta x^d$ for
some $\theta\in k$. Then it is not hard to see that $f_{1 0}=1,
f_{0, m-1}=\gamma^{-1}, h_{0, m-1}=x^{-(m-1)d}$ and
$\theta=-\gamma^{-1}$. So $\phi_0=1-\gamma^{-1}x^d$.

Secondly, we want to determine $\phi_s$ for $s\geqslant 1$. To attack this, we will prove the fact
\begin{equation}\label{r6} f_{j0}=h_{j0}=1 \end{equation}
 for all $0\leqslant j\leqslant m-1$ at the same time. We proceed by induction. We already know that $f_{00}=h_{00}=f_{1 0}=h_{1 0}=1$. Assume that $f_{i, 0}=h_{i, 0}=1$ now. Similarly, direct computations show that \begin{align*}
\D(yu_i)_{11\otimes (1, 3+2i)}&=u_0\otimes yu_i+y f_{i, m-1} u_{m-1}\otimes g h_{i, m-1} g^{m-1}u_{i+1}\\
                            &=u_0\otimes \phi_iu_{i+1}+f_{i, m-1}\phi_{m-1}u_0\otimes x^{md} h_{i, m-1} u_{i+1},\\
\D(\xi x^d u_iy)_{11\otimes (1, 3+2i)}&=\xi x^du_0\otimes x^du_iy+ \xi x^d f_{i, m-1} u_{m-1} y\otimes x^d h_{i, m-1} g^{m-1}u_{i+1} g\\
                                 &=x^du_0\otimes \phi_iu_{i+1}+f_{i, m-1}\phi_{m-1}u_0\otimes \gamma^{i+1} x^{(m-1)d} h_{i, m-1} u_{i+1}.
\end{align*}
By $\D(yu_i)_{11\otimes (1, 3+2i)}=\D(\xi x^d u_iy)_{11\otimes (1, 3+2i)}$, $$(1-x^d)u_0\otimes \phi_iu_{i+1}+ f_{i, m-1}\phi_{m-1}u_0\otimes (x^d-\gamma^{i+1}) x^{(m-1)d} h_{i, m-1} u_{i+1}=0.$$
Thus we can assume $\phi_i=c_i (x^d-\gamma^{i+1}) x^{(m-1)d} h_{i,
m-1}$ for some $0\neq c_i\in k$. Then $1-x^d=-c_i^{-1}f_{i,
m-1}\phi_{m-1}$. Therefore
\begin{align*}
\D(yu_i)_{11\otimes (1, 3+2i)}&=u_0\otimes \phi_iu_{i+1}-c_i(1-x^d)u_0\otimes \frac{1}{c_i}\frac{x^d}{x^d-\gamma^{i+1}} \phi_i u_{i+1}\\
                            &=u_0\otimes (\frac{-\gamma^{i+1}}{x^d-\gamma^{i+1}})\phi_iu_{i+1}+x^du_0\otimes \frac{x^d}{x^d-\gamma^{i+1}} \phi_iu_{i+1}.
\end{align*}
Note that $\D(\phi_iu_{i+1})_{11\otimes (1,3+2i)}=\D(\phi_i)(f_{i+1,
0}u_0\otimes h_{i+1, 0}u_{i+1})$. Comparing the first components of
$\D(yu_i)_{11\otimes (1, 3+2i)}$ and $\D(\phi_iu_{i+1})_{11\otimes
(1,3+2i)}$, we get $\phi_i=1-\gamma^{-i-1}x^d$ similarly. And it is
not hard to see that $f_{i+1, 0}=h_{i+1, 0}=1$ and $f_{i,
m-1}=\gamma^{-i-1}, h_{i, m-1}=x^{-(m-1)d}$. So we prove that
$f_{i+1, 0}=h_{i+1, 0}=1$ at the same time.\qed

\noindent \emph{Claim 5. The coproduct of $H$ is given by
$$\D(u_i)=\sum_{j=0}^{m-1}\gamma^{j(i-j)}u_j\otimes
x^{-jd}g^ju_{i-j}$$for $0\leqslant i\leqslant m-1$.}

\emph{Proof of Claim 5:} By Claim 3, $\D(u_i)=\sum_{j=0}^{m-1}f_{ij}u_j\otimes
h_{ij}g^ju_{i-j}$. So, to show this claim, it is enough to determine the explicit form of every $f_{ij}$ and $h_{ij}$. By \eqref{r6}, $f_{i,0}=h_{i,0}=1$. We will prove that $f_{ij}=\gamma^{j(i-j)}$ and $h_{ij}=x^{-jd}$ for all $0\leqslant i,
j \leqslant m-1$ by induction. So it is enough to show that
$f_{i,j+1}=\gamma^{(j+1)(i-j-1)}$ and $h_{i, j+1}=x^{-(j+1)d}$
 under the hypothesis of $f_{ij}=\gamma^{j(i-j)}$ and
 $h_{ij}=x^{-jd}$. In fact,
\begin{align*}
\D(yu_i)_{(1, 3+2j)\otimes (3+2j, 3+2i)}&=yf_{ij}u_j\otimes gh_{ij}g^ju_{i-j}+ f_{i, j+1} u_{j+1}\otimes y h_{i, j+1} g^{j+1}u_{i-j-1}\\
                                    &=f_{ij}yu_j\otimes h_{ij}g^{j+1}u_{i-j}+ f_{i, j+1} u_{j+1}\otimes \gamma^{j+1} h_{i, j+1} g^{j+1}y u_{i-j-1},\\
\D(\xi x^d u_iy)_{(1, 3+2j)\otimes (3+2j, 3+2i)}&=\xi x^df_{ij}u_jy\otimes x^dh_{ij}g^ju_{i-j}g+ \xi x^df_{i, j+1} u_{j+1}\otimes x^d h_{i, j+1} g^{j+1}u_{i-j-1} y\\
                                            &=f_{ij}yu_j\otimes \gamma^{i-j}x^{-d}h_{ij}g^{j+1}u_{i-j} + x^d f_{i, j+1} u_{j+1}\otimes  h_{i, j+1} g^{j+1}y u_{i-j-1}.
\end{align*}
By $\D(yu_i)_{(1, 3+2j)\otimes (3+2j, 3+2i)}=\D(\xi x^d u_iy)_{(1,
3+2j)\otimes (3+2j, 3+2i)}$,
$$f_{ij}yu_j\otimes (1-\gamma^{i-j}x^{-d})h_{ij}g^{j+1}u_{i-j} = (x^d-\gamma^{j+1})f_{i, j+1} u_{j+1}\otimes h_{i, j+1} g^{j+1}y u_{i-j-1}.$$
By induction, we have
\begin{align*}&\gamma^{j(i-j)}(1-\gamma^{-j-1}x^{d})u_{j+1}\otimes
(x^d-\gamma^{i-j})x^{-(j+1)d}g^{j+1}u_{i-j}\\& =
(x^d-\gamma^{j+1})f_{i, j+1} u_{j+1}\otimes
(1-\gamma^{-i+j}x^d)h_{i, j+1} g^{j+1} u_{i-j}.\end{align*}
This implies that
$h_{i, j+1}=x^{-(j+1)d}$ and
$f_{i,j+1}=\gamma^{i-j}\gamma^{-j-1}\gamma^{j(i-j)}=\gamma^{(j+1)(i-j-1)}$.\qed

\noindent \emph{Claim 6. For $0\leqslant i,j\leqslant m-1$, the multiplication between $u_i$ and $u_j$ satisfies that $$u_iu_j=(-1)^{-j}\xi^{-j}\gamma^{\frac{j(j+1)}{2}}\frac{1}{m}x^{a}\;{[i, m-2-j]}\;y^{\overline{i+j}}g$$
for some $a\in \mathbbm{Z}$ and $i+j$ is interpreted \emph{mod} $m$.}

\emph{Proof of Claim 6:} We need to consider the relation between $u_0^2$ and $u_ju_{m-j}$
for all $1\leqslant j \leqslant m-1$ at first.

By definition, $x^d\overline{\phi_s}=-\gamma^{-s-1}\phi_{m-s-2}$ for
all $s$.  Then
\begin{align*}
y^mu_0^2&=\xi^{m-j}x^{(m-j)d}y^ju_0y^{m-j}u_0=\xi^{m-j}x^{(m-j)d}\phi_0\cdots \phi_{j-1}u_j\phi_0\cdots \phi_{m-j-1}u_{m-j}\\
         &=\xi^{m-j}x^{(m-j)d}\phi_0\cdots \phi_{j-1}\cdot\overline{\phi_0}\cdots \overline{\phi_{m-j-1}}u_ju_{m-j}\\
         &=(-1)^{m-j}\xi^{m-j} \gamma^{-\frac{(m-j)(m-j+1)}{2}}\phi_0\cdots \phi_{m-2} \cdot \phi_{j-1}u_ju_{m-j}.
\end{align*}
So
\begin{equation}\label{r8}\phi_{m-1}u_0^2=(-1)^{m-j}\xi^{m-j}\gamma^{-\frac{(m-j)(m-j+1)}{2}}\phi_{j-1}u_ju_{m-j} .\end{equation}
Since $u_0^2, u_ju_{m-j}\in H_{22}=k[x^{\pm 1}]g$, we may assume
$u_0^2=\alpha_0 g, u_ju_{m-j}=\alpha_j g$ for some $\alpha_0,
\alpha_j \in k[x^{\pm 1}]$ for all $1\leqslant j \leqslant m-1$.

Then Equation \eqref{r8} implies $\alpha_0=\alpha \phi_0\cdots
\phi_{m-2}$ for some $\alpha \in k[x^{\pm 1}]$. We claim $\alpha$ is
invertible. Indeed, by
$\phi_{m-1}\alpha_0=(-1)^{m-j}\xi^{m-j}\gamma^{-\frac{(m-j)(m-j+1)}{2}}\phi_{j-1}
\alpha_j$, we have
$$\alpha_j=(-1)^{j-m}\xi^{j-m}\gamma^{\frac{(m-j)(m-j+1)}{2}}\alpha
\;{]{j-1}, {j-1}[}.$$ Then
$$H_{11}\cdot H_{11} + \sum_{j=1}^{m-1}H_{1, 1+2j}\cdot H_{1,
1+2(m-j)}\subseteq \alpha H_{22}.$$
By the strong grading of $H$,
$$H_{22}=H_{11}\cdot H_{11} + \sum_{j=1}^{m-1}H_{1, 1+2j}\cdot H_{1,
1+2(m-j)},$$  which shows that $\alpha$ must be invertible. Since
$\epsilon(\alpha_{0})=1$ and $\epsilon(\phi_0\cdots \phi_{m-2})=m$, we
may assume $\alpha_0=\frac{1}{m}x^a \phi_0\cdots \phi_{m-2}$ for
some integer $a$. Thus
\begin{align*}
u_ju_{m-j}&=(-1)^{j-m}\xi^{j-m}\gamma^{\frac{(m-j)(m-j+1)}{2}}
\frac{1}{m} x^a \;{]{j-1}, {j-1}[}\;g \\
&=(-1)^{j}\xi^{j}\gamma^{\frac{-j(-j+1)}{2}}
\frac{1}{m} x^a \;{]{j-1}, {j-1}[}\;g.
\end{align*}

\noindent\emph{Case} 1. If $0\leqslant i+j \leqslant m-2$, then
\begin{align*}
y^{i+j}u_0^2&=\xi^jx^{jd}y^iu_0y^ju_0\\
            &=\xi^jx^{jd}\phi_0\cdots \phi_{i-1}u_i\phi_0\cdots \phi_{j-1}u_j\\
            &=\xi^jx^{jd}\phi_0\cdots \phi_{i-1}\cdot\overline{\phi_0}\cdots \overline{\phi_{j-1}}u_iu_j\\
            &=(-1)^j\xi^j\gamma^{-\tfrac{j(j+1)}{2}}\phi_0\cdots \phi_{i-1}\cdot\phi_{m-1-j}\cdots \phi_{m-2}u_iu_j.
\end{align*}
So
$$u_iu_j=(-1)^{-j}\xi^{-j}\gamma^{\frac{j(j+1)}{2}}\frac{1}{m}x^a\phi_i\cdots
\phi_{m-2-j}y^{i+j}g.$$
\noindent\emph{Case} 2. If $i+j=m-1$, then
\begin{align*}
y^{i+j}u_0^2&=\xi^jx^{jd}y^iu_0y^ju_0\\
            &=\xi^jx^{jd}\phi_0\cdots \phi_{i-1}u_i\phi_0\cdots \phi_{j-1}u_j\\
            &=\xi^jx^{jd}\phi_0\cdots \phi_{i-1}\cdot\overline{\phi_0}\cdots \overline{\phi_{j-1}}u_iu_j\\
            &=(-1)^j\xi^j\gamma^{-\tfrac{j(j+1)}{2}}\phi_0\cdots \phi_{i-1}\cdot\phi_{m-1-j}\cdots \phi_{m-2}u_iu_j\\
            &=(-1)^j\xi^j\gamma^{-\tfrac{j(j+1)}{2}}\phi_0\cdots\phi_{m-2}u_iu_j.
\end{align*}
So
$$u_iu_j=(-1)^{-j}\xi^{-j}\gamma^{\frac{j(j+1)}{2}}\frac{1}{m}x^ay^{i+j}g.$$
\noindent\emph{Case} 3. If $m \leqslant i+j \leqslant 2m-2$, then
\begin{align*}
y^{i+j}u_0^2&=\xi^jx^{jd}y^iu_0y^ju_0\\
            &=\xi^jx^{jd}\phi_0\cdots \phi_{i-1}u_i\phi_0\cdots \phi_{i-1}u_j\\
            &=\xi^jx^{jd}\phi_0\cdots \phi_{i-1}\cdot\overline{\phi_0}\cdots \overline{\phi_{j-1}}u_iu_j\\
            &=(-1)^j\xi^j\gamma^{-\tfrac{j(j+1)}{2}}\phi_0\cdots \phi_{i-1}\cdot\phi_{m-1-j}\cdots \phi_{m-2}u_iu_j.
\end{align*}
So
$$u_iu_j=(-1)^{-j}\xi^{-j}\gamma^{\frac{j(j+1)}{2}}\frac{1}{m}x^a\phi_i \cdots \phi_{m-1}\phi_0\cdots
\phi_{m-2-j}y^{i+j-m}g.$$

Using the notations introduced in Subsection 4.1, we have a unified
expression:
\begin{align*}
u_iu_j&=(-1)^{-j}\xi^{-j}\gamma^{\frac{j(j+1)}{2}}\frac{1}{m}x^{a}\;{[i, m-2-j]}\;y^{\overline{i+j}}g\\
      &=(-1)^{-j}\xi^{-j}\gamma^{\frac{j(j+1)}{2}}\frac{1}{m}x^{a}\;{]{-1-j},{i-1}[}\;y^{\overline{i+j}}g
\end{align*}
for all $i, j$. \qed

\noindent \emph{Claim 7. We have $\xi^2=\gamma, \ a=-\frac{1+m}{2}d$ and
$$S(u_i)=(-1)^i \xi^{-i}\gamma^{-\frac{i(i+1)}{2}} x^{id+\frac{3}{2}(1-m)d}
g^{m-1-i}u_i$$ for $0\leqslant i\leqslant m-1.$}

\emph{Proof of Claim 7:}
Since $S(H_{ij})=H_{-j, -i}$,  $S(u_0)=hg^{m-1}u_0$ for some
$h\in k[x^{\pm 1}]$. Combining
\begin{align*}
S(yu_0)&=S(u_0)S(y)=hg^{m-1}u_0 \cdot (-yg^{-1})=-\xi x^{-d}hg^{m-1}yu_0g^{-1}\\
       &=-\xi x^{-d}\phi_0 hg^{m-1}u_1g^{-1}=-\xi \gamma^{-1} x^{d}\phi_0 h g^{m-2} u_1
\end{align*} with $$S(yu_0)=S(\phi_0 u_1)=S(u_1)S(\phi_0)=\phi_0 S(u_1),$$
we get $S(u_1)=-\xi \gamma^{-1} x^{d} h g^{m-2} u_1$. The computation above tells us that we can prove that
$$S(u_i)=(-1)^i \xi^{-i}\gamma^{-\frac{i(i+1)}{2}} x^{id} h
g^{m-1-i}u_i$$ by induction. In fact, assume that $S(u_i)=(-1)^i \xi^{-i}\gamma^{-\frac{i(i+1)}{2}} x^{id} h
g^{m-1-i}u_i$, by combining
\begin{align*}
S(yu_i)&=S(u_i)S(y)=(-1)^i \xi^{-i}\gamma^{-\frac{i(i+1)}{2}}x^{id}hg^{m-1-i}u_i (-yg^{-1})\\
       &=\phi_i\cdot(-1)^{i+1} \xi^{-(i+1)}\gamma^{-\frac{(i+1)(i+2)}{2}}x^{(i+1)d} h g^{m-2-i}u_{i+1}
\end{align*} with $$S(yu_i)=S(\phi_i u_{i+1})=S(u_{i+1})S(\phi_i)=\phi_i S(u_{i+1}),$$
we find that $S(u_{i+1})=(-1)^{i+1} \xi^{-(i+1)}\gamma^{-\frac{(i+1)(i+2)}{2}}
x^{(i+1)d} h g^{m-2-i}u_{i+1}$.

In order to determine the relationship between $\xi$ and $\gamma$,
we consider the equality $(\Id*S)(u_1)=0$. By computation,
\begin{align*}
(\Id*S)(u_1)&=\sum_{j=0}^{m-1}\gamma^{j(1-j)}u_jS(x^{-jd}g^ju_{1-j}) \\
           &=\sum_{j=0}^{m-1}\gamma^{j(1-j)}u_j\cdot (-1)^{1-j}\xi^{j-1}\gamma^{-\frac{(1-j)(2-j)}{2}}x^{(1-j)d}hg^{m-2+j}u_{1-j}g^{-j}x^{jd}\\
           &=\sum_{j=0}^{m-1}(-1)^{1-j}\xi^{j-1}\gamma^{j(1-j)-\frac{(1-j)(2-j)}{2}}x^{(2j-1)d}\overline{h}u_jg^{m-2+j}u_{1-j}g^{-j}\\
           &=\sum_{j=0}^{m-1}(-1)^{1-j}\xi^{j-1}\gamma^{-\frac{(1-j)(2-j)}{2}-2j}x^{(3-2m)d}\overline{h}g^{m-2}u_ju_{1-j}\\
           &=\frac{1}{m}\xi^{-2}x^{(3-2m)d+a}\overline{h}g^{m-1}(
           \sum_{j=0}^{m-1}\xi^{2j}\gamma^{-2j}\;{]{j-2}, {j-1}[}),
\end{align*} where Equation \eqref{r7} is used.
Thus $$(\Id*S)(u_1)=0 \Leftrightarrow
\sum_{j=0}^{m-1}\xi^{2j}\gamma^{-2j}\;{]{j-2}, {j-1}[}=0.$$
This forces $\xi^2=\gamma$ by Corollary \ref{ce3}.

Next, we will show $h=x^{\frac{3}{2}(1-m)d}$ by the equations
$$(S*\Id)(u_0)=(\Id*S)(u_0)=1.$$ Indeed,
\begin{align*}
(S*\Id)(u_0)&=\sum_{j=0}^{m-1}S(\gamma^{-j^2}u_j)x^{-jd}g^ju_{-j} \\
           &=\sum_{j=0}^{m-1}\gamma^{-j^2} (-1)^j \xi^{-j} \gamma^{-\frac{j(j+1)}{2}}x^{jd} h g^{m-j-1} u_jx^{-jd}g^ju_{-j} \\
           &=hg^{m-1}(\sum_{j=0}^{m-1}(-1)^j \xi^{-j}\gamma^{-\frac{j(j+1)}{2}}u_ju_{-j})\\
           &=hg^{m-1}(\sum_{j=0}^{m-1}(-1)^j \xi^{-j}\gamma^{-\frac{j(j+1)}{2}} (-1)^{j-m}\xi^{j-m}\gamma^{\frac{1}{2}(m-j)(m-j+1)}
             \frac{1}{m}x^a \;{]{j-1}, {j-1}[}\; g  )\\
           &=\frac{1}{m}x^a h g^m(\sum_{j=0}^{m-1}(-1)^{-m} \xi^{-m}\gamma^{\frac{m(m+1)}{2}-j}\;{]{j-1}, {j-1}[})\\
           &=\frac{1}{m}x^a h g^m(\sum_{j=0}^{m-1}\gamma^{-j}\;{]{j-1}, {j-1}[})\\
           &=x^{(2m-1)d+a}h \quad (\,\textrm{by Lemma}\ \ref{ce1}),\\
(\Id*S)(u_0)&=\sum_{j=0}^{m-1}\gamma^{-j^2}u_j\cdot S(x^{-jd}g^ju_{-j}) \\
           &=\sum_{j=0}^{m-1}\gamma^{-j^2}u_j\cdot S(u_{-j})S(g^j)x^{jd} \\
           &=\sum_{j=0}^{m-1}\gamma^{-j^2}u_j \cdot (-1)^{-j}\xi^{j}\gamma^{\frac{j(1-j)}{2}}x^{-jd}hg^{m+j-1}u_{-j}g^{-j}x^{jd}\\
           &=\sum_{j=0}^{m-1}\gamma^{-j^2}u_j \cdot (-1)^{-j}\xi^{j}\gamma^{\frac{j(1-j)}{2}+j^2}hg^{m-1}u_{-j}\\
           &=x^{(2-2m)d}\overline{h}g^{m-1}(\sum_{j=0}^{m-1}(-1)^{-j}\xi^{j}\gamma^{\frac{j(1-j)}{2}-j}u_ju_{-j})\\
           &=\frac{1}{m}x^{(2-m)d+a}\overline{h}(\sum_{j=0}^{m-1}\xi^{2j}\gamma^{-j}\;{]{j-1}, {j-1}[})\\
           &=x^{(2-m)d+a}\overline{h} \quad (\,\textrm{by the proof of Lemma}\ \ref{ce1}).
\end{align*}
So, $(S*\Id)(u_0)=(\Id*S)(u_0)=1$ implies
$h=x^{(1-2m)d-a}=x^{(2-m)d+a}$. Thus $h=x^{\frac{3}{2}(1-m)d}$ and
$a=-\frac{1+m}{2}d$.  Therefore, for $0\leqslant i\leqslant m-1$,
$$S(u_i)=(-1)^i \xi^{-i}\gamma^{-\frac{i(i+1)}{2}} x^{id+\frac{3}{2}(1-m)d}
g^{m-1-i}u_i.$$  \qed

From Claim 7, we find that $2|(1+m)d$ and we can improve Claim 6 as the following form:

\noindent \emph{Claim 6'.  For $0\leqslant i,j\leqslant m-1$, the multiplication between $u_i$ and $u_j$ satisfies that $$u_iu_j=(-1)^{-j}\xi^{-j}\gamma^{\frac{j(j+1)}{2}}\frac{1}{m}x^{-\frac{1+m}{2}d}\;{[i, m-2-j]}\;y^{\overline{i+j}}g$$
where $i+j$ is interpreted \emph{mod} $m$.}

We can prove Proposition \ref{p8.2} now. By Claims 1,2',3,4,5,6' and 7, we have a natural surjective Hopf homomorphism
$$f:\; D(m,d,\xi)\to H,\;x\mapsto x,\ y\mapsto y,\ g\mapsto g,\ u_{i}\mapsto u_{i}$$
for $0\leqslant i\leqslant m-1$. It is not hard to see that $f|_{D_{ij}}:\; D_{ij}\to H_{ij}$ is an isomorphism of
$k[x^{\pm 1}]$-modules for $0\leqslant i,j\leqslant 2m-1$. So $f$ is an isomorphism. \qed

We conclude this paper by giving the classification of  prime regular Hopf algebras of GK-dimension one.

\begin{theorem}\label{t8.3} Let $H$ be a prime regular Hopf algebra of GK-dimension one. Then it is isomorphic to one of the following:

\emph{(1)} the Hopf algebras listed in Subsection 2.3;

\emph{(2)} the Hopf algebras constructed in Subsection 4.1.
\end{theorem}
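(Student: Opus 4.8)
The plan is to obtain the classification as a case analysis on the pair of invariants $(\io(H),\im(H))$, invoking the structural results established in the preceding sections; at this stage the theorem is essentially a bookkeeping assembly of those results. First I would record the elementary constraint that $\im(H)$ divides $\io(H)$: since $G_\pi^l$ is cyclic of order $\io(H)$ and $G_\pi^l\cap G_\pi^r$ is a subgroup, Lagrange's theorem gives $\im(H)=|G_\pi^l/(G_\pi^l\cap G_\pi^r)|=\io(H)/|G_\pi^l\cap G_\pi^r|$, so $\im(H)\mid\io(H)$. This guarantees both that $t:=\io(H)/\im(H)$ is an integer whenever needed and that the cases below are genuinely exhaustive.

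The cases already covered by Brown--Zhang require no new work. If $\io(H)=\im(H)$ (whether equal to $1$ or to some $n>1$), or if $\io(H)>1$ while $\im(H)=1$, then Theorem \ref{t2.7} applies directly and yields that $H$ is isomorphic to one of $k[x]$, $k[x^{\pm1}]$, $k\mathbb{D}$, $H(n,0,\xi)$, $H(n,1,\xi)$, or $B(n,\omega,\gamma)$, all of which appear in Subsection 2.3. Hence conclusion (1) holds throughout this range.

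The only remaining, and the only genuinely new, case is $\io(H)=n>\im(H)=m>1$, which is exactly the standing hypothesis of Sections 5--8. Here I would set $t=n/m\geqslant 2$ and apply Proposition \ref{t3.9}, so that the Hopf subalgebra $\widetilde{H}$ is isomorphic either to the Taft algebra $H(m,1,\xi)$ (the primitive case) or to the generalized Liu algebra $B(m,\omega,\gamma)$ (the group-like case). In the primitive case, Theorem \ref{t4.10} shows that $H$ is an infinite dimensional Taft algebra, so conclusion (1) holds again. In the group-like case I would split according to $t$: Proposition \ref{p4.11} shows that no such $H$ exists when $t>2$, so that subcase is vacuous, whereas for $t=2$, i.e.\ $\io(H)=2\im(H)$, Proposition \ref{p8.2} identifies $H$ with some $D(m,d,\xi)$ from Subsection 4.1, which is conclusion (2).

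Assembling these cases covers every admissible pair $(\io(H),\im(H))$, so $H$ is isomorphic to one of the listed algebras, completing the proof. The substantive mathematical content lives entirely in the propositions cited above; the only point demanding care at this final stage is verifying exhaustiveness---that the divisibility $\im(H)\mid\io(H)$, together with the primitive/group-like dichotomy of Proposition \ref{t3.9} and the further split $t=2$ versus $t>2$, leaves no gap, and in particular that the vacuous group-like subcase $t>2$ from Proposition \ref{p4.11} does not hide an overlooked family.
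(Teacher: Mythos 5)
Your proof is correct and takes essentially the same approach as the paper: dispose of the cases satisfying condition $(\ast)$ via Theorem \ref{t2.7}, and in the remaining case $\io(H)>\im(H)>1$ combine Proposition \ref{t3.9}, Theorem \ref{t4.10}, Proposition \ref{p4.11} and Proposition \ref{p8.2} exactly as the paper does. Your explicit check that $\im(H)\mid\io(H)$ (so that $t=\io(H)/\im(H)$ is an integer and the case split is exhaustive) is left implicit in the paper, but this is a cosmetic difference, not a different argument.
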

\begin{proof} By Theorem \ref{t2.7}, we only need to consider the case $\io(H)>\im(H)>1$. In this case, $\widetilde{H}$
can be constructed. By Proposition \ref{t3.9}, $\widetilde{H}$ is either primitive or group-like. If $\widetilde{H}$ is primitive, then
$H$ is isomorphic to an infinite dimensional Taft algebra by Theorem \ref{t4.10}. If $\widetilde{H}$ is group-like,  owing to Proposition \ref{p4.11} there is no such $H$ satisfying $\frac{\io(H)}{\im(H)}>2$. Moreover, if $\frac{\io(H)}{\im(H)}=2$ then $H$ is one of the Hopf algebras constructed in Subsection 4.1 by Proposition \ref{p8.2}.
\end{proof}

\section*{Acknowledgment}
We would like thank the referee for his/her very valuable comments which improve the paper greatly.

\end{document}